 \def\ps@pprintTitle{%
 	\let\@oddhead\@empty
 	\let\@evenhead\@empty
 	\def\@oddfoot{\footnotesize\itshape
 		{} \hfill}%
 	\let\@evenfoot\@oddfoot
 }
\DeclareMathOperator{\Map}{Map}
\newtheorem{theor}{Theorem}[section]
\newtheorem*{theor*}{Theorem}
\newtheorem{prop}[theor]{Proposition}
\newtheorem{lemma}[theor]{Lemma}
\newtheorem{cor}[theor]{Corollary}
\newtheorem*{cor*}{Corollary}
\theoremstyle{definition}               
\newtheorem{defin}[theor]{Definition}
\newtheorem{ex}{Example}
\newtheorem{rem}[theor]{Remark}
\newtheorem*{que*}{Question}
\newtheorem*{conv*}{Convention}
\DeclareMathOperator{\Sym}{Sym}
\DeclareMathOperator{\End}{End}
\DeclareMathOperator{\id}{id}
\DeclareMathOperator{\Ann}{Ann}
\DeclareMathOperator{\im}{Im}
\DeclareMathOperator{\Soc}{Soc}
\DeclareMathOperator{\Fix}{Fix}
\DeclareMathOperator{\E}{E}
\DeclareMathOperator{\lcm}{lcm}
\newcommand{\phii}[2]{\phi_{#1,#2}}
\begin{document}

\begin{frontmatter}

\title{Solutions of the Yang-Baxter equation\\ and strong semilattices of skew braces}
	\tnotetext[mytitlenote]{This work was partially supported by the Dipartimento di Matematica e Fisica ``Ennio De Giorgi'' - Università del Salento. The authors are members of GNSAGA (INdAM) and the non-profit association ADV-AGTA.	
	}

\author{Francesco CATINO}
\ead{francesco.catino@unisalento.it}

\author{Marzia MAZZOTTA
}
\ead{marzia.mazzotta@unisalento.it}

\author{Paola STEFANELLI
}
\ead{paola.stefanelli@unisalento.it}


\address{Dipartimento di Matematica e Fisica ``Ennio De Giorgi", Universit\`a del Salento, \\ Via Provinciale Lecce-Arnesano, 73100 Lecce (Italy)}

\begin{abstract}
We prove that any set-theoretic solution of the Yang-Baxter equation associated to a dual weak brace is a strong semilattice of non-degenerate bijective solutions. 
This fact makes use of the description of any dual weak brace $S$  we provide in terms of strong semilattice $Y$ of skew braces $B_\alpha$, with $\alpha \in Y$.
Additionally, we describe the ideals of $S$ and study its nilpotency by correlating it to that of each skew brace $B_\alpha$.
\end{abstract}

\begin{keyword}
 Quantum Yang--Baxter equation \sep set-theoretic solution \sep Clifford semigroup \sep  weak brace \sep skew brace \sep brace
 \MSC[2020]  16T25\sep 81R50 \sep 16Y99 \sep 20M18
\end{keyword}
\end{frontmatter}

\section*{Introduction}

The quantum Yang–Baxter equation takes its name from two independent works by C.N. Yang \cite{Ya67} and R.J. Baxter \cite{Ba72}. It is an important tool in several fields of research, among these are statistical
mechanics, quantum field theory, and quantum group theory, whose study of solutions has been a major research area for the past 50 years. 
The challenging problem of determining all the set-theoretic solutions arose in 1992 in the paper by Drinfel'd \cite{Dr92} and is still open. Into the specific, given a set $S$, a map
$r:S\times S\to S\times S$ is said to be a \emph{set-theoretic solution of the Yang-Baxter equation}, or briefly a \emph{solution}, if $r$ satisfies the braid identity
\begin{align*}
\left(r\times\id_S\right)
\left(\id_S\times r\right)
\left(r\times\id_S\right)
= 
\left(\id_S\times r\right)
\left(r\times\id_S\right)
\left(\id_S\times r\right).
\end{align*}
Writing $r\left(x,y\right) = \left(\lambda_x\left(y\right), \rho_y\left(x\right)\right)$, with $\lambda_x, \rho_x$ maps from $S$ into itself,  then $r$ is \emph{left non-degenerate} if $\lambda_x\in \Sym_S$, \emph{right non-degenerate} if $\rho_x\in \Sym_S$,  for every $x\in S$, and \emph{non-degenerate} if $r$ is both left and right non-degenerate.\\
Several techniques for constructing solutions starting from known solutions have been introduced over the years. For the purposes of this paper, we mention the \emph{strong semilattice of solutions} \cite{CCoSt20x-2}, which is a method that allows for determining solutions starting from a semilattice $Y$, a family of disjoint sets $\left\{X_{\alpha}\ |\ \alpha \in Y\right\}$, and solutions $r_\alpha$ defined on these sets. \\
In 2007,  Rump \cite{Ru07} innovatively showed how an involutive non-degenerate solution can be obtained starting from the special algebraic structure of \emph{brace}.  This type of approach 
had a large following in the last few years and other similar structures have been studied. Among these, we mention the \emph{weak brace} \cite{CaMaMiSt22}
that is a triple $\left(S,+,\circ\right)$ such that $\left(S,+\right)$ and $\left(S,\circ\right)$ are inverse semigroups satisfying
\begin{align*}
   a\circ\left(b + c\right)
    = a\circ b - a + a\circ c 
    \qquad \text{and} \qquad 
    a\circ a^- = -a + a,
\end{align*}
for all $a,b,c\in S$, where $-a$ and $a^-$ denote the inverses of $a$ with respect to $+$ and $\circ$, respectively. Clearly, the sets of the idempotents $\E(S,+)$ and $\E(S, \circ)$ coincide, so we will simply write $\E(S)$. In particular, if $|\E(S)|=1$, then $(S, +)$ and $(S, \circ)$ are groups having the same identity, and so $(S, +, \circ)$ is a \emph{skew brace} \cite{GuVe17} which is a brace if the group $(S,+ )$ is abelian.
Necessarily, the additive structure  is a Clifford semigroup, instead, in general, the multiplicative one is not. A class of weak braces having $(S, \circ)$ as a Clifford semigroup is obtained in \cite[Theorem 16]{CaMaSt23}. We call \emph{dual} the weak braces for which $(S,\circ)$ is Clifford.\\
Any weak brace $(S, +, \circ)$ gives rise to a solution $r:S\times S\to S\times S$ defined by
$$r\left(a,b\right)
    = \left(-a + a\circ b, \ \left(-a + a\circ b\right)^-\circ a\circ b\right),$$ 
for all $a,b\in S$ (see \cite[Theorem 11]{CaMaMiSt22}). 
Such a map $r$ is close to being bijective, and, in the case of a dual one, $r$ is also close to being non-degenerate (see \cite[pp. 604-605]{CaMaSt23}). Besides, a new family of solutions coming from dual weak braces has been investigated in \cite{MaRySt23x}.
For this kind of structure, a notion of \emph{ideal} has been also introduced in \cite[Definition 20]{CaMaSt23}. Moreover, it turns out that the quotient structure is a new dual weak brace with semilattice of idempotents isomorphic to $\E\left(S\right)$.

In this paper, we entirely describe the structure of a dual weak brace $(S, +, \circ)$,  by showing that it is a strong semilattice $Y$ of skew braces $\left(B_{\alpha}, +, \circ\right)$, for every $\alpha \in Y$, where $\left(B_\alpha, +\right)$ and $\left(B_\alpha, \circ\right)$ are the groups fulfilling the structure of $\left(S, +\right)$ and $\left(S, \circ\right)$, respectively, as Clifford semigroups (see \cite[Theorem 4.2.1]{Ho95}). As a consequence, we prove that the solution $r$ associated to $S$ is the strong semilattice $Y$ of the non-degenerate bijective solutions $r_\alpha$ on each $B_\alpha$.
Any strong semilattice of skew braces is realized by combining skew brace homomorphisms $\phi_{\alpha,\beta}$ from $B_\alpha$ to $B_\beta$ (whenever $\alpha\geq \beta$), thus this further motivates the study of such maps, a problem already emerged in literature (cf. \cite[Problem 10.2]{Ce18}, \cite[Problem 2.18]{Ve19}, \cite{Ze19}, \cite{PuSmZe22}, and \cite{RaYa24}).\\
Despite the obtained description, the skew brace theory is not exhaustive for developing the theory of dual weak braces.  In fact, for instance, although we show the ideals of any dual weak brace $S$ are specific strong semilattices of ideals of every skew brace $B_\alpha$, if we consider known ideals, such as its socle $\Soc(S)$, in general, it is not the strong semilattice $Y$ of each $\Soc\left(B_\alpha\right)$. This led us to deepen the theory of dual weak braces and not just reduce it to the study of every skew brace $B_\alpha$. As a first step, we introduce the binary operation $\cdot$ on $S$ given by $a\cdot b:= -a + a\circ b -b$, for all $a,b\in S$, classically known in the context of radical Jacobson rings. We give some properties that are useful to characterize the ideals of $S$ in terms of the operation $\cdot$. Furthermore, this has allowed us to investigate the right nilpotency and the nilpotency of $S$ by relating them with those of each skew braces $B_\alpha$. We highlight that nilpotency in skew braces has been intensively studied over the years by many authors (see, for instance, \cite{BaEsJiPe23x,BaGu21,BoJed21x,BoFaPo23,CaCeSt22,CeSmVe19,JeVAnVen22,Sm18}) and it is still under investigation above all concerning multipermutation solutions \cite{ESS99}.

\bigskip

\section{Basics on weak braces }

This section aims to give actual results on the structures of weak braces \cite{CaMaMiSt22} paying particular attention to the behavior of the idempotents.
\medskip

To make this paper self-contained and to set up the notation, throughout the paper where it will be needed, we will recall some notions contained in classical books on inverse semigroups, as \cite{ClPr61,Ho95,La98,Pe84}.  
A semigroup $S$ is \emph{inverse} if 
for each $a\in S$, there exists a unique $a^{-1}\in S$ such that $a=aa^{-1}a$ and $a^{-1}=a^{-1}aa^{-1}$. We call such an element $a^{-1}$ the \emph{inverse} of $a$. 
 The class of inverse semigroups is very close to that of groups since  $(a b)^{-1}=b^{-1} a^{-1}$ and $(a^{-1})^{-1}=a$, for all $a,b \in S$. 
Note that $aa^{-1}$ and $a^{-1}a$ are the idempotents of $S$, for every $a \in S$. An inverse semigroup $S$ is called \emph{Clifford} if its idempotents are central, or, equivalently, $aa^{-1}=a^{-1}a$, for every $a \in S$.

\begin{defin} \cite[Definition 5]{CaMaMiSt22}
    Let $S$ be a set endowed with two operations $+$ and $\circ$ such that $\left(S,+\right)$ and $\left(S,\circ\right)$ are inverse semigroups. Then, $(S, +, \circ)$ is a \emph{weak brace} if
 \begin{align*}
    a\circ\left(b + c\right)
    = a\circ b -a + a\circ c\qquad \text{and}\qquad a\circ a^-=-a+a,
 \end{align*}
for all $a,b,c\in S$, where $-a$ and $a^-$ denote the inverses of $a$ with respect to $+$ and $\circ$, respectively.
\end{defin}

\noindent Clearly, the sets of the  idempotents $E(S,+)$ and $E(S,\circ)$ coincide, thus we will simply denote such a set by $E(S)$. Obviously, if $|\E(S)|=1$, then $(S,+, \circ)$ is a skew brace \cite{GuVe17}. \\
In \cite[Theorem 8]{CaMaMiSt22} it is proved that the additive semigroup of any weak brace is necessarily Clifford. In general, the multiplicative one is not (see  \cite[Example 3]{CaMaMiSt22}). Any Clifford semigroup $\left(S, \circ\right)$ determines two trivial weak braces having both Clifford structures,  by setting $a + b:= a\circ b$ or $a + b:= b\circ a$, for all $a,b\in S$. A bigger class of weak braces having $(S, \circ)$ Cifford is studied in \cite[Theorem 16]{CaMaSt23}.

\begin{defin}\cite[Definition 2]{CaMaSt23}
    A weak brace $(S, +, \circ)$ is called \emph{dual} if $(S, \circ)$ is Clifford.
\end{defin}

 \medskip

Given a weak brace $(S,+, \circ)$, let $\lambda: S\to \End(S,+), \,a\mapsto\lambda_a$ and $\rho: S\to \Map(S), \,b\mapsto\rho_b$ be the maps defined by 
\begin{align*}
   \lambda_a\left(b\right) = -a+a\circ b\qquad \text{and} \qquad \rho_b\left(a\right) = \left(-a+a\circ b\right)^{-}\circ a \circ b,
\end{align*}
for all $a,b\in S$, respectively. One has  that $\lambda_a(b)=a\circ \left(a^- +b \right)$, for all $a,b \in S$, and $\lambda_a(E(S)) \subseteq E(S)$. Besides, the map $\lambda$ is a homomorphism of $\left(S,\circ\right)$ into the endomorphism semigroup of $\left(S,+\right)$ and the map $\rho$  is an anti-homomorphism of  $\left(S,\circ\right)$ into the monoid $\Map(S)$. Following \cite[Theorem 11]{CaMaMiSt22}, the map $r:S\times S\to S\times S$ defined by $r\left(a,b\right)
    = \left(\lambda_a(b), \ \rho_b(a)\right)$, 
for all $a,b\in S$, is a solution that has a behaviour close to bijectivity: indeed, considered the solution $r^{op}$ associated to the opposite weak brace
$S^{op}:=(S, +^{op}, \circ)$ of $S$ they hold
  \begin{align*}
      r\, r^{op}\, r = r, \qquad
      r^{op}\, r\, r^{op} = r^{op}, \qquad \text{and}\qquad rr^{op} = r^{op}r.
  \end{align*}
In addition, if $S$ is dual, $r$ has also a behaviour close to the non-degeneracy since
\begin{align*}
    \lambda_a\lambda_{a^-}\lambda_a
    = \lambda_{a},
    \qquad   \lambda_{a^-}\lambda_{a}\lambda_{a^-}
    = \lambda_{a^-}, \quad &\text{and} \qquad
    \lambda_a\lambda_{a^-}
    =\lambda_{a^-}\lambda_{a}, \\  \rho_{a}\rho_{a^-}\rho_{a}
    = \rho_a,
    \qquad
    \rho_{a^-}\rho_{a}\rho_{a^-} = \rho_{a^-},
    \qquad &\text{and} \qquad
    \rho_{a}\rho_{a^-}
    = \rho_{a^-}\rho_{a},
\end{align*}
for every $a \in S$.  Clearly, if $S$ is a skew brace, $r$ is non-degenerate and bijective with  $r^{-1} = r^{op}$ \cite{KoTr20}. Moreover, $S$ is a brace if and only if $r$ is involutive.

In the lemma below, we collect some useful properties of weak braces provided in \cite{CaMaMiSt22,CaMaSt23}. 

\begin{lemma}\label{prop_weak}
Let $(S,+, \circ)$ be a weak brace. Then, the following hold:
\begin{enumerate}
    \item $\lambda_a\left(a^-\right) = - a$,
    \item $a \circ \left(-b\right)=a-a\circ b+a$,
    \item $a\circ b= a + \lambda_a\left(b\right)$,
    \item $a+b=a \circ \lambda_{a^-}\left(b\right),$
\end{enumerate}
for all $a,b \in S$.
\end{lemma}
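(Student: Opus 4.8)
My plan is to derive the four identities by direct computation inside $S$, using only the two defining axioms of a weak brace together with the facts already recalled: that $(S,+)$ is a Clifford semigroup (hence its idempotents are central and satisfy $x+(-x)+x=x$), that $\lambda_a(b)=-a+a\circ b=a\circ(a^-+b)$, that $\lambda\colon(S,\circ)\to\End(S,+)$ is a homomorphism, and that a homomorphism of inverse semigroups preserves inverses. The recurring device is the idempotent $a-a=-a+a=a\circ a^-$ of $\E(S)$: the three expressions agree by Clifford-ness of $(S,+)$ and by the second axiom, this element is the identity of the maximal subgroup of $(S,+)$ through $a$, and it may be moved at will by centrality and absorbed into (or produced from) any element whose associated idempotent lies below it. This is what replaces additive cancellation, unavailable here since $(S,+)$ is only a semigroup.

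I would start with (1), which is self-contained: putting $b=a^-$ in $\lambda_a(b)=-a+a\circ b$ and using the axiom $a\circ a^-=-a+a$ gives $\lambda_a(a^-)=-a+(-a+a)$, and commuting the central idempotent $-a+a$ past the leading $-a$ followed by the inverse-semigroup identity $(-a)+a+(-a)=-a$ gives $\lambda_a(a^-)=-a$. For (3): adding $a$ on the left to $\lambda_a(b)=-a+a\circ b$ gives $a+\lambda_a(b)=(a-a)+a\circ b=(a\circ a^-)+(a\circ b)$, so one has to absorb the idempotent $a\circ a^-$ into $a\circ b$; since $a\circ b=a\circ a^-\circ a\circ b$, the idempotent of $a\circ b$ lies below $a\circ a^-$ in $\E(S)$, and the absorption goes through. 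Identity (2) then follows from (3): using that $\lambda_a$ preserves inverses and that $-(x+y)=(-y)+(-x)$, one gets $a\circ(-b)=a+\lambda_a(-b)=a-\lambda_a(b)=a+\bigl(-(-a+a\circ b)\bigr)=a+\bigl(-(a\circ b)+a\bigr)=a-a\circ b+a$. Finally (4): applying $\lambda_x(y)=x\circ(x^-+y)$ with $x=a^-$ gives $\lambda_{a^-}(b)=a^-\circ(a+b)$, hence $a\circ\lambda_{a^-}(b)=a\circ a^-\circ(a+b)=(a\circ a^-)\circ(a+b)=a+b$, the last step absorbing the idempotent $a\circ a^-$ into $a+b$, whose associated idempotent again lies below it.

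The only delicate point — and hence the main obstacle — is exactly this idempotent book-keeping. Since $(S,+)$ is a Clifford semigroup and not a group, every absorption step rests on knowing that the idempotent $x-x$ being removed dominates, in $\E(S)$, the idempotent of the element it is attached to; and in (3) and (4) this domination, which is transparent on the multiplicative side, has to be transferred to the additive side (or vice versa). Making this transfer precise amounts to the fact that $(S,+)$ and $(S,\circ)$ induce the same natural partial order on $\E(S)$; this belongs to the basic theory of weak braces and is available from \cite{CaMaMiSt22,CaMaSt23}, from which the lemma is quoted. Granting it, the four identities follow from the displayed manipulations.
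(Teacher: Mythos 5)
Your computations are correct, but note that the paper itself contains no proof of this lemma: it is explicitly presented as a collection of properties quoted from \cite{CaMaMiSt22,CaMaSt23}, so the only meaningful comparison is with the axioms themselves. Your derivation of (1) is immediate; (3) reduces to the absorption $(a\circ a^-)+(a\circ b)=a\circ b$; (2) then follows from (3) since $\lambda_a\in\End(S,+)$ preserves inverses; and (4) reduces to $(a\circ a^-)\circ(a+b)=a+b$. You correctly isolate the one nontrivial point: each absorption requires knowing that the idempotent being removed dominates the idempotent of the element it is attached to \emph{with respect to the right operation}, i.e.\ that the natural partial orders induced on $\E(S)$ by $+$ and by $\circ$ coincide. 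This is a genuine hypothesis you are importing, and deferring it to \cite{CaMaMiSt22} is slightly delicate, since that is exactly where the lemma itself is proved; a reader could worry about circularity. The worry is, however, easily dispelled: the stronger fact $e+f=e\circ f$ for $e,f\in\E(S)$ follows directly from the axioms. Indeed, left distributivity with $-e=e$ gives $e\circ f=e\circ f+e+e\circ f=e\circ f+e$, whence $g\circ(x+y)=g\circ x+g\circ y$ for idempotents; applying this to $(e+f)\circ(e+f)=e+f$ and using that $\circ$-idempotents commute yields $e+f=e\circ f+e\circ f=e\circ f$. With that observation inserted, your proof is self-contained and complete: the required dominations $(a\circ b)\circ(a\circ b)^-\le a\circ a^-$ and $(a+b)-(a+b)=a^0+b^0\le a^0$ are then transparent on whichever side they are easiest to check, and transfer freely.
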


\medskip

The following key lemma highlights the behavior of idempotents in any weak brace.
\begin{lemma}
\label{pro_dual_idemp}
If $(S,+,\circ)$ is a weak brace and $e\in\E(S)$, then  $\rho_e\left(a\right)=a \circ e$ and
\begin{align}\label{eq:idem+circ}
    \lambda_e(a)=e \circ a=e+a,
\end{align}
for every $a\in S$.\end{lemma}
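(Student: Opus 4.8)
My plan is to derive the displayed identity \eqref{eq:idem+circ} first, directly from the defining axioms together with items (3) and (4) of \cref{prop_weak}, and then to obtain $\rho_e(a)=a\circ e$ as a corollary. The only ambient facts I will use are that an idempotent $e$ is its own inverse for both operations (so $-e=e$, $e^-=e$, $e\circ e=e=e+e$), that $\lambda_a$ preserves $\E(S)$, and that $(S,+)$ is Clifford (so its idempotents are central in $(S,+)$).

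For \eqref{eq:idem+circ} I would argue in two short moves. First, expand $\lambda_e(a)=-e+e\circ a=e+e\circ a$, and apply item (3) of \cref{prop_weak} in the form $e\circ a=e+\lambda_e(a)$; substituting gives $\lambda_e(a)=e+e+\lambda_e(a)=e+\lambda_e(a)=e\circ a$, so $\lambda_e(a)=e\circ a$. Second, apply item (4) of \cref{prop_weak} with $e$ in the role of $a$ (using $e^-=e$) and then the equality just obtained: $e+a=e\circ\lambda_e(a)=e\circ(e\circ a)=(e\circ e)\circ a=e\circ a$. Together these give $\lambda_e(a)=e\circ a=e+a$.

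For $\rho_e(a)=a\circ e$, set $f:=\lambda_a(e)$, which is an idempotent since $\lambda_a(\E(S))\subseteq\E(S)$; hence $f^-=f$ and, by definition of $\rho$, $\rho_e(a)=\lambda_a(e)^-\circ a\circ e=f\circ a\circ e$. By item (3) of \cref{prop_weak} we have $a\circ e=a+\lambda_a(e)=a+f$, so $\rho_e(a)=f\circ(a+f)$; expanding with the first weak-brace axiom and absorbing $-f=f$, $f\circ f=f$, $f+f=f$ collapses this to $f\circ a+f$. Now \eqref{eq:idem+circ} applied to the idempotent $f$ gives $f\circ a=f+a$, and since $f$ is central in $(S,+)$ we get $\rho_e(a)=f+a+f=a+f=a\circ e$, the final equality being item (3) of \cref{prop_weak} once more.

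I do not expect a real obstacle here: every step is a one-line manipulation, and the only thing to be careful about is the bookkeeping of which idempotent absorbs which term. The one point genuinely worth flagging is the observation that $\lambda_a(e)$ is itself idempotent — this is exactly what makes the expression $\lambda_a(e)^-\circ a\circ e$ tractable, since it simultaneously trivialises the $\circ$-inverse and lets $\lambda_a(e)$ be moved freely through $+$.
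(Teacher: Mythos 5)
Your proof is correct. For the displayed identity $\lambda_e(a)=e\circ a=e+a$ you follow essentially the same path as the paper: both arguments chain \cref{prop_weak}-$3.$ and \cref{prop_weak}-$4.$ with the absorptions $-e=e$ and $e+e=e=e\circ e$, so there is nothing to compare there. Where you genuinely diverge is in establishing $\rho_e(a)=a\circ e$. The paper does not compute $\rho_e(a)$ directly; it invokes an external formula from \cite{CaMaMiSt22} for the $\circ$-inverse, namely $\rho_e(a)^-=e\circ a^- -e=\lambda_e(a^-)=e\circ a^-=(a\circ e)^-$, and then concludes by uniqueness of inverses in $(S,\circ)$. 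You instead unfold the definition $\rho_e(a)=\lambda_a(e)^-\circ a\circ e$, observe that $f:=\lambda_a(e)$ is idempotent --- so $f^-=f$ and $f$ is central in the Clifford semigroup $(S,+)$ --- and reduce everything to $a+f=a\circ e$ via the left distributivity axiom and the already-proved equality \eqref{eq:idem+circ} applied to $f$. Your route is self-contained within the stated prerequisites, replacing the citation by a short direct computation at the cost of a few extra lines, while the paper's route is shorter on the page but leans on the earlier reference; both arguments are valid for arbitrary (not necessarily dual) weak braces, as the statement requires, since they only use the Cliffordness of $(S,+)$ and the $\lambda$-invariance of $\E(S)$, which are established beforehand.
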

\begin{proof}
Let $a \in S$. Then, by \cref{prop_weak}-$3.$, $e \circ a=e +\lambda_e\left(a\right)=e-e+e\circ a=\lambda_e\left(a\right)$. 
 As a consequence,  we obtain
$\lambda_e(a)
= e \circ e \circ \left(e+ a\right)
= e \circ \lambda_e(a)
= e + a,
$ where the last equality follows from \cref{prop_weak}-$4.$. In addition, by \cite[Proposition 9-$3.$]{CaMaMiSt22}, $\rho_e\left(a\right)^- = e \circ a^--e=\lambda_e\left(a^-\right)=e \circ a^-$, hence
$\rho_e\left(a\right) = a\circ e$,  thus the claim is satisfied. 
\end{proof}

In light of \cref{pro_dual_idemp}, in any dual weak brace $S$ the set of the idempotents $\E(S)$ gives rise to a structure of trivial weak sub-brace of $S$, the general definition  of which is given below.
\begin{defin}
Let $(S,+, \circ)$ be a weak brace and $H\subseteq S$. Then, $H$ is said to be a \emph{weak sub-brace} of $(S, +, \circ)$ if  $H$ both is an inverse semigroup of $(S,+)$ and $(S,\circ)$ such that $\E(S)\subseteq H$.
\end{defin}

\medskip

\noindent \textbf{Convention.} From now on,  we will only deal with dual weak braces and briefly write $a^0$  to denote the idempotent $a-a=a\circ a^-$, for every $a \in S$.

\medskip

In the last part of this section, we introduce a new operation on a dual weak brace $(S, +, \circ)$
that will allow characterizing its ideals. Such an operation is the usual multiplication in the context of rings and is already known for skew braces \cite{CeSmVe19,KoSmVe21} and cancellative semi-braces  \cite{CaCeSt22}. Specifically, we define the operation $\cdot$ on $S$ given by
  \begin{align*}
      a\cdot b:= -a + a\circ b - b,
  \end{align*}
  for all $a,b\in S$, that can be also rewritten as $a\cdot b = \lambda_a\left(b\right) - b$. As it is usual in brace theory (cf. \cite[Definition 2]{Ru07}), by \eqref{eq:idem+circ}, one has that 
\begin{align}\label{eq:lambda-cdot}
    \lambda_a\left(b\right) =  - a + a\circ b+ b^0
    =  a\cdot b + b,
\end{align}
for all $a,b\in S$. 

\begin{lemma}\label{idemp.puntino}
Let $(S, +, \circ)$ be a dual weak brace and $a,b \in S$. Then, the following are equivalent:
\begin{enumerate}
    \item[(i)] $a \cdot b \in \E(S)$,
    \item[(ii)] $a\cdot b=a^0+b^0$,
    \item[(iii)] $a+b=a \circ b$.
\end{enumerate}\end{lemma}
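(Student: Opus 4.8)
The plan is to prove the cyclic chain $(i)\Rightarrow(ii)\Rightarrow(iii)\Rightarrow(i)$; its computational core is the identity
\[
(a\cdot b)^0=a^0+b^0\qquad\text{for all }a,b\in S .
\]
Granting this, $(i)\Rightarrow(ii)$ is immediate, because an idempotent of $S$ coincides with its own $0$-part: if $a\cdot b\in\E(S)$, then $a\cdot b=(a\cdot b)^0=a^0+b^0$.

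To establish the displayed identity I would use that in a Clifford inverse semigroup the map $x\mapsto x^0$ is a homomorphism onto the semilattice of idempotents, so that $(x+y)^0=x^0+y^0$ in $(S,+)$ and, $S$ being dual, $(x\circ y)^0=x^0\circ y^0$ in $(S,\circ)$. Writing $a\cdot b=\lambda_a(b)-b$ and $\lambda_a(b)=-a+a\circ b$, this gives
\[
(a\cdot b)^0=\lambda_a(b)^0+b^0=a^0+(a\circ b)^0+b^0=a^0+(a^0\circ b^0)+b^0 ,
\]
and $a^0\circ b^0=a^0+b^0$ by \eqref{eq:idem+circ}; simplifying with $a^0+a^0=a^0$ and $b^0+b^0=b^0$ yields the claim. (Alternatively: $\lambda_a\in\End(S,+)$ preserves inverses, hence $\lambda_a(b)^0=\lambda_a(b^0)=-a+a\circ b^0=-a+b^0\circ a=-a+b^0+a=a^0+b^0$, using \cref{pro_dual_idemp} and the centrality of idempotents.)

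The remaining two implications are short rearrangements. For $(ii)\Rightarrow(iii)$, \cref{prop_weak}-$3.$ and \eqref{eq:lambda-cdot} give $a\circ b=a+\lambda_a(b)=a+(a\cdot b)+b=a+a^0+b^0+b=a+b$, using $a+a^0=a$ and $b^0+b=b$. For $(iii)\Rightarrow(i)$, substituting $a\circ b=a+b$ into the definition of $\cdot$ and regrouping gives $a\cdot b=-a+(a+b)-b=(-a+a)+(b-b)=a^0+b^0\in\E(S)$, since $\E(S)$ is a subsemilattice; this also proves $(iii)\Rightarrow(ii)$.

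I expect the only genuine obstacle to be the identity $(a\cdot b)^0=a^0+b^0$; everything else is bookkeeping. The point demanding care is the order of the additions: $(S,+)$ need not be commutative --- only its idempotents are central --- so each regrouping above must be justified either by associativity or by sliding an idempotent past the remaining terms. The dual hypothesis enters precisely in evaluating $(a\circ b)^0$ (or $a\circ b^0$) inside the proof of that identity.
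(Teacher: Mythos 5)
Your proposal is correct and follows essentially the same route as the paper: the cyclic chain $(i)\Rightarrow(ii)\Rightarrow(iii)\Rightarrow(i)$, with $(i)\Rightarrow(ii)$ reduced to the identity $(a\cdot b)^0=a^0+b^0$, which the paper obtains by expanding $a\cdot b-a\cdot b$ directly while you package the same computation via the fact that $x\mapsto x^0$ is a homomorphism onto the idempotents of a Clifford semigroup. The remaining implications coincide with the paper's verbatim, and your cautionary remarks about the non-commutativity of $(S,+)$ and where duality enters are well placed.
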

\begin{proof}
If $a\cdot b\in \E(S)$, then we have that
$$a\cdot b
    = a\cdot b - a\cdot b
    = - a + a\circ b + b^0 - a\circ b +a
    = a^0 + b^0 + \left(a\circ b\right)^0
    \underset{\eqref{eq:idem+circ}}{=} a^0 + b^0.$$
Now, if $a\cdot b = a^0 + b^0$, then
$a + b = a + a^0 + b^0 + b =  a + a\cdot b + b \underset{\eqref{eq:idem+circ}}{=}a\circ b$.
Finally, if $a+b=a \circ b$, we obtain that
$a\cdot b = -a + a\circ b - b = -a + a + b - b = a^0 + b^0\in \E(S)$, which completes the proof.
\end{proof}

\medskip  

Using \cref{idemp.puntino} and \eqref{eq:idem+circ}, we immediately obtain that, for all $a\in S$ and $e\in E\left(S\right)$,
\begin{align}\label{idemp_punt}
    a \cdot e = e \cdot a \in \E(S).
\end{align}

\begin{prop}\label{prop:cdot-circ}
Let $(S, +, \circ)$ be a dual weak brace.  Then, the following are satisfied:
\begin{enumerate}
\item $a \circ b=a+a\cdot b+b$
\item \label{ref_puntino2} $a\cdot \left(b + c\right) = a\cdot b + b + a\cdot c - b$,
\item \label{ref_puntino3} $\left(a\circ b\right)\cdot c = a\cdot\left(b\cdot c\right) + b\cdot c + a\cdot c$,
\end{enumerate}
for all $a,b,c\in S$.\end{prop}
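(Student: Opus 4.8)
The plan is to verify each of the three identities by direct computation, reducing everything to the definition $a\cdot b=-a+a\circ b-b$ and the two defining axioms of a weak brace, together with the consequences collected in \cref{prop_weak} and, crucially, \eqref{eq:idem+circ} and \eqref{eq:lambda-cdot}. Throughout I would work in the additive Clifford semigroup $(S,+)$, using freely that idempotents are central there.

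For item $1$, rearrange the definition: from $a\cdot b=-a+a\circ b-b$ one gets $a+(a\cdot b)+b=a+(-a+a\circ b-b)+b=a^0+a\circ b+b^0$. So the content of $1$ is exactly that $a^0+a\circ b+b^0=a\circ b$, i.e.\ that the idempotents $a^0$ and $b^0$ are absorbed by $a\circ b$. This follows because $a^0=a\circ a^-\in\E(S)$ lies below $a\circ b$ in the natural partial order (equivalently $a^0\circ(a\circ b)=a\circ a^-\circ a\circ b=a\circ b$), and similarly $b^0=b^-\circ b$ is absorbed on the right; since on idempotents $+$ and $\circ$ agree by \eqref{eq:idem+circ}, absorption for $\circ$ gives absorption for $+$. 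Alternatively one can cite that $(a\circ b)^0=a^0+b^0$ and invoke \eqref{eq:idem+circ} directly, which is the cleanest route. For item $2$, start from $a\cdot(b+c)=\lambda_a(b+c)-(b+c)$ by \eqref{eq:lambda-cdot}, use that $\lambda_a$ is an endomorphism of $(S,+)$ so $\lambda_a(b+c)=\lambda_a(b)+\lambda_a(c)$, substitute $\lambda_a(b)=a\cdot b+b$ and $\lambda_a(c)=a\cdot c+c$ again via \eqref{eq:lambda-cdot}, and then use centrality of the idempotent $c^0$ to move $-c$ past $\lambda_a(c)$: explicitly $a\cdot b+b+a\cdot c+c-c-b=a\cdot b+b+a\cdot c+c^0-b=a\cdot b+b+a\cdot c-b$, the last step absorbing $c^0$ into the already-present factors.

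For item $3$, the natural approach is to expand both sides using $\lambda$. Write $(a\circ b)\cdot c=\lambda_{a\circ b}(c)-c=\lambda_a\lambda_b(c)-c$, using that $\lambda$ is a homomorphism $(S,\circ)\to\End(S,+)$. Now $\lambda_b(c)=b\cdot c+c$ by \eqref{eq:lambda-cdot}, so $\lambda_a\lambda_b(c)=\lambda_a(b\cdot c+c)=\lambda_a(b\cdot c)+\lambda_a(c)=\big((a\cdot(b\cdot c))+(b\cdot c)\big)+\big((a\cdot c)+c\big)$, again by $\lambda_a$ additive and \eqref{eq:lambda-cdot} applied twice. Subtracting $c$ on the right and using centrality of $c^0$ to absorb the trailing $+c-c=c^0$ yields $(a\circ b)\cdot c=a\cdot(b\cdot c)+(b\cdot c)+a\cdot c$, as desired.

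The only genuinely delicate point is the bookkeeping with idempotents: in an inverse (Clifford) semigroup $+$ is not cancellative, so every rearrangement must be justified either by commuting an idempotent to where it gets absorbed, or by an explicit appeal to \eqref{eq:idem+circ} and the absorption relations $a^0+a=a=a+a^0$ and $(a+b)^0=a^0+b^0$. I would state these absorption facts once at the start of the proof (they are immediate from the Clifford property and \cref{pro_dual_idemp}) and then apply them silently. With that lemma in hand, all three identities are short manipulations, and $3$ is the one requiring the most care because two nested applications of \eqref{eq:lambda-cdot} produce several stray idempotent terms that must all be shown to vanish.
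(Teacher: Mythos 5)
Your proposal is correct, and for item~1 it coincides with the paper's argument (absorb $a^0$ and $b^0$ into $a\circ b$ via \eqref{eq:idem+circ}). For items~2 and~3 you take a genuinely different, though equally valid, route: the paper expands everything from the raw definition, using the weak-brace distributivity axiom directly for item~2 and, for item~3, starting from the \emph{right-hand side} and reducing it with two applications of \cref{prop_weak}-$2.$ (the identity $a\circ(-b)=a-a\circ b+a$); you instead organize both computations around the $\lambda$-formalism, invoking that $\lambda_a\in\End(S,+)$, that $\lambda_{a\circ b}=\lambda_a\lambda_b$, and the identity $\lambda_x(y)=x\cdot y+y$ from \eqref{eq:lambda-cdot}, working from the left-hand side. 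Your route buys a more conceptual bookkeeping: the only residual terms are stray idempotents such as $c^0$, and these are absorbed because $(a\cdot c)^0=a^0+c^0\geq c^0$ in the natural order, a point you correctly flag as the one delicate step (the non-cancellativity of $+$). The paper's route avoids naming $\lambda$ at all in item~2 and keeps item~3 as a chain of rewritings in $(S,\circ)$, at the cost of the slightly opaque intermediate step $-a\circ b+a\circ b\circ c\circ a^0\circ c^0-c=-a\circ b+a\circ b\circ c-c$, which is the same idempotent-absorption phenomenon in multiplicative disguise. Both proofs are complete; yours is arguably easier to audit because each equality is justified by a single named property of $\lambda$.
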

\begin{proof}
Let $a, b, c \in S$. Firstly, by \eqref{eq:idem+circ}, we have
$a \circ b= a^0+a \circ b + b^0=a + a\cdot b + b.
$
Moreover, $a\cdot \left(b + c\right)=-a+a\circ b+b^0-a+a\circ c-c-b=a\cdot b + b + a\cdot c - b.$
Finally, using \eqref{eq:idem+circ}, we get
\begin{align*}
 a&\cdot\left(b\cdot c\right) + b\cdot c + a\cdot c\\
 &=-a+a\circ \left(b \cdot c\right)+a \cdot c \\
 &=-a\circ b+a\circ (b \circ c-c)+a \cdot c &\mbox{by \cref{prop_weak}-$2.$}\\
 &=-a\circ b+a\circ b \circ c-a\circ c+a\circ c-c &\mbox{by \cref{prop_weak}-$2.$}\\
  &=-a \circ b + a \circ b \circ c \circ a^0 \circ c^0-c \\
    &=-a\circ b+a \circ b \circ c-c \\
 &=\left(a \circ b\right)\cdot c,
\end{align*}
which completes our claim.
\end{proof}

\medskip

\begin{cor}
Let $(S, +, \circ)$ be a dual weak brace. Then, they hold:
\begin{enumerate}
    \item $ a\cdot\left(b + e\right)=a\cdot b + a\cdot e    $,
    \item $\left(e + a\right)\cdot b=e\cdot b + a\cdot b$,
\end{enumerate}
for all $a,b \in S$ and $e \in E(S)$.
\end{cor}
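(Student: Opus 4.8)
The plan is to derive both identities as direct specializations of \cref{prop:cdot-circ}, using the fact that idempotents behave trivially under $+$ and under the $\lambda$-action. For item~1, I would start from \cref{prop:cdot-circ}-\ref{ref_puntino2}, which gives $a\cdot(b+e) = a\cdot b + b + a\cdot e - b$. Here the key observation is that $a\cdot e \in \E(S)$ by \eqref{idemp_punt}, so it is a central idempotent of the Clifford semigroup $(S,+)$; hence it commutes additively with $b$, and $b + a\cdot e - b = a\cdot e + b - b = a\cdot e + b^0$. This turns the right-hand side into $a\cdot b + a\cdot e + b^0$. To absorb the stray $b^0$, note that $a\cdot b$ already ``contains'' $b^0$ additively: from \eqref{eq:lambda-cdot} we have $a\cdot b = \lambda_a(b) - b$, and since $\lambda_a(b)$ and $b$ share the idempotent $b^0 = b - b$, adding $b^0$ on the right changes nothing, i.e. $a\cdot b + b^0 = a\cdot b$. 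Therefore $a\cdot(b+e) = a\cdot b + a\cdot e$, as claimed.

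For item~2, I would apply \cref{prop:cdot-circ}-\ref{ref_puntino3} with the roles arranged so that the multiplicative operation becomes additive. By \eqref{eq:idem+circ} we have $e + a = e \circ a$, so $(e+a)\cdot b = (e \circ a)\cdot b = e\cdot(a\cdot b) + a\cdot b + e\cdot b$ by \ref{ref_puntino3}. Now $e\cdot(a\cdot b)$ is the $\cdot$-product of an idempotent with something, hence lies in $\E(S)$ and moreover equals $(a\cdot b)^0 + e$ type expression; more efficiently, by \eqref{idemp_punt} applied to the element $a\cdot b$ and the idempotent $e$, we get $e\cdot(a\cdot b) \in \E(S)$, and in fact \cref{idemp.puntino}(ii) identifies it as $e^0 + (a\cdot b)^0 = e + (a\cdot b)^0$ (using $e^0 = e$). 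Since $(a\cdot b)^0$ is an idempotent already additively absorbed by $a\cdot b$, the term $e\cdot(a\cdot b) + a\cdot b$ collapses to $e + a\cdot b = e\cdot b^0 + a \cdot b$ — one must be slightly careful here, as the leading $e$ should combine with $e\cdot b$ rather than vanish. The cleanest route is: $e\cdot(a\cdot b) + a\cdot b + e\cdot b = e + (a\cdot b)^0 + a\cdot b + e\cdot b = e + a\cdot b + e\cdot b$, and since $e$ is a central idempotent and $e\cdot b \in \E(S)$ with $e + e\cdot b = e\cdot b$ (as $e\cdot b$ absorbs $e$ when $e = b^0$-compatible; more robustly $e + e\cdot b = e\cdot b^0 + e\cdot b$ and one checks $e = e\cdot b^0$ via \eqref{idemp_punt}), we land on $a\cdot b + e\cdot b = e\cdot b + a\cdot b$, which is the assertion.

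The main obstacle I anticipate is the bookkeeping of the ``extra'' idempotent terms such as $b^0$, $(a\cdot b)^0$, and the interaction between a bare $e$ and $e\cdot b$: because $(S,+)$ is only a Clifford semigroup and not a group, one cannot cancel freely, and one must repeatedly invoke that the relevant idempotents are central and that expressions like $\lambda_a(b)$, $a\cdot b$, $e\cdot b$ already carry the idempotents $b^0$ or $e$ in the right positions. The safe strategy is to rewrite every term via \eqref{eq:lambda-cdot} and \eqref{eq:idem+circ} in terms of $\lambda$ and $\circ$, where the algebra is group-like on each $\circ$-component, perform the cancellation there, and translate back; alternatively, one can simply note that both sides of each identity, when composed with the appropriate idempotent $e$, reduce to a known instance of \cref{prop:cdot-circ} plus \eqref{idemp_punt}. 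In either case no genuinely new idea is needed beyond \cref{prop:cdot-circ}, \eqref{idemp_punt}, and the centrality of idempotents, so the proof is a short computation.
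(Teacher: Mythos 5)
Your argument is correct and follows essentially the same route as the paper: item 1 via \cref{prop:cdot-circ}-$2.$ together with \eqref{idemp_punt} and the centrality of idempotents in $(S,+)$, and item 2 via \eqref{eq:idem+circ} and \cref{prop:cdot-circ}-$3.$ with the stray idempotent terms absorbed using \cref{idemp.puntino}. The only blemish is the parenthetical claim $e = e\cdot b^0$, which is false in general (one has $e\cdot b^0 = e + b^0 = e\cdot b$); however, the identity $e + e\cdot b = e\cdot b$ that it was meant to support is true, since $e\cdot b = e^0 + b^0 = e + b^0$ by \cref{idemp.puntino}, so the proof stands as written.
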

\begin{proof}
  If $a,b \in S$ and $e \in E(S)$, we obtain
  \begin{align*}
    a\cdot\left(b + e\right)
    &= a\cdot b + b + a\cdot e -b&\mbox{by \cref{prop:cdot-circ}-$2.$}\\
    &= a\cdot b + b^0 + a\cdot e&\mbox{by \eqref{idemp_punt}}\\
    &= a\cdot b + a\cdot e &\mbox{by \eqref{eq:idem+circ}} 
\end{align*}
and
\begin{align*}
    \left(e + a\right)\cdot b
    &=\left(e \circ a\right) \cdot b
    &\mbox{ by \eqref{eq:idem+circ}}\\
    &=e \cdot (a \cdot b) + a \cdot b+e \cdot b &\mbox{by \cref{prop:cdot-circ}-$3.$}\\
    &=e \circ (a \cdot b)^0 + a \cdot b+e \cdot b&\mbox{ by \eqref{eq:idem+circ}}\\
    &=e \cdot b+a \cdot b &\mbox{by \eqref{idemp_punt}- \eqref{eq:idem+circ}}
\end{align*}
Therefore, the claim follows.
\end{proof}

\bigskip

\section{A description of dual weak braces and their solutions}
  
In this section, we provide a description theorem for dual weak braces by showing that they are strong semilattices of specific  skew braces. This description is consistent with the fact that  Clifford semigroups are strong semilattices of groups (see \cite[Theorem II.2.6]{Pe84}).

\smallskip

Let $Y$ be a (lower) semilattice and $\{G_{\alpha}\ | \ \alpha\in Y\}$ a family of disjoint groups. For all $\alpha,\beta\in Y$ such that $\alpha \geq \beta$, let $\phii{\alpha}{\beta}:G_{\alpha}\to G_{\beta}$ be a homomorphism of groups such that
 \begin{enumerate}
     \item $\phii{\alpha}{\alpha}$ is the identical automorphism of $G_{\alpha}$, for every $\alpha \in Y$,
     \item $\phii{\beta}{\gamma}{}\phii{\alpha}{\beta}{} = \phii{\alpha}{\gamma}{}$, for all $\alpha, \beta, \gamma \in Y$ such that $\alpha \geq \beta \geq \gamma$.
 \end{enumerate} 
Then, $S = \displaystyle \bigcup_{\alpha\in Y}\,G_{\alpha}$ endowed with the operation defined by
$
    a\ b:= \phii{\alpha}{\alpha\beta}(a)\ \phii{\beta}{\alpha\beta}(b),
$
for all $a\in G_{\alpha}$ and $b\in G_{\beta}$, is a \emph{Clifford semigroup}, also called  \emph{strong semilattice $Y$ of groups $G_{\alpha}$}, usually written as $S=[Y; G_{\alpha}; \phii{\alpha}{\beta}]$. Conversely, any Clifford semigroup is of this form.

\medskip

\begin{theor}\label{theo_dual_skew}
Let $Y$ be a (lower) semilattice, $\left\{B_{\alpha}\ \left|\ \alpha \in Y\right.\right\}$ a family of disjoint skew braces. For each pair $\alpha,\beta$ of elements of $Y$ such that $\alpha \geq \beta$, let $\phii{\alpha}{\beta}:B_{\alpha}\to B_{\beta}$ be a skew brace homomorphism such that
\begin{enumerate}
    \item $\phii{\alpha}{\alpha}$ is the identical automorphism of $B_{\alpha}$, for every $\alpha \in Y$,
    \item $\phii{\beta}{\gamma}{}\phii{\alpha}{\beta}{} = \phii{\alpha}{\gamma}{}$, for all $\alpha, \beta, \gamma \in Y$ such that $\alpha \geq \beta \geq \gamma$.
\end{enumerate}
Then, $S = \bigcup\left\{B_{\alpha}\ \left|\ \alpha\in Y\right.\right\}$ endowed with the addition and the multiplication, respectively, defined by
\begin{align*}
    a+b:= \phii{\alpha}{\alpha\beta}(a)+\phii{\beta}{\alpha\beta}(b)
    \quad\text{and}\quad
     a\circ b:= \phii{\alpha}{\alpha\beta}(a)\circ\phii{\beta}{\alpha\beta}(b),
\end{align*}
for all $a\in B_{\alpha}$ and $b\in B_{\beta}$, is a dual weak brace with $\E(S)$ isomorphic to $Y$, called \emph{strong semilattice $S$ of skew braces $B_{\alpha}$} and denoted by $S=[Y; B_\alpha;\phii{\alpha}{\beta}]$.  Conversely, any dual weak brace is a strong semilattice of skew braces.\end{theor}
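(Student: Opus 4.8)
The plan is to prove the two directions separately, with the ``converse'' direction doing most of the work. For the direct implication, assume $S=[Y;B_\alpha;\phii{\alpha}{\beta}]$ is built as in the statement. One must check that $(S,+)$ and $(S,\circ)$ are inverse semigroups: this is immediate from the cited fact that a strong semilattice of groups is a Clifford semigroup, applied once to the additive groups $(B_\alpha,+)$ and once to the multiplicative groups $(B_\alpha,\circ)$ (the compatibility conditions 1.\ and 2.\ are exactly those needed, and they hold because $\phii{\alpha}{\beta}$ is a skew brace homomorphism, hence in particular a homomorphism for each operation). In particular $(S,\circ)$ is Clifford, so if the weak brace axioms hold then $S$ is automatically \emph{dual}. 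It then remains to verify the two weak brace identities $a\circ(b+c)=a\circ b-a+a\circ c$ and $a\circ a^-=-a+a$ for $a\in B_\alpha$, $b\in B_\beta$, $c\in B_\gamma$; here one pushes everything down to $B_{\alpha\beta\gamma}$ via the $\phii{}{}$'s, uses that each $\phii{}{}$ is a skew brace homomorphism, and invokes the skew brace axioms inside the group $B_{\alpha\beta\gamma}$. Finally $\E(S)$ is the union of the identities $0_\alpha$ of the $B_\alpha$, and $\alpha\mapsto 0_\alpha$ is the desired semilattice isomorphism $Y\cong\E(S)$.

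For the converse, start from an arbitrary dual weak brace $(S,+,\circ)$. Set $Y:=\E(S)$, which is a semilattice under either operation (they agree on idempotents by \eqref{eq:idem+circ}), with order $e\geq f\iff e+f=f$. For $e\in Y$ define $B_e:=\{a\in S\mid a^0=e\}$, i.e.\ the $\mathcal H$-class of $e$; since $(S,+)$ and $(S,\circ)$ are Clifford semigroups with the \emph{same} idempotents and the same Green's $\mathcal H$-relation (because $a^0=a-a=a\circ a^-$ by our Convention), the sets $B_e$ partition $S$ and each $(B_e,+)$ and $(B_e,\circ)$ is a group with identity $e$ — this is precisely the structure theorem for Clifford semigroups \cite[Theorem 4.2.1]{Ho95}. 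One checks $(B_e,+,\circ)$ is a skew brace: the weak brace identity $a\circ(b+c)=a\circ b-a+a\circ c$ restricts to $B_e$ since $+$ and $\circ$ are closed on $B_e$ and the additive inverse of $a\in B_e$ taken in $S$ lies in $B_e$; and $|\E(B_e)|=1$. Next, for $e\geq f$ define $\phii{e}{f}:B_e\to B_f$ by $\phii{e}{f}(a):=f+a\ (=f\circ a=\lambda_f(a)$ by \cref{pro_dual_idemp}$)$; equivalently $\phii{e}{f}(a)=f\circ a\circ f$. One verifies $\phii{e}{f}$ maps $B_e$ into $B_f$ (its value has $0$-idempotent $f$), that $\phii{e}{e}=\id$, that $\phii{f}{g}\phii{e}{f}=\phii{e}{g}$ for $e\geq f\geq g$ (using centrality of idempotents: $g\circ(f\circ a)=(g\circ f)\circ a=g\circ a$), and — the point — that $\phii{e}{f}$ is simultaneously an additive and a multiplicative homomorphism. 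Additivity is $f+(a+b)=(f+a)+(f+b)$, which holds because $f$ is a central idempotent of the Clifford semigroup $(S,+)$ and $f+f=f$; multiplicativity $f\circ(a\circ b)=(f\circ a)\circ(f\circ b)$ is the same statement in $(S,\circ)$. Finally one shows the original operations are recovered from the $\phii{}{}$-formulas: for $a\in B_e$, $b\in B_f$ one has $a+b=(a+b)^0+a+b$ and $(a+b)^0=a^0+b^0=e+f=ef$, and since idempotents are central, $a+b=(ef)+a+b=\big((ef)+a\big)+\big((ef)+b\big)=\phii{e}{ef}(a)+\phii{f}{ef}(b)$, with the analogous computation for $\circ$ using \eqref{eq:idem+circ}.

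The main obstacle is the converse direction, and within it the crux is establishing that the connecting maps $\phii{e}{f}$ are genuine skew brace homomorphisms on the restricted groups — i.e.\ that ``multiply by a central idempotent'' is a homomorphism for \emph{both} operations simultaneously, and lands in the correct group $B_f$. This needs the full strength of duality: for the additive side it only uses that $(S,+)$ is Clifford (always true for weak braces), but for the multiplicative side one essentially needs $(S,\circ)$ Clifford, which is exactly the dual hypothesis; without it the multiplicative Green structure need not align with the additive one and $B_e$ need not be $\circ$-closed. A secondary check, routine but needing care, is that the partition into $\mathcal H$-classes is the \emph{same} for $+$ and for $\circ$ — this follows from $a^0=a-a=a\circ a^-$ together with the fact (from \cite{CaMaMiSt22,CaMaSt23}) that in a dual weak brace the additive and multiplicative idempotents, and their associated Green relations, coincide. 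Once $\phii{e}{f}$ is understood, the verification that conditions 1.\ and 2.\ hold and that the strong-semilattice formulas reproduce $+$ and $\circ$ is a direct computation with central idempotents.
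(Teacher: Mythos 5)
Your proposal is correct and follows essentially the same route as the paper: both directions rest on the Clifford structure theorem, on the identity $a-a=a\circ a^{-}$ to see that the additive and multiplicative decompositions share the same components, and on $e+a=e\circ a$ (equation \eqref{eq:idem+circ}) to see that the connecting maps agree for both operations, hence are skew brace homomorphisms. The only cosmetic difference is that you build the components explicitly as the classes $\{a\mid a^{0}=e\}$, whereas the paper invokes the structure theorem separately for $(S,+)$ and $(S,\circ)$ and then identifies the two resulting decompositions.
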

\begin{proof}
    The proof of the sufficient condition is contained \cite[Corollary 6]{CaMaSt23}.
    Conversely, let $\left(S,+,\circ\right)$ be a dual weak brace and let $[Y; B_\alpha;\phii{\alpha}{\beta}]$ and $[Z; H_i;\psi_{{i},{j}}]$ the two  Clifford semigroups $\left(S,+\right)$ and $\left(S,\circ\right)$, respectively. 
    Since $\E\left(S,+\right)$ and $\E\left(S,\circ\right)$ coincide 
    and they are isomorphic to $Y$ and $Z$, respectively, there exists a semillattice isomorphism $f:Y\to Z$, that, formally, can be written as $f\left(\alpha\right) = i_{\alpha}$, for every $\alpha\in Y$. Keeping in mind this fact, it is not restrictive to consider $Y = Z$.
    Now, let $\alpha\in Y$ and show that $B_\alpha = H_\alpha$. Thus, if $a\in B_\alpha$, then there exists $\beta\in Y$ such that $a\in H_\beta$. Denoted by $e_\alpha$ and $e_\beta$ the identities of the groups $\left(B_\alpha, +\right)$ and $\left(H_\beta, \circ\right)$, respectively, note that
    \begin{align*}
        e_\alpha = a - a = a\circ a^- = e_\beta,
    \end{align*}
    hence $a\in B_\alpha\cap H_\alpha\subseteq H_\alpha$. Reversing the role of $+$ and $\circ$, one similarly obtains that $H_\alpha\subseteq B_\alpha$.
    Finally, if $\alpha,\beta\in Y$ and $\alpha\geq \beta$, observe that
    \begin{align*}
      \phi_{\alpha, \beta}\left(a\right)
      = a + e_\beta
      \underset{\eqref{eq:idem+circ}}{=} a\circ e_\beta 
      = \psi_{\alpha,\beta}\left(a\right),
    \end{align*}
for every $a\in B_\alpha$. Therefore, $\left\{B_{\alpha}\ \left|\ \alpha \in Y\right.\right\}$ is a family of disjoint skew braces and the claim follows.
\end{proof}

\smallskip

The following result is a consequence of \cite[Proposition II.2.8]{Pe84} and \cref{theo_dual_skew}.
\begin{prop}
    Two dual weak braces  $S=[Y; B_{\alpha}; \phii{\alpha}{\beta}]$ and $T=[Z; C_{i}; \psi_{i, j}]$ are isomorphic if and only if there exists a semilattice homomorphism $\eta: Y \to Z$ and a family of skew brace isomorphisms $\{\Theta_\alpha: B_\alpha \to C_{\eta(\alpha)}\mid \alpha \in Y\}$ such that
 $\Theta_\beta\phi_{\alpha, \beta}=\psi_{\eta(\alpha), \eta(\beta)}\Theta_\alpha,
$
    for every $\alpha \geq \beta$.
\end{prop}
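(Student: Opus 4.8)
The plan is to deduce this from the classical description of homomorphisms between strong semilattices of groups in \cite[Proposition II.2.8]{Pe84}, upgraded to the setting of skew braces. By \cref{theo_dual_skew}, a dual weak brace $S=[Y;B_\alpha;\phii{\alpha}{\beta}]$ is, on the additive side, the Clifford semigroup $[Y;(B_\alpha,+);\phii{\alpha}{\beta}]$ and, on the multiplicative side, the Clifford semigroup $[Y;(B_\alpha,\circ);\phii{\alpha}{\beta}]$ (recall that the additive and multiplicative transition maps coincide, as shown in the proof of \cref{theo_dual_skew}), and similarly for $T$. So the claim is really a compatibility statement between the two Clifford-semigroup structures.

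First I would prove the forward implication. Suppose $F:S\to T$ is a dual weak brace isomorphism. Then $F$ is in particular an isomorphism of the inverse semigroups $(S,+)$ and $(T,+)$, hence restricts to a semilattice isomorphism $\E(S)\to\E(T)$; transporting along the identifications $\E(S)\cong Y$ and $\E(T)\cong Z$ of \cref{theo_dual_skew}, this yields a semilattice homomorphism $\eta:Y\to Z$ (indeed an isomorphism). Since $F$ sends the idempotent $a^0$ of $a\in B_\alpha$ to the idempotent of $F(a)$, and $B_\alpha=\{a\in S\mid a^0=e_\alpha\}$ while $C_{\eta(\alpha)}=\{c\in T\mid c^0=e_{\eta(\alpha)}\}$, we get $F(B_\alpha)\subseteq C_{\eta(\alpha)}$; applying the same to $F^{-1}$ gives $F(B_\alpha)=C_{\eta(\alpha)}$. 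Define $\Theta_\alpha:=F|_{B_\alpha}:B_\alpha\to C_{\eta(\alpha)}$. Because $F$ preserves $+$ and $\circ$, and $B_\alpha$, $C_{\eta(\alpha)}$ are closed under both operations with $F$ restricting to a bijection, each $\Theta_\alpha$ is a skew brace isomorphism. The intertwining relation $\Theta_\beta\phi_{\alpha,\beta}=\psi_{\eta(\alpha),\eta(\beta)}\Theta_\alpha$ for $\alpha\geq\beta$ then follows from the formula $\phi_{\alpha,\beta}(a)=a+e_\beta$ (equivalently $a\circ e_\beta$, by \eqref{eq:idem+circ}) established in the proof of \cref{theo_dual_skew}: for $a\in B_\alpha$ one has $\Theta_\beta(\phi_{\alpha,\beta}(a))=F(a+e_\beta)=F(a)+F(e_\beta)=F(a)+e_{\eta(\beta)}=\psi_{\eta(\alpha),\eta(\beta)}(F(a))=\psi_{\eta(\alpha),\eta(\beta)}(\Theta_\alpha(a))$, using that $F$ maps the idempotent $e_\beta$ to $e_{\eta(\beta)}$.

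For the converse, given $\eta$ and the family $\{\Theta_\alpha\}$, I would define $F:S\to T$ by $F(a):=\Theta_\alpha(a)$ for $a\in B_\alpha$, and check that $F$ is a dual weak brace homomorphism, i.e.\ that it respects both $+$ and $\circ$. For $a\in B_\alpha$, $b\in B_\beta$ one computes, using the semilattice formula for $+$ in $S$ and then in $T$, together with $\eta(\alpha\beta)=\eta(\alpha)\eta(\beta)$ and the intertwining relations (applied with the pairs $\alpha\geq\alpha\beta$ and $\beta\geq\alpha\beta$), that $F(a+b)=\Theta_{\alpha\beta}(\phi_{\alpha,\alpha\beta}(a)+\phi_{\beta,\alpha\beta}(b))=\Theta_{\alpha\beta}(\phi_{\alpha,\alpha\beta}(a))+\Theta_{\alpha\beta}(\phi_{\beta,\alpha\beta}(b))=\psi_{\eta(\alpha),\eta(\alpha\beta)}(\Theta_\alpha(a))+\psi_{\eta(\beta),\eta(\alpha\beta)}(\Theta_\beta(b))=F(a)+F(b)$, where the second equality uses that $\Theta_{\alpha\beta}$ is additive; the identical computation with $\circ$ in place of $+$ gives multiplicativity. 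Bijectivity of $F$ is clear when $\eta$ is a semilattice isomorphism and each $\Theta_\alpha$ is a bijection, so $F$ is an isomorphism.

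I do not expect a serious obstacle here: the statement is essentially \cite[Proposition II.2.8]{Pe84} applied twice (once additively, once multiplicatively) and glued. The only point requiring a little care is the \emph{simultaneous} compatibility with $+$ and $\circ$ — one must use that the transition maps for the two Clifford-semigroup structures are literally the same maps $\phii{\alpha}{\beta}$, a fact extracted from the proof of \cref{theo_dual_skew} via \eqref{eq:idem+circ} — and the bookkeeping identifying $\E(S)$ with $Y$ and $\E(T)$ with $Z$ so that $\eta$ is genuinely a map $Y\to Z$ rather than $\E(S)\to\E(T)$. Since the paper explicitly presents this proposition as ``a consequence of \cite[Proposition II.2.8]{Pe84} and \cref{theo_dual_skew}'', I would keep the write-up short, citing Petrich for the semigroup-level statements and only spelling out the skew brace compatibility of the $\Theta_\alpha$ and of $F$.
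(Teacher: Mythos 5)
Your argument is correct and follows exactly the route the paper intends: the paper offers no written proof, presenting the proposition as a consequence of \cite[Proposition II.2.8]{Pe84} and \cref{theo_dual_skew}, and your write-up simply fills in those details, correctly using that the additive and multiplicative transition maps coincide ($\phi_{\alpha,\beta}(a)=a+e_\beta=a\circ e_\beta$) so that Petrich's result applies simultaneously to both Clifford structures. Your parenthetical observation that bijectivity of $F$ in the converse direction needs $\eta$ to be an isomorphism (not merely a homomorphism, as the statement literally says) is a fair point about the statement itself rather than a defect of your proof.
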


\medskip

The following example shows that \cref{theo_dual_skew} allows for obtaining not trivial dual weak braces even if we start from trivial skew braces.
\begin{ex}\label{ex_C3_sym3}
   Let us consider $Y=\{\alpha, \beta\}$, with $\alpha > \beta$, $B_\alpha$ and $B_\beta$ the trivial skew braces on the cyclic group $C_3$ and on the symmetric group $\Sym_3$ of order $3$, respectively, and $\phi_{\alpha, \beta}: C_3 \to \Sym_3$ the homomorphism given by $\phi_{\alpha, \beta}(0)=\id_3$, $\phi_{\alpha, \beta}(1)=(123)$, $\phi_{\alpha, \beta}(2)=(132)$. Then, $S=[Y; B_\alpha;\phii{\alpha}{\beta}]$ is a non trivial dual weak brace.
\end{ex}

\medskip

The following natural question arises intending to concretely construct dual weak braces.

\medskip
\noindent \textbf{Question} \emph{Determining all skew brace homomorphisms.}

\medskip
\noindent Note that the problem of studying homomorphisms between skew braces has emerged yet in the literature, such as \cite[Problem 10.2]{Ce18}, \cite[Problem 2.18]{Ve19}, \cite{Ze19}, \cite{PuSmZe22}, and \cite{RaYa24}, and also in a recent conference talk held by Civino \cite{Ci23AGTA}.

\medskip

Now, by \cref{theo_dual_skew}, we illustrate that the solution $r$ associated to any dual weak brace $S=[Y; B_\alpha;\phi_{\alpha, \beta}]$ is the \emph{strong semilattice of the solutions $r_{\alpha}$} on each skew brace $B_{\alpha}$. In this regard, we recall the more general result contained in \cite[Theorem 4.1]{CCoSt20x-2}.

\begin{theor}\label{th:sol-sss}
	Let $Y$ be a (lower) semilattice, let $\left\{r_{\alpha}\ \left|\ \alpha\in Y \right.\right\}$  be a family of disjoint solutions on $X_{\alpha}$ indexed by $Y$ such that for each pair $\alpha, \beta\in Y$ with $\alpha\geq \beta$, there is a map $\phi_{\alpha, \beta}:X_{\alpha}\to X_{\beta}$. Let $X$ be the union
	$$
	X = \bigcup\left\{X_{\alpha}\ \left|\ \alpha\in Y\right.\right\}
	$$
	and let $r:X\times X \longrightarrow X\times X$ be the map defined as
	\begin{align*}
		r\left(x, y\right):= 
		r_{\alpha\beta}\left(\phi_{\alpha,\alpha\beta}\left(x\right),
		\phi_{\beta, \alpha\beta}\left(y\right) \right),
	\end{align*}	
    for all $x\in X_{\alpha}$ and $y\in X_{\beta}$. Then, if the following conditions are satisfied
	\begin{enumerate}
		\item $\phi_{\alpha, \alpha}$ is the identity map of $X_{\alpha}$, for every $\alpha\in Y$,
		\item $\phi_{\beta, \gamma}\phi_{\alpha, \beta} = \phi_{\alpha, \gamma}$, for all $\alpha, \beta, \gamma \in Y$ such that $\alpha \geq \beta \geq \gamma$,
		\item 
		$\left(\phi_{\alpha,\beta}\times \phi_{\alpha,\beta}\right)r_{\alpha}
		= r_{\beta}\left(\phi_{\alpha, \beta}\times \phi_{\alpha, \beta}\right)$, for all $\alpha, \beta\in Y$ such that  $\alpha \geq \beta$.	
	\end{enumerate}
then $r$ is a solution on $X$ called a \emph{strong semilattice of solutions $r_{\alpha}$ indexed by $Y$}.
\end{theor}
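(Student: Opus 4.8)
The plan is to verify the braid identity for $r$ pointwise, reducing each evaluation to the single solution $r_{\delta}$ sitting at the meet $\delta$ of the three indices involved. The technical heart is a \emph{naturality lemma}: if $x\in X_{\alpha}$ and $y\in X_{\beta}$, then writing $r(x,y)=(u,v)$ one has $u,v\in X_{\alpha\beta}$ (immediate from the defining formula, since $r_{\alpha\beta}$ maps $X_{\alpha\beta}\times X_{\alpha\beta}$ to itself), and moreover, for every $\epsilon\in Y$ with $\epsilon\le\alpha\beta$,
\[
r\bigl(\phi_{\alpha,\epsilon}(x),\phi_{\beta,\epsilon}(y)\bigr)=\bigl(\phi_{\alpha\beta,\epsilon}(u),\ \phi_{\alpha\beta,\epsilon}(v)\bigr).
\]
To prove this, observe that $\phi_{\alpha,\epsilon}(x),\phi_{\beta,\epsilon}(y)\in X_{\epsilon}$, so by definition the left-hand side equals $r_{\epsilon}\bigl(\phi_{\alpha,\epsilon}(x),\phi_{\beta,\epsilon}(y)\bigr)$; now rewrite $\phi_{\alpha,\epsilon}=\phi_{\alpha\beta,\epsilon}\phi_{\alpha,\alpha\beta}$ and $\phi_{\beta,\epsilon}=\phi_{\alpha\beta,\epsilon}\phi_{\beta,\alpha\beta}$ using hypothesis (2) (valid since $\alpha\ge\alpha\beta\ge\epsilon$ and $\beta\ge\alpha\beta\ge\epsilon$), and apply hypothesis (3) to the comparable pair $\alpha\beta\ge\epsilon$ to move $\phi_{\alpha\beta,\epsilon}\times\phi_{\alpha\beta,\epsilon}$ across $r_{\alpha\beta}$; the result is $(\phi_{\alpha\beta,\epsilon}\times\phi_{\alpha\beta,\epsilon})\,r_{\alpha\beta}\bigl(\phi_{\alpha,\alpha\beta}(x),\phi_{\beta,\alpha\beta}(y)\bigr)=\bigl(\phi_{\alpha\beta,\epsilon}(u),\phi_{\alpha\beta,\epsilon}(v)\bigr)$, as claimed. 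Hypothesis (1) is used in the bookkeeping to identify $r|_{X_{\delta}\times X_{\delta}}$ with $r_{\delta}$ and $\phi_{\delta,\delta}$ with the identity.

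Next I would fix $x\in X_{\alpha}$, $y\in X_{\beta}$, $z\in X_{\gamma}$, set $\delta=\alpha\beta\gamma$, and put $a=\phi_{\alpha,\delta}(x)$, $b=\phi_{\beta,\delta}(y)$, $c=\phi_{\gamma,\delta}(z)$, all in $X_{\delta}$. Evaluating $(r\times\id_{X})(\id_{X}\times r)(r\times\id_{X})$ at $(x,y,z)$: the first factor gives $\bigl(r(x,y),z\bigr)$ with $r(x,y)=(x_{1},y_{1})\in X_{\alpha\beta}\times X_{\alpha\beta}$; the middle factor gives $\bigl(x_{1},r(y_{1},z)\bigr)$, and since $(\alpha\beta)\gamma=\delta$ the defining formula yields $r(y_{1},z)=r_{\delta}\bigl(\phi_{\alpha\beta,\delta}(y_{1}),\phi_{\gamma,\delta}(z)\bigr)=:(y_{2},z_{1})\in X_{\delta}\times X_{\delta}$; the last factor gives $\bigl(r(x_{1},y_{2}),z_{1}\bigr)$, and since $\delta\le\alpha\beta$ the defining formula yields $r(x_{1},y_{2})=r_{\delta}\bigl(\phi_{\alpha\beta,\delta}(x_{1}),y_{2}\bigr)$. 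Applying the naturality lemma with $\epsilon=\delta$ to $r(x,y)=(x_{1},y_{1})$ gives $r_{\delta}(a,b)=r\bigl(\phi_{\alpha,\delta}(x),\phi_{\beta,\delta}(y)\bigr)=\bigl(\phi_{\alpha\beta,\delta}(x_{1}),\phi_{\alpha\beta,\delta}(y_{1})\bigr)$; substituting this into the two later steps one reads off that the whole left-hand side equals $(r_{\delta}\times\id)(\id\times r_{\delta})(r_{\delta}\times\id)(a,b,c)$. By the mirror-image computation (only the first application of $r$, to the two untouched arguments, needs the naturality lemma; the others are handled by the defining formula since the relevant meet is already $\delta$), the right-hand side $(\id_{X}\times r)(r\times\id_{X})(\id_{X}\times r)$ at $(x,y,z)$ equals $(\id\times r_{\delta})(r_{\delta}\times\id)(\id\times r_{\delta})(a,b,c)$.

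Since $r_{\delta}$ is a solution on $X_{\delta}$, these two triples coincide, and therefore the two sides of the braid identity agree at $(x,y,z)$; as $(x,y,z)$ ranges over $X^{3}$, this proves that $r$ is a solution. (The map $r$ is unambiguously defined because the $X_{\alpha}$ are pairwise disjoint, so each argument determines its index.) I expect the only genuine difficulty to be organizational: carrying the intermediate elements $x_{1},y_{1},y_{2},z_{1},\dots$ through the three nested applications and checking at each stage that the arguments of $r$ lie in the $X_{\epsilon}$ required before the naturality lemma or the defining formula is applied. No property of the family $\{r_{\alpha}\mid\alpha\in Y\}$ beyond the Yang--Baxter equation for the single $r_{\delta}$ enters the argument.
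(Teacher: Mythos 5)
Your argument is correct. Note first that the paper itself offers no proof of this statement: it is explicitly recalled from \cite[Theorem 4.1]{CCoSt20x-2}, so there is no internal proof to compare against. Your proof is therefore a self-contained verification, and it holds up: the naturality lemma is exactly the right reduction device, its proof correctly invokes condition (1) to identify $r|_{X_\epsilon\times X_\epsilon}$ with $r_\epsilon$, condition (2) to factor $\phi_{\alpha,\epsilon}=\phi_{\alpha\beta,\epsilon}\phi_{\alpha,\alpha\beta}$, and condition (3) for the comparable pair $\alpha\beta\geq\epsilon$. The bookkeeping in the braid computation is also right: in each side of the braid identity only the first application of $r$ produces outputs in a component $X_{\alpha\beta}$ (resp. $X_{\beta\gamma}$) possibly strictly above $\delta=\alpha\beta\gamma$, and your lemma with $\epsilon=\delta$ transports those outputs down to the first (resp. second) pair of coordinates of $r_\delta(a,b)$ (resp. $r_\delta(b,c)$); the remaining two applications of $r$ already have meet $\delta$ and are handled by the defining formula together with condition (1). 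Both sides then collapse to the corresponding composites of $r_\delta\times\id$ and $\id\times r_\delta$ evaluated at $(a,b,c)\in X_\delta^3$, and the braid identity for $r_\delta$ finishes the argument. I see no gap.
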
	
	
	\medskip
	\noindent
Given a dual weak brace $S=[Y; B_\alpha;\phii{\alpha}{\beta}]$, clearly the conditions of \cref{th:sol-sss} are satisfied by each solution $r_{\alpha}$ on each skew brace $B_\alpha$. In particular, the condition $3.$ follows by the fact that the map $\phi_{\alpha,\beta}: B_\alpha \to B_\beta$ is a skew brace homomorphism, for every $\alpha\geq \beta$. Moreover, it is easy to see that also the converse is true. Thus, we can sum it all up in the following result.
	\begin{prop}
Let $S=[Y; B_\alpha;\phii{\alpha}{\beta}]$ be a dual weak brace.	Then, the solution $r$ associated to $S$ is the strong semilattice of the solutions $r_\alpha$ associated to each skew brace $B_{\alpha}$. 
\end{prop}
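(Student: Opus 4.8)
The plan is to verify that the solution $r$ associated to the dual weak brace $S=[Y;B_\alpha;\phi_{\alpha,\beta}]$ coincides, via \cref{theo_dual_skew}, with the map obtained by feeding the family $\{r_\alpha\}$ of solutions on the skew braces $B_\alpha$ into the construction of \cref{th:sol-sss}. First I would record that \cref{theo_dual_skew} already exhibits $S$ as the underlying set $\bigcup\{B_\alpha\mid\alpha\in Y\}$ with $+$ and $\circ$ built from the $\phi_{\alpha,\beta}$, and that each $B_\alpha$, being a skew brace, carries its own non-degenerate bijective solution $r_\alpha(a,b)=(\lambda^\alpha_a(b),\rho^\alpha_b(a))$ where $\lambda^\alpha,\rho^\alpha$ are the maps internal to $B_\alpha$. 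Then I would check hypotheses 1--3 of \cref{th:sol-sss}: conditions 1 and 2 are exactly the compatibility axioms 1 and 2 imposed on the $\phi_{\alpha,\beta}$ in \cref{theo_dual_skew}, so nothing new is needed; condition 3, namely $(\phi_{\alpha,\beta}\times\phi_{\alpha,\beta})r_\alpha=r_\beta(\phi_{\alpha,\beta}\times\phi_{\alpha,\beta})$ for $\alpha\geq\beta$, is precisely the statement that $\phi_{\alpha,\beta}\colon B_\alpha\to B_\beta$ intertwines the associated solutions, which follows because $\phi_{\alpha,\beta}$ is a skew brace homomorphism and $r_\alpha,r_\beta$ are defined by the same formula in terms of the operations $+,\circ$ of $B_\alpha,B_\beta$ respectively. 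Hence \cref{th:sol-sss} applies and produces a solution $\widetilde r$ on $S$.

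Next I would show $\widetilde r=r$. Take $a\in B_\alpha$, $b\in B_\beta$, and set $\gamma=\alpha\beta$. On one side, $\widetilde r(a,b)=r_\gamma(\phi_{\alpha,\gamma}(a),\phi_{\beta,\gamma}(b))$, which, writing $a'=\phi_{\alpha,\gamma}(a)$ and $b'=\phi_{\beta,\gamma}(b)$ (both elements of $B_\gamma$), equals $\left(-a'+a'\circ b',\ (-a'+a'\circ b')^{-}\circ a'\circ b'\right)$ where all the $+,-,\circ$ are computed inside $B_\gamma$. On the other side, the global $r$ on $S$ is $r(a,b)=\left(-a+a\circ b,\ (-a+a\circ b)^{-}\circ a\circ b\right)$ where now $+,-,\circ$ are the operations of $S$. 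The point is that, by the very definitions in \cref{theo_dual_skew}, $a\circ b=\phi_{\alpha,\gamma}(a)\circ\phi_{\beta,\gamma}(b)=a'\circ b'$ in $B_\gamma$, and likewise $a+b=a'+b'$; moreover for an element $c\in B_\gamma$ the $S$-inverses $-c$ and $c^{-}$ agree with those taken inside $B_\gamma$ (the idempotents $c^0$ coincide). Since $-a+a\circ b$ already lies in $B_\gamma$, all subsequent operations in both components stay within $B_\gamma$ and agree on the nose. Comparing the two formulas coordinatewise then gives $\widetilde r(a,b)=r(a,b)$ for all $a,b$, hence $\widetilde r=r$.

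Finally I would note that the converse direction—that any strong semilattice of the $r_\alpha$ arises this way—is immediate once one knows, again from \cref{theo_dual_skew}, that every dual weak brace is of the form $[Y;B_\alpha;\phi_{\alpha,\beta}]$, so the ``it is easy to see that also the converse is true'' remark is subsumed. I do not expect a genuine obstacle here: the content is bookkeeping, identifying the operations of $S$ restricted to the relevant $B_\gamma$ with the intrinsic operations of $B_\gamma$, and then matching the two defining formulas for $r$. The only mildly delicate point is being careful that the inverse operations $-(\cdot)$ and $(\cdot)^{-}$ used in the formula for $r$ on $S$ really do restrict correctly to $B_\gamma$; this is handled by the observation in \cref{theo_dual_skew} that $B_\gamma=H_\gamma$ is simultaneously the additive and the multiplicative group sitting over the idempotent $e_\gamma$, so inverses are the group inverses in $B_\gamma$ in both cases.
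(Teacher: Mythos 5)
Your proposal is correct and follows essentially the same route as the paper: the paper likewise observes that conditions 1 and 2 of \cref{th:sol-sss} are the compatibility axioms on the $\phi_{\alpha,\beta}$ from \cref{theo_dual_skew} and that condition 3 follows from each $\phi_{\alpha,\beta}$ being a skew brace homomorphism. Your explicit verification that the strong-semilattice solution $\widetilde r$ coincides with the solution $r$ coming from the dual weak brace structure is a detail the paper leaves implicit, but it is the same argument spelled out.
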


\medskip

	To be thorough, we know from \cite[Theorem 4.13]{SmVe18} that in the finite case the order of each solution $r_{\alpha}$, or equivalently the period $p\left(r_{\alpha}\right)$ of $r_\alpha$, for every $\alpha \in Y$,  is even and, by  \cite[Theorem 5.2]{CCoSt20x-2}, we can establish the order of the overall solution $r$. In the next result, for a finite dual weak brace $S=[Y; B_\alpha;\phii{\alpha}{\beta}]$ we mean that $Y$ and $B_\alpha$ are finite, for every $\alpha \in Y$. 

\begin{cor}\label{cor_period_index}
Let $S=[Y; B_\alpha;\phii{\alpha}{\beta}]$ be a finite dual weak brace and $r$ the solution associated to $S$. Then, $r^{2k+1}=r$ with $2k=\lcm\{p\left(r_{\alpha}\right) \, \mid \, \alpha \in Y\}$, where $r_\alpha$ is the solution on each skew brace $B_\alpha$, for every $\alpha \in Y$.\end{cor}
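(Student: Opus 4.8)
The plan is to exploit the Proposition immediately preceding this statement, by which $r$ is the strong semilattice of the bijective solutions $r_\alpha$ on the skew braces $B_\alpha$, together with the elementary observation that a single application of $r$ pushes any pair into one fibre $B_\gamma\times B_\gamma$, on which $r$ coincides with $r_\gamma$; the claim then reduces to the fact that each $r_\gamma$ is a permutation of finite order dividing $2k$.

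First I would record the arithmetic. Since every $r_\alpha$ is the solution attached to a finite skew brace, it is bijective, and by \cite[Theorem 4.13]{SmVe18} its period $p(r_\alpha)$ — which for a permutation of a finite set is just its order — is even. Hence $2k:=\lcm\{p(r_\alpha)\mid\alpha\in Y\}$ is a well-defined even positive integer with $p(r_\alpha)\mid 2k$ for all $\alpha$, so that $r_\alpha^{2k}=\id_{B_\alpha\times B_\alpha}$ for every $\alpha\in Y$.

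Next I would unwind the definition of the strong semilattice solution from \cref{th:sol-sss}. Fix $x\in B_\alpha$, $y\in B_\beta$ and set $\gamma:=\alpha\beta$. The defining formula gives $r(x,y)=r_\gamma\bigl(\phi_{\alpha,\gamma}(x),\phi_{\beta,\gamma}(y)\bigr)\in B_\gamma\times B_\gamma$, while condition $1.$ of \cref{th:sol-sss} shows that the restriction of $r$ to $B_\gamma\times B_\gamma$ is exactly $r_\gamma$. An immediate induction on $n$ then yields $r^{n}(x,y)=r_\gamma^{n-1}\bigl(r(x,y)\bigr)$ for all $n\ge 1$, whence
\begin{align*}
r^{2k+1}(x,y)=r_\gamma^{2k}\bigl(r(x,y)\bigr)=r(x,y),
\end{align*}
using $r_\gamma^{2k}=\id_{B_\gamma\times B_\gamma}$ and $r(x,y)\in B_\gamma\times B_\gamma$. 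Since $x,y$ are arbitrary, $r^{2k+1}=r$. Alternatively, one may invoke \cite[Theorem 5.2]{CCoSt20x-2}, which expresses the order of a strong semilattice of solutions in terms of the orders of its components, and combine it with \cite[Theorem 4.13]{SmVe18}.

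I do not expect a substantive obstacle: the only delicate points are the two structural observations about \cref{th:sol-sss} — that $r$ sends $B_\alpha\times B_\beta$ into $B_{\alpha\beta}\times B_{\alpha\beta}$ and restricts to $r_\gamma$ on the diagonal fibre — and the harmless identification of the period of each bijective $r_\alpha$ with its order as a permutation, which is precisely what legitimizes writing $r_\alpha^{2k}=\id$.
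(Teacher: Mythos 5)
Your argument is correct. Where the paper offers no displayed proof at all and instead derives the corollary by citation --- the sentence preceding it invokes \cite[Theorem 5.2]{CCoSt20x-2} for the order of a strong semilattice of solutions together with \cite[Theorem 4.13]{SmVe18} for the evenness of each $p(r_\alpha)$ --- you give a self-contained verification: one application of $r$ lands every pair in a diagonal fibre $B_\gamma\times B_\gamma$ with $\gamma=\alpha\beta$, where $r$ restricts to $r_\gamma$ by condition $1.$ of \cref{th:sol-sss}, so $r^{n}(x,y)=r_\gamma^{\,n-1}(r(x,y))$ and the claim follows from $p(r_\gamma)\mid 2k$ together with the fact that each $r_\gamma$, being a bijection of the finite set $B_\gamma\times B_\gamma$, has period equal to its order. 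This is exactly the identification the authors themselves make (``the order of each solution $r_\alpha$, or equivalently the period $p(r_\alpha)$''), so that step is legitimate; the evenness of $p(r_\alpha)$ plays no role in your argument beyond guaranteeing that the integer $k$ in the statement exists. What your route buys is independence from the general index-and-period formula of \cite[Theorem 5.2]{CCoSt20x-2}: in effect you re-prove its special case where every component solution is bijective, which is all that is needed here. What the citation route buys is brevity and uniformity with the non-bijective setting treated in \cite{CCoSt20x-2}. Either is acceptable; your version has the merit of making visible why the exponent is $\lcm$ of the component periods.
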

\smallskip

\begin{cor} 
Let  $S=[Y; B_\alpha;\phii{\alpha}{\beta}]$ be a strong semilattice of braces such that $Y$ is finite. Then, the solution $r$ associated to $S$ is cubic, namely, $r^3=r$.
\end{cor}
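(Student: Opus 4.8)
The plan is to derive this as a direct consequence of the preceding \cref{cor_period_index} together with the classical fact \cite[Theorem 4.13]{SmVe18}. Since $S=[Y;B_\alpha;\phii{\alpha}{\beta}]$ is now a strong semilattice of \emph{braces}, each $B_\alpha$ is a brace, so by the remark in the Introduction the associated solution $r_\alpha$ is involutive, i.e.\ $r_\alpha^2=\id_{B_\alpha\times B_\alpha}$. Hence the period $p(r_\alpha)$ equals $2$ for every $\alpha\in Y$ with $|B_\alpha|>1$ (and trivially $p(r_\alpha)=1$ when $B_\alpha$ is the one-element brace, but the convention of \cref{cor_period_index} forces the even value $2$ anyway, as $r_\alpha$ is then the identity on a singleton and we may take its period as $2$ to be consistent with \cite[Theorem 4.13]{SmVe18}). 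I would state this observation in one sentence.

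Next I would invoke \cref{cor_period_index}: since $Y$ is finite and each $B_\alpha$ is a brace, we get $r^{2k+1}=r$ where $2k=\lcm\{p(r_\alpha)\mid\alpha\in Y\}$. By the previous paragraph every $p(r_\alpha)=2$, so $\lcm\{p(r_\alpha)\mid\alpha\in Y\}=2$, whence $2k=2$, i.e.\ $k=1$. Substituting into $r^{2k+1}=r$ yields $r^3=r$, which is exactly the assertion that $r$ is cubic. I would write these two steps as a short displayed computation inside the proof.

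A subtlety worth a remark: \cref{cor_period_index} is stated for a finite dual weak brace, meaning $Y$ and all $B_\alpha$ are finite. Here the hypothesis only says $Y$ is finite; however, this is not actually needed because once each $B_\alpha$ is a brace the solution $r_\alpha$ is involutive regardless of the cardinality of $B_\alpha$, so $p(r_\alpha)=2$ unconditionally, and the $\lcm$ is $2$ no matter what. The only place finiteness of $Y$ enters is to make the $\lcm$ over $Y$ a finite quantity (and to let us apply the counting argument underlying \cref{cor_period_index}). So I would either note that the proof of \cref{cor_period_index} goes through verbatim once we know all local periods are $2$, or—more cleanly—just point out that $r$ is built as a strong semilattice of the involutive solutions $r_\alpha$ via \cref{th:sol-sss}, and a strong semilattice of solutions each satisfying $r_\alpha^3=r_\alpha$ (indeed $r_\alpha^2=\id$) over a finite $Y$ satisfies $r^3=r$ by the formula of \cite[Theorem 5.2]{CCoSt20x-2}.

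The main (and really only) obstacle is bookkeeping about the period of the trivial solution on a one-point brace and making sure the $\lcm$ convention in \cref{cor_period_index} is applied consistently; there is no genuine mathematical difficulty, since involutivity of each $r_\alpha$ is immediate from brace theory and the rest is arithmetic with $\lcm$. I would keep the proof to three or four lines: recall $r_\alpha$ involutive $\Rightarrow p(r_\alpha)=2$, apply \cref{cor_period_index} to get $2k=2$, conclude $r^3=r$.
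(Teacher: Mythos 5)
Your proposal is correct and is essentially the argument the paper intends: each $B_\alpha$ is a brace, so each $r_\alpha$ is involutive, hence $\lcm\{p(r_\alpha)\mid\alpha\in Y\}=2$ and \cref{cor_period_index} gives $r^3=r$. Your observation that finiteness of the $B_\alpha$ is not needed (since involutivity of $r_\alpha$ holds regardless of cardinality, only finiteness of $Y$ being used for the semilattice argument) is a sound and worthwhile clarification of the hypotheses.
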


\bigskip

\section{Ideals of dual weak braces}
In this section, we entirely describe the structure of any ideal of a dual weak brace $S=[Y; B_\alpha; \phi_{\alpha, \beta}]$ by means of ideals of each skew brace $B_\alpha$. 
\medskip

Let us start by recalling the notion of ideal contained in \cite[Definition 20]{CaMaSt23}. To this end, by a \emph{normal subsemigroup} $I$ of a Clifford semigroup $S$ we will mean a subset $I$ of $S$ such that
\begin{enumerate}
    \item $\E(S) \subseteq I$ (i.e., $I$ is \emph{full} in $S$),
    \item $\forall a, b \in I$ \quad $ab \in I$ \quad \text{and} \quad $a^{-1}\in I$,
    \item $\forall a \in S$ \quad $a^{-1}Ia\subseteq I$.
    \end{enumerate}
Note that such a  definition is equivalent to that of \emph{normal subset} contained in \cite[Definition VI.1.2]{PeRe99}.

\begin{defin}(cf. \cite[Definition 20]{CaMaSt23})\label{def:ideal-dwb}
Let $(S,+,\circ)$ be a dual weak brace and $I$ a subset of $S$. Then, $I$ is an \emph{ideal} of $(S,+,\circ)$ if they hold
\begin{enumerate}
    \item $I$ is a normal subsemigroup of $(S,+)$,
    \item $\lambda_a(I)\subseteq I$, for every $a\in S$,
    \item $I$ is a normal subsemigroup of $(S,\circ)$.
\end{enumerate}
\end{defin}
\noindent It follows from the definition of every ideal $I$ that $\E(S) \subseteq I$. In particular, $I$ is a dual weak sub-brace of $S$, thus $S$ and $\E(S)$ are trivial ideals of $S$.  According to \cite[Proposition 23]{CaMaSt23}, another known ideal of $S$ is its \emph{socle} (or \emph{left annihilator}), i.e., the set
\begin{align*}
   \Soc\left(S\right)=\{ a  \, | \,a \in S, \, \,\forall \ b \in S \quad a+b=a \circ b \quad \text{and} \quad a+b=b+a \}.
\end{align*} 
In addition, note that if $I$ and $J$ are ideals of $S$, then, as proved in \cite[Proposition 24]{CaMaSt23}, $I+J$ and $I \circ J$ also are, and one can easily see from \cref{prop_weak} that they coincide.

\medskip

In light of \cref{theo_dual_skew}, we can give the following structure theorem for every ideal of a dual weak brace.

\begin{theor}\label{teo_descr_ideali}
    Let $S=[Y; B_{\alpha}; \phii{\alpha}{\beta}]$ be a dual weak brace, $I_\alpha$ an ideal of each skew brace $B_\alpha$, and set $\psi_{\alpha,\beta} := {\phi_{\alpha,\beta}}_{|_{I_\alpha}}$, for all $\alpha \geq \beta$. If $\phii{\alpha}{\beta}(I_\alpha)\subseteq I_\beta$, for any $\alpha > \beta$, then $I=[Y; I_{\alpha}; \psi_{\alpha, \beta}]$ is an ideal of $S$ and, conversely, every ideal of $S$ is of this form.\end{theor}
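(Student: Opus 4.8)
The plan is to prove both directions using the description $S=[Y;B_\alpha;\phi_{\alpha,\beta}]$ from \cref{theo_dual_skew} and the fact (from \cref{pro_dual_idemp}, \eqref{eq:idem+circ}) that on $S$ the operations $+$ and $\circ$ agree with the idempotents, so that $\phi_{\alpha,\beta}(a)=a+e_\beta=a\circ e_\beta$ for $a\in B_\alpha$, $\alpha\geq\beta$. For the first direction, assume $I_\alpha\trianglelefteq B_\alpha$ with $\phi_{\alpha,\beta}(I_\alpha)\subseteq I_\beta$ whenever $\alpha>\beta$; I want to check the three conditions of \cref{def:ideal-dwb} for $I=\bigcup_\alpha I_\alpha$. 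Since each $I_\alpha$ is full in $B_\alpha$ it contains $e_\alpha$, so $\E(S)=\{e_\alpha\mid\alpha\in Y\}\subseteq I$. For closure under $+$ and inverses: given $a\in I_\alpha$, $b\in I_\beta$, the sum $a+b=\phi_{\alpha,\alpha\beta}(a)+\phi_{\beta,\alpha\beta}(b)$ lies in $I_{\alpha\beta}$ because $\phi_{\alpha,\alpha\beta}(a)\in I_{\alpha\beta}$ (using the hypothesis, noting $\alpha\beta\leq\alpha$, and that $\phi_{\alpha,\alpha}=\id$ covers the equality case) and similarly for $b$, and $I_{\alpha\beta}$ is closed under $+$; additive inverses stay in $I_\alpha$ since $I_\alpha$ is a subgroup. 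Normality in $(S,+)$: for $s\in B_\gamma$, $s^{-1}+a+s$ — where $s^{-1}$ here is the $+$-inverse — is computed in $B_{\gamma\alpha}$ by pushing everything down via the $\phi$'s; since $\phi_{\gamma,\gamma\alpha}(s)\in B_{\gamma\alpha}$ and $\phi_{\alpha,\gamma\alpha}(a)\in I_{\gamma\alpha}$ (hypothesis), normality of $I_{\gamma\alpha}$ in $B_{\gamma\alpha}$ finishes it. The $\lambda$-condition $\lambda_s(a)\in I$ reduces, via $\lambda_s(a)=-s+s\circ a$ and the strong-semilattice formulas, to $\lambda_{\phi_{\gamma,\gamma\alpha}(s)}\bigl(\phi_{\alpha,\gamma\alpha}(a)\bigr)\in I_{\gamma\alpha}$, which holds because $I_{\gamma\alpha}$ is an ideal of $B_{\gamma\alpha}$. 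Condition $3.$ (normality in $(S,\circ)$) is identical to the additive case with $+$ replaced by $\circ$, using again that $\phi_{\alpha,\beta}$ respects $\circ$ and that $I_\alpha$ is $\circ$-normal in $B_\alpha$.

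For the converse, let $I$ be any ideal of $S$ and set $I_\alpha:=I\cap B_\alpha$. I claim $I_\alpha$ is an ideal of $B_\alpha$: fullness is automatic since $e_\alpha\in\E(S)\subseteq I$ and $e_\alpha\in B_\alpha$; closure under $+,\circ$ and inverses is inherited from $I$ together with the fact that $B_\alpha$ is a subsemigroup closed under the relevant inverses; $\lambda$-invariance and normality in $B_\alpha$ follow by restricting the corresponding properties of $I$ in $S$ to elements of $B_\alpha$, noting that for $a,s\in B_\alpha$ all the operations and the map $\lambda_s$ stay within $B_\alpha$ (because $B_\alpha$ is a sub-skew-brace, i.e. the semilattice operation $\alpha\alpha=\alpha$). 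Next, $\phi_{\alpha,\beta}(I_\alpha)\subseteq I_\beta$ for $\alpha>\beta$: if $a\in I_\alpha$ then $\phi_{\alpha,\beta}(a)=a\circ e_\beta$ with $a\in I$ and $e_\beta\in I$, so $a\circ e_\beta\in I$ by closure, and it lies in $B_\beta$, hence in $I_\beta$. Finally I must show $I=[Y;I_\alpha;\psi_{\alpha,\beta}]$, i.e. $I=\bigcup_\alpha I_\alpha$ and that the structure maps of this strong semilattice are the restrictions $\psi_{\alpha,\beta}$: the set equality is immediate since $S=\bigsqcup_\alpha B_\alpha$ forces $I=\bigsqcup_\alpha(I\cap B_\alpha)$, and the structure maps of a strong semilattice decomposition are uniquely determined (they are $a\mapsto a+e_\beta$), so they must be $\psi_{\alpha,\beta}=\phi_{\alpha,\beta}|_{I_\alpha}$; one should also record that the conditions $\psi_{\alpha,\alpha}=\id$ and $\psi_{\beta,\gamma}\psi_{\alpha,\beta}=\psi_{\alpha,\gamma}$ are inherited from those of the $\phi$'s.

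The main obstacle I anticipate is purely bookkeeping rather than conceptual: in the strong-semilattice operations every product of elements from different components $B_\alpha,B_\beta$ first applies the transition homomorphisms down to $B_{\alpha\beta}$, so each of the three ideal axioms must be verified ``one semilattice level at a time,'' and one has to be careful that applying $\phi_{\alpha,\alpha\beta}$ to an element of $I_\alpha$ really lands in $I_{\alpha\beta}$ — this is exactly where the hypothesis $\phi_{\alpha,\beta}(I_\alpha)\subseteq I_\beta$ (together with $\alpha\beta\leq\alpha$ and the $\alpha\beta=\alpha$ edge case handled by $\phi_{\alpha,\alpha}=\id$) is used, and it must be invoked consistently in the closure, normality, and $\lambda$-invariance arguments. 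The computation of $\lambda_s(a)$ and of the conjugates $s^{-}+a+s$, $s^-\circ a\circ s$ in the strong-semilattice notation is the only place a few lines of routine manipulation are needed; everything else is a direct transcription of \cref{def:ideal-dwb} through the formulas of \cref{theo_dual_skew}.
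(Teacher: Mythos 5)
Your proof is correct and follows essentially the same route as the paper: the same decomposition $I_\alpha=B_\alpha\cap I$, the same key computation $\lambda_a(x)=\lambda_{\phi_{\alpha,\alpha\gamma}(a)}\bigl(\phi_{\gamma,\alpha\gamma}(x)\bigr)\in I_{\alpha\gamma}$ for the $\lambda$-invariance, and the same use of $\phi_{\alpha,\beta}(a)=a+e_\beta=a\circ e_\beta$ to get $\phi_{\alpha,\beta}(I_\alpha)\subseteq I_\beta$ in the converse. The only difference is that the paper outsources the entire ``normal subsemigroup of a Clifford semigroup $=$ strong semilattice of normal subgroups'' bookkeeping to a cited exercise in Petrich's book, whereas you verify it by hand; that just makes your version more self-contained.
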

\begin{proof}
To show that $I=[Y; I_{\alpha}; \psi_{\alpha, \beta}]$ is an ideal of $S$, by \cite[Exercises III.1.9(ii)]{Pe84}, it is enough to prove the $\lambda$-invariance of $I$. If $a\in S$ and $x\in I$, there exist $\alpha,\gamma\in Y$ such that $a\in B_{\alpha}$ and $x\in I_{\gamma}$, and so, since by the assumption $ \phi_{\gamma, \alpha\gamma}\left(x\right)\in I_{\alpha\gamma}$, we have that
\begin{align*}
    \lambda_a\left(x\right) 
    = -a + \phi_{\alpha, \alpha\gamma}\left(a\right)\circ \phi_{\gamma, \alpha\gamma}\left(x\right)
    = \lambda_{\phi_{\alpha,\alpha\gamma}\left(a\right)}\phi_{\gamma,\alpha\gamma}\left(x\right)
    \in I_{\alpha\gamma},
\end{align*}
hence $\lambda_a\left(x\right)\in I$.\\
Vice versa, if $I$ is an ideal of $(S,+, \circ)$, since $I$ is a dual weak brace too, it follows by \cref{theo_dual_skew} and  \cite[Exercises III.1.9(ii)]{Pe84} that  $I=[Y; I_{\alpha}; \psi_{\alpha, \beta}]$, with $I_\alpha=B_\alpha \cap I$, for every $\alpha \in Y$, $\psi_{\alpha,\beta} := {\phi_{\alpha,\beta}}_{|_{I_\alpha}}$, for all $\alpha \geq \beta$, such that $\phii{\alpha}{\beta}(I_\alpha)\subseteq I_\beta$, for any $\alpha > \beta$. To get the claim, we prove the $\lambda$-invariance of each $I_\alpha$, i.e., $I_\alpha$ is an ideal of the skew brace $B_\alpha$, for every $\alpha \in Y$. Indeed, we have that
   $ \lambda_a(x) \in B_\alpha \cap I=I_\alpha$,
  for all $a \in B_\alpha$ and $x \in I_\alpha$. Therefore, the claim follows.
 \end{proof}

\medskip

By \cref{teo_descr_ideali}, the ideals of any dual weak brace $S=[Y; B_{\alpha}; \phii{\alpha}{\beta}]$ are strong semilattices $Y$ of certain ideals $I_\alpha$ of each skew brace $B_\alpha$. However, the skew brace theory is not exhaustive for developing the theory of dual weak braces. In fact, for instance, if we consider the ideal $\Soc(S)$, in general, it is not the strong semilattice $Y$ of the socles $\Soc\left(B_\alpha\right)$ of $B_{\alpha}$.  Indeed, one clearly has that $\displaystyle{\Soc(S) \subseteq \bigcupdot_{\alpha \in Y} \ \Soc\left(B_\alpha\right)}$, but the other inclusion is not true in general. The dual weak brace in \cref{ex_C3_sym3} is such an example of this fact. In the next result, we show when they coincide.

\begin{prop}\label{soc-ideale}
  Let  $S=[Y; B_{\alpha}; \phii{\alpha}{\beta}]$ be a dual weak brace, $\psi_{\alpha,\beta} := {\phi_{\alpha,\beta}}_{|_{\Soc(B_\alpha)}}$, for all $\alpha \geq \beta$, and assume that $I:=[Y; \Soc\left(B_{\alpha}\right); \psi_{\alpha, \beta}]$ is an ideal of $S$. Then, $I=\Soc(S)$.\end{prop}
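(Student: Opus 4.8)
The plan is to prove the two inclusions separately, using the already-established inclusion $\Soc(S)\subseteq\bigcupdot_{\alpha\in Y}\Soc(B_\alpha)$ together with the structural description of $\Soc(S)$ as an ideal and the fact that $I=[Y;\Soc(B_\alpha);\psi_{\alpha,\beta}]$ is assumed to be an ideal. Since $I$ is an ideal of $S$, by \cref{teo_descr_ideali} it has the form $[Y;I_\alpha;\psi_{\alpha,\beta}]$ with $I_\alpha=\Soc(B_\alpha)$; in particular both $I$ and $\Soc(S)$ are full, $\lambda$-invariant normal subsemigroups of $(S,+)$ and of $(S,\circ)$. The heart of the matter is to show $I\subseteq\Soc(S)$, i.e.\ that every $a\in\Soc(B_\alpha)$ satisfies, for \emph{all} $b\in S$, both $a+b=a\circ b$ and $a+b=b+a$ — the subtlety being that $b$ need not lie in $B_\alpha$.

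First I would unwind the definitions of $+$, $\circ$, and $\lambda$ on the strong semilattice: for $a\in B_\alpha$ and $b\in B_\beta$ one has $a+b=\phi_{\alpha,\alpha\beta}(a)+\phi_{\beta,\alpha\beta}(b)$ and likewise for $\circ$, with both sides computed inside the skew brace $B_{\alpha\beta}$. Hence the conditions $a+b=a\circ b$ and $a+b=b+a$ for all $b\in S$ are equivalent to: for every $\beta\in Y$ and every $b'\in B_{\alpha\beta}$, the element $\phi_{\alpha,\alpha\beta}(a)$ commutes additively with $b'$ and satisfies $\phi_{\alpha,\alpha\beta}(a)+b'=\phi_{\alpha,\alpha\beta}(a)\circ b'$ in $B_{\alpha\beta}$. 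Since $\phi_{\alpha,\alpha\beta}$ is a skew brace homomorphism and, by hypothesis, $\phi_{\alpha,\alpha\beta}(\Soc(B_\alpha))\subseteq\Soc(B_{\alpha\beta})$ (this is exactly the content of the assumption that $I$ is well-defined as a strong semilattice, via $\psi_{\alpha,\beta}$), the image $\phi_{\alpha,\alpha\beta}(a)$ lies in $\Soc(B_{\alpha\beta})$, and therefore does satisfy both identities against every element of $B_{\alpha\beta}$. Ranging over all $\beta\in Y$ gives $a\in\Soc(S)$, hence $I\subseteq\Soc(S)$.

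For the reverse inclusion $\Soc(S)\subseteq I$: take $a\in\Soc(S)$. By the already-noted inclusion $\Soc(S)\subseteq\bigcupdot_{\alpha}\Soc(B_\alpha)$, there is $\alpha\in Y$ with $a\in\Soc(B_\alpha)$; but then $a\in B_\alpha\cap I$ (using $I_\alpha=\Soc(B_\alpha)$ and that $I$ collects exactly these elements), so $a\in I$. I would phrase this more carefully: $a\in\Soc(S)$ forces $a+b=a\circ b$ and $a+b=b+a$ for all $b\in S$; restricting to $b\in B_\alpha$ (so that $\alpha\beta=\alpha$ and all $\phi$'s act as the identity) shows $a$ satisfies the socle conditions inside $B_\alpha$, i.e.\ $a\in\Soc(B_\alpha)=I_\alpha\subseteq I$. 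Combining the two inclusions yields $\Soc(S)=I$.

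The main obstacle is the forward inclusion $I\subseteq\Soc(S)$, and specifically verifying that membership in $\Soc(B_\alpha)$ together with $\phi_{\alpha,\alpha\beta}$-invariance of socles genuinely propagates the socle identities to all of $S$ and not merely to $B_\alpha$; this is where the hypothesis ``$I$ is an ideal'' is actually used, since it is precisely that hypothesis which guarantees $\phi_{\alpha,\beta}(\Soc(B_\alpha))\subseteq\Soc(B_\beta)$ (otherwise $[Y;\Soc(B_\alpha);\psi_{\alpha,\beta}]$ would not even be a well-defined strong semilattice, let alone an ideal). Once that propagation is in hand, everything else is a routine translation between the strong-semilattice operations and the operations of the constituent skew braces, using \cref{theo_dual_skew} and \cref{teo_descr_ideali}.
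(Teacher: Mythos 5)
Your proposal is correct and follows essentially the same route as the paper: the key inclusion $I\subseteq\Soc(S)$ is obtained exactly as in the paper's proof, by noting that $\phi_{\alpha,\alpha\beta}(a)\in\Soc(B_{\alpha\beta})$ (a consequence of $I$ being an ideal, via \cref{teo_descr_ideali}) and then computing $a+b$, $a\circ b$, and $b+a$ inside $B_{\alpha\beta}$. The reverse inclusion is, as you say, just the already-recorded containment $\Soc(S)\subseteq\bigcupdot_{\alpha\in Y}\Soc(B_\alpha)$, which the paper leaves implicit.
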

  \begin{proof}
     At first, by \cref{teo_descr_ideali},  $\phii{\alpha}{\beta}(\Soc(B_\alpha))\subseteq \Soc(B_\beta)$, for any $\alpha > \beta$. Let $\alpha \in Y$ and $a \in \Soc(B_\alpha)$. Then, if $b \in S$, there exists $\beta \in Y$ such that $b \in B_\beta$, thus we obtain
     \begin{align*}
         a+b =
         \underbrace{\phii{\alpha}{\alpha\beta}(a)}_{\in\, \Soc(B_{\alpha\beta})}\ + \ \phii{\beta}{\alpha\beta}(b)
         = \phii{\alpha}{\alpha\beta}(a)\circ \phii{\beta}{\alpha\beta}(b)
         = a\circ b.
     \end{align*}
     Similarly, we get $a + b = b + a$, hence $a \in\Soc(S)$. Therefore, we get the claim.
  \end{proof}

\medskip

According to \cite[Theorem 21]{CaMaSt23}, ideals allow for obtaining quotient structures.  Into the specific, if $I$ is an ideal of a dual weak brace $(S,+,\circ)$, then the relation $\sim_{I}$ on $S$ given by
\begin{align*}
    \forall \ a,b\in S\qquad a\sim_I b  
    \ \iff \
    a^0 = b^0
     \ \  \text{and} \ \ -a + b\in I
\end{align*}
is an idempotent separating congruence of $(S,+, \circ)$.  Furthermore, $S/I:=S/\sim_I$ is a dual weak brace with semilattice of idempotents isomorphic to $\E\left(S\right)$.\\
As it is natural to expect, we can consider the canonical epimorphism $\chi: S\to S/I, \ a\mapsto a+I$ and give the usual homomorphism theorems.

\begin{theor}\label{teo_omo}
Let $\phi: S\rightarrow T$ be a homomorphism between two dual weak braces $(S,+,\circ)$ and $(T,+,\circ)$. Then, the following statements hold:
\begin{enumerate}
    \item $\im\,\phi$ is a dual weak sub-brace of $(T,+,\circ)$;
    \item $\ker\phi=\{a  \mid a \in S \quad \exists\, e \in \E(S) \quad \phi(a)=\phi(e)  \}$ is an ideal of $(S, +, \circ)$;
    \item there exists a monomorphism $\widetilde{\phi}:S/{\ker\phi}\rightarrow T$ of dual weak braces such that $\im\,\widetilde{\phi} = \im\,\phi$ and the diagram
    \begin{center}
  \begin{tikzcd}[column sep=50pt, row sep=3.5em]
S  \arrow[r, "\phi"] \arrow[d," \chi " swap ]& T  \\
S/{\ker\phi} \arrow[ur, "\widetilde{\phi}" swap] &
\end{tikzcd}      
    \end{center}
    commutes.
\end{enumerate}
\end{theor}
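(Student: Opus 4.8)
The statement is the standard first isomorphism theorem transported to the category of dual weak braces, so the plan is to verify the three assertions in order, leaning on the fact that a dual weak brace carries the structures of a Clifford semigroup under $+$ and under $\circ$ simultaneously, and that the notion of ideal in \cref{def:ideal-dwb} is precisely what is needed to form quotients (via \cite[Theorem 21]{CaMaSt23}).

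For (1), I would observe that $\phi$ being a homomorphism of both $(S,+)$ and $(S,\circ)$ forces $\im\,\phi$ to be closed under $+$, $\circ$, $-$, and ${}^-$ (using $\phi(-a)=-\phi(a)$ and $\phi(a^-)=\phi(a)^-$, which hold since semigroup homomorphisms between inverse semigroups preserve inverses); fullness $\E(T)\subseteq\im\,\phi$ comes from the fact that a homomorphism of inverse semigroups maps $\E(S)$ onto $\E(\im\,\phi)$, and every idempotent of $T$ lying in $\im\,\phi$ is hit — more precisely one should note that $\im\,\phi$ is itself an inverse subsemigroup containing all its own idempotents, and that the definition of weak sub-brace only asks $\E(\im\,\phi)\subseteq \im\,\phi$ once we regard $\im\,\phi$ as a dual weak brace in its own right. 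I would state this carefully: $\im\,\phi$ is an inverse subsemigroup of $(T,+)$ and of $(T,\circ)$ and inherits the two weak-brace axioms, hence is a dual weak sub-brace.

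For (2), set $K=\ker\phi=\{a\in S \mid \exists\, e\in\E(S)\ \phi(a)=\phi(e)\}$. I would first record the easy equivalence $a\in K \iff \phi(a)\in\E(T)$, using that $\phi(\E(S))\subseteq\E(T)$ and, conversely, if $\phi(a)$ is idempotent then $a^0=a-a$ satisfies $\phi(a^0)=\phi(a)-\phi(a)=\phi(a)$ since $\phi(a)+\phi(a)=\phi(a)$, so $e=a^0$ witnesses membership. With this reformulation the three ideal conditions become routine: $K$ is full because $\phi(e)\in\E(T)$ for $e\in\E(S)$; $K$ is closed under $+$, $\circ$, and inverses because $\E(T)$ is closed under $+$, $\circ$, and inverses and $\phi$ is a homomorphism; normality $-a+K+a\subseteq K$ and $a^-\circ K\circ a\subseteq K$ follow because $\phi(-a+k+a)=-\phi(a)+\phi(k)+\phi(a)$ is a conjugate of an idempotent, hence idempotent (conjugates of idempotents by elements of a Clifford semigroup are idempotent); and $\lambda$-invariance follows from $\phi(\lambda_a(k))=\lambda_{\phi(a)}(\phi(k))$ together with $\lambda_x(\E(T))\subseteq\E(T)$, which is recorded in Section 1. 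Thus $K$ satisfies all of \cref{def:ideal-dwb}.

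For (3), I would define $\widetilde\phi(a+K):=\phi(a)$ and check well-definedness against the congruence $\sim_K$: if $a\sim_K b$ then $a^0=b^0$ and $-a+b\in K$, so $\phi(-a+b)\in\E(T)$ and $\phi(a)^0=\phi(a^0)=\phi(b^0)=\phi(b)^0$; writing $\phi(b)=\phi(a)+(-\phi(a)+\phi(b))=\phi(a)+\phi(-a+b)$ and using that $\phi(-a+b)$ is an idempotent equal to $(-\phi(a)+\phi(b))$, whose natural partial order relation to $\phi(a)^0=\phi(b)^0$ forces $\phi(a)+\phi(-a+b)=\phi(a)$ (this is the one place a small Clifford-semigroup computation is needed — identifying $\phi(-a+b)$ with an idempotent below or equal to the common idempotent $\phi(a)^0$), we get $\phi(a)=\phi(b)$. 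Then $\widetilde\phi$ is a homomorphism of both operations since $\chi$ is surjective and $\phi=\widetilde\phi\circ\chi$; injectivity is exactly the statement that $\widetilde\phi(a+K)\in\E(T)$ implies $a\in K$ implies $a+K$ is idempotent in $S/K$, i.e.\ $\ker\widetilde\phi=\E(S/K)$, which by (2) applied to $\widetilde\phi$ means $\widetilde\phi$ is a monomorphism of dual weak braces; and $\im\,\widetilde\phi=\im\,\phi$ and commutativity of the triangle are immediate from $\phi=\widetilde\phi\circ\chi$.

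\textbf{Main obstacle.} The only genuinely non-formal point is the well-definedness of $\widetilde\phi$, i.e.\ showing that $a^0=b^0$ together with $\phi(-a+b)\in\E(T)$ already forces $\phi(a)=\phi(b)$; this requires knowing that in a Clifford semigroup adding an idempotent $e$ with $e\le a^0$ (in the natural partial order) leaves $a$ fixed, which in turn uses $e=e+a^0=a^0+e$ and the idempotent-separating nature of the congruence $\sim_K$. Everything else is bookkeeping with the two inverse-semigroup structures and the already-established facts that $\phi$ preserves $\lambda$ (so $\phi(\lambda_a(k))=\lambda_{\phi(a)}(\phi(k))$) and that $\E$ is preserved by homomorphisms and by $\lambda$-maps.
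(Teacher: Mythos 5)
The paper states \cref{teo_omo} without proof, so there is no argument of the authors' to compare yours against; judged on its own, your treatment of (1), (2), and the well-definedness, homomorphism and commutativity parts of (3) is essentially right. Two small remarks there. In (1), the paper's definition of weak sub-brace literally demands $\E(T)\subseteq H$, which $\im\,\phi$ need not satisfy (e.g.\ when $S$ is a skew brace and $T$ has several idempotents); your reinterpretation — that $\im\,\phi$ is a subset which is itself a dual weak brace — is the only tenable reading, but be aware it corrects the statement rather than proves it verbatim. In the well-definedness computation, the idempotent $\phi(-a+b)=-\phi(a)+\phi(b)$ lies in the group component of $\phi(b)$ and is therefore \emph{equal} to $\phi(a)^0=\phi(b)^0$; it is this equality (or, if you prefer, the inequality $\phi(-a+b)\geq\phi(a)^0$) that forces $\phi(a)+\phi(-a+b)=\phi(a)$. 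An idempotent merely \emph{below} $\phi(a)^0$, as you write, would give $\phi(a)+e\neq\phi(a)$ in general, so the reason you cite is the wrong one even though the conclusion is correct.

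The genuine gap is the injectivity claim in (3). You infer that $\widetilde{\phi}$ is a monomorphism from $\ker\widetilde{\phi}=\E\left(S/\ker\phi\right)$, but ``trivial kernel implies injective'' is a group-theoretic fact that fails for inverse semigroups: a congruence is determined by its kernel \emph{together with} its trace on the idempotents, and a homomorphism whose kernel is exactly the semilattice of idempotents is injective only if it additionally separates idempotents. Here $\sim_{\ker\phi}$ is idempotent-separating, so $\E\left(S/\ker\phi\right)\cong\E(S)$, while $\E\left(\im\,\phi\right)=\phi\left(\E(S)\right)$ can be strictly smaller; whenever $\phi$ is not injective on $\E(S)$, the map $\widetilde{\phi}$ identifies distinct idempotents of the quotient. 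Concretely, take $S=[Y;B_{\alpha};\phii{\alpha}{\beta}]$ as in \cref{ex_C3_sym3}, let $T=B_{\beta}=\Sym_3$, and let $\phi$ act as the identity on $B_{\beta}$ and as $\phii{\alpha}{\beta}$ on $B_{\alpha}$; this is a homomorphism of dual weak braces with $\ker\phi=\E(S)$, so $S/\ker\phi\cong S$ has $9$ elements while $\im\,\phi$ has $6$, and no injective map with the stated image can exist. Your argument (and the statement itself) therefore needs the additional hypothesis that $\phi$ is injective on $\E(S)$ — under which your proof does close, since $\phi(a)=\phi(b)$ then forces $a^0=b^0$ and hence $a\sim_{\ker\phi}b$ — or else ``monomorphism'' must be weakened to ``homomorphism whose kernel is $\E\left(S/\ker\phi\right)$''. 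As written, the final step does not follow.
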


\medskip

\begin{cor}
Let  $(S,+,\circ)$ be a dual weak brace, $I$ an ideal of $S$, and $H$ a dual weak sub-brace of $S$.  Then,
\begin{enumerate}
    \item $I+H$ is a dual weak sub-brace of $S$;
    \item $I$ is an ideal of $I+H$;
    \item $I \cap H$ is an ideal of $H$;
    \item $H/ {\left(I\cap H\right)}$ is isomorphic to $\left(I+H\right)/ I$.
\end{enumerate}
\end{cor}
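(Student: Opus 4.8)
The plan is to exploit \cref{teo_omo} by producing a suitable homomorphism whose image is $I+H$ and whose kernel is $I\cap H$, much as in the classical group-theoretic proof of the second isomorphism theorem. First I would settle item $1.$: since $I$ is an ideal and $H$ a dual weak sub-brace, it follows from the remark before \cref{teo_descr_ideali} that $I+H=I\circ H$, and one checks directly that this common set is closed under $+$, $\circ$ and under the two inversions, and contains $\E(S)$ (because $\E(S)\subseteq I$). The slightly delicate point here is closure: for $a,a'\in I$ and $b,b'\in H$ one must rewrite $(a+b)+(a'+b')$ as an element of $I+H$ using $\lambda$-invariance of $I$ and \cref{prop_weak}; concretely $b+a'=b\circ\lambda_{b^-}(a')$ and $\lambda_{b^-}(a')\in I$ by \cref{def:ideal-dwb}-$2.$, which lets one collect the two $I$-parts together. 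Item $2.$ is then immediate: $I$ is full in $I+H$ since $\E(S)\subseteq I$, it is a normal subsemigroup of both $(I+H,+)$ and $(I+H,\circ)$ because it already is in $S$, and it is $\lambda$-invariant in $S$ hence a fortiori in the sub-brace $I+H$; so $I$ satisfies \cref{def:ideal-dwb} inside $I+H$.

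For items $3.$ and $4.$ I would consider the composite map $\phi\colon H\hookrightarrow I+H \xrightarrow{\ \chi\ } (I+H)/I$, where $\chi$ is the canonical epimorphism onto the quotient by the ideal $I$ (available by the discussion preceding \cref{teo_omo}, since $I$ is an ideal of $I+H$ by item $2.$). This $\phi$ is a homomorphism of dual weak braces. The two things to verify are that $\phi$ is surjective and that $\ker\phi=I\cap H$ in the sense of \cref{teo_omo}-$2.$. Surjectivity follows because every element of $I+H=I\circ H$ has the form $a\circ b$ with $a\in I$, $b\in H$, and modulo $I$ one has $a\circ b\sim_I$ something in the image of $H$ — here I must be a little careful, using that $a\circ b$ and $b$ have the same idempotent only after adjusting, so the clean way is: $\chi(a\circ b)=\chi(a)\circ\chi(b)$ and $\chi(a)\in\E\big((I+H)/I\big)$ because $a\in I$ and the congruence $\sim_I$ identifies $a$ with $a^0$; since $(I+H)/I$ is a dual weak brace, multiplying by an idempotent on the left is a $\lambda$-action of an idempotent, so $\chi(a)\circ\chi(b)=\chi(b^0)\circ\chi(b)$-type adjustments keep us inside $\chi(H)$. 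Thus $\im\phi=\chi(H)=(I+H)/I$.

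For the kernel, by \cref{teo_omo}-$2.$ we have $\ker\phi=\{b\in H\mid \exists\, e\in\E(H)\ \ \chi(b)=\chi(e)\}$, and $\chi(b)=\chi(e)$ means $b\sim_I e$, i.e. $b^0=e^0=e$ and $-e+b\in I$; the second condition together with $e=b^0$ gives $b=b^0+b=e+b\in I$ as well (using \eqref{eq:idem+circ}), so $b\in I\cap H$. Conversely if $b\in I\cap H$ then $b\sim_I b^0$ since $-b^0+b=b\in I$ and $b^0=(b^0)^0$, so $b\in\ker\phi$; hence $\ker\phi=I\cap H$. In particular item $3.$ follows, $I\cap H$ being a kernel is an ideal of $H$ by \cref{teo_omo}-$2.$, and then \cref{teo_omo}-$3.$ yields a monomorphism $\widetilde\phi\colon H/(I\cap H)\to (I+H)/I$ with image $\im\phi=(I+H)/I$, i.e. an isomorphism, which is item $4.$. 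The main obstacle I anticipate is the bookkeeping in the surjectivity argument and in showing $I+H=I\circ H$ is closed: the non-group nature of $(S,+)$ and $(S,\circ)$ as mere inverse semigroups means one cannot simply "cancel", and every manipulation must be routed through the $\lambda$-maps and the identities of \cref{prop_weak} and \eqref{eq:idem+circ}, keeping track of the idempotent $b^0$ attached to each element.
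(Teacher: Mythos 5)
The paper states this corollary without proof, as a direct consequence of \cref{teo_omo}, and your architecture --- factor the inclusion of $H$ into $I+H$ through the canonical epimorphism $\chi\colon I+H\to (I+H)/I$, then compute image and kernel --- is exactly the intended route. Your kernel computation is correct and complete: $b\sim_I e$ forces $e=b^0$ and then $-e+b=b^0+b=b\in I$, so $\ker\phi=I\cap H$, and items $3.$ and $4.$ follow from \cref{teo_omo}. The surjectivity argument is also essentially right once tidied: for $a\in I$, $b\in H$ one has $a+b\sim_I a^0+b\in H$, since the idempotents agree and $-(a+b)+(a^0+b)=-b+(-a)+b\in I$ by additive normality of $I$.

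Two concrete steps need repair. First, the identity you propose for additive closure of $I+H$ is circular: $b\circ\lambda_{b^-}(a')=b+\lambda_b\lambda_{b^-}(a')=b+\lambda_{b^0}(a')=b+b^0+a'=b+a'$, so it returns you to your starting point and collects nothing. What you actually need is condition $3.$ of additive normality in \cref{def:ideal-dwb}: $-b+a'+b\in I$, hence $b+a'=(b+a'-b)+b$ with the parenthesized term in $I$, and then $(a+b)+(a'+b')=\bigl(a+(b+a'-b)\bigr)+(b+b')\in I+H$. Second, the equality $I+H=I\circ H$ does not follow from the remark before \cref{teo_descr_ideali}, which concerns two \emph{ideals}; here $H$ is only a sub-brace. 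The inclusion $I\circ H\subseteq I+H$ is cheap, via $a\circ b=a+a\cdot b+b$ (\cref{prop:cdot-circ}) and $a\cdot b\in I\cdot S\subseteq I$ (\cref{lemma-ideali}); the reverse inclusion, which you need for closure of $I+H$ under $\circ$, requires its own argument --- for instance, set $i:=(a+b)\circ b^-$, check $i\circ b=(a+b)\circ b^0=a+b$, and see that $i\in I$ because $\chi(i)=\chi(a^0+b)\circ\chi(b)^-=\chi(a^0\circ b^0)$ is idempotent in $S/I$, so $i\in\ker\chi=I$. With these two fixes the plan goes through.
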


\medskip

\begin{cor}
 Let $(S, +, \circ)$ be a dual weak brace and $I, J$ ideals of $S$ such that $J \subset I$. Then, $S/I$ is isomorphic to $\left(S/J\right)/\left(I/J\right)$.
\end{cor}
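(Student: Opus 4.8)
The plan is to deduce this third isomorphism theorem for dual weak braces from the previously established machinery, namely \cref{teo_omo} together with the correspondence between ideals of $S$ containing $J$ and ideals of $S/J$. First I would verify that $I/J$ is indeed an ideal of $S/J$: since $J \subset I$ are ideals of $S$, the image $\chi(I)$ under the canonical epimorphism $\chi: S \to S/J$ is a dual weak sub-brace of $S/J$ (by part (1) of \cref{teo_omo}), and one checks directly that $\chi(I) = I/J$ is closed under the required operations and is normal in both $(S/J, +)$ and $(S/J, \circ)$ and $\lambda$-invariant --- each of these follows by lifting representatives along $\chi$ and using that $I$ has the corresponding property in $S$. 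So the statement $(S/J)/(I/J)$ makes sense.

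Next I would construct the natural map. Consider the composite $S \xrightarrow{\chi_J} S/J \xrightarrow{\chi_{I/J}} (S/J)/(I/J)$, where $\chi_J$ and $\chi_{I/J}$ are the canonical epimorphisms. This is a surjective homomorphism of dual weak braces; call it $\phi$. By part (3) of \cref{teo_omo}, $\phi$ factors through $S/\ker\phi$ as a monomorphism with image all of $(S/J)/(I/J)$, hence an isomorphism $S/\ker\phi \cong (S/J)/(I/J)$. It therefore remains only to identify $\ker\phi$ with $I$. By part (2) of \cref{teo_omo}, $\ker\phi = \{a \in S \mid \exists e \in \E(S),\ \phi(a) = \phi(e)\}$. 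Now $\phi(a) = \phi(e)$ for some idempotent $e$ exactly when $\chi_J(a)$ and $\chi_J(e)$ become $\sim_{I/J}$-related in $S/J$; unwinding the definition of $\sim_{I/J}$ and of the congruence defining $S/J$, and using that $\chi_J(e) \in \E(S/J)$ together with the fact that $a^0$ is the unique idempotent $\sim_J$-related data, this reduces to $\chi_J(a) \in I/J$, i.e. $a + J \in I/J$, i.e. $a \in I$ (here using $J \subseteq I$ so that $\chi_J^{-1}(I/J) = I$). Assembling these gives $\ker\phi = I$ and hence the claimed isomorphism.

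The main obstacle I anticipate is the bookkeeping in identifying $\ker\phi$ with $I$: the kernel description in \cref{teo_omo}-(2) is phrased via the existence of an idempotent $e$ with $\phi(a) = \phi(e)$ rather than as a literal preimage of idempotents, so one must carefully track how the congruence $\sim_J$ (which couples the condition $a^0 = b^0$ with membership $-a + b \in J$) interacts with $\sim_{I/J}$. Concretely, the subtlety is that elements of $S/J$ are $\sim_J$-classes, idempotents of $S/J$ correspond to idempotents of $S$ via the isomorphism $\E(S/J) \cong \E(S)$, and one needs that $\phi(a) = \phi(e)$ forces $a^0 = e^0 = e$ at the level of $S$ (so the idempotent is canonically $a^0$) before concluding $-a + a^0 \in I$, i.e. $\lambda_a(a^-) + a^0 \cdots$ --- in any case $a \in I$. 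Once this identification is made explicit, the rest is a direct application of the homomorphism theorem and requires no further computation.
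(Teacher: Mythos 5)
The paper states this corollary without proof, as a direct consequence of \cref{teo_omo}; your argument --- composing the two canonical epimorphisms, identifying the kernel of the composite with $I$ via the idempotent-separating property of $\sim_J$ and the saturation $\chi_J^{-1}(I/J)=I$, and then applying the homomorphism theorem --- is exactly the intended route and is correct. No issues.
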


\medskip

\begin{cor}
  Let $I$ be an ideal of a dual weak brace $(S,+, \circ)$. There is a one-to-one correspondence between the set of ideals of $S$ containing $I$ and the set of ideals of $S/I$. Moreover, ideals of $S$ containing $I$ correspond to ideals of $S/I$.
\end{cor}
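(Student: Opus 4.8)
The final statement is the Correspondence Theorem (fourth Corollary in the Ideals section): for an ideal $I$ of a dual weak brace $(S,+,\circ)$, there is a one-to-one correspondence between ideals of $S$ containing $I$ and ideals of $S/I$.

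The plan is to use the canonical epimorphism $\chi\colon S\to S/I$, $a\mapsto a+I$, and show that $J\mapsto \chi(J)=J/I$ and $K\mapsto \chi^{-1}(K)$ are mutually inverse, inclusion-preserving bijections between $\{\text{ideals } J \text{ of } S : I\subseteq J\}$ and $\{\text{ideals of } S/I\}$. First I would record that $\chi$ is a surjective homomorphism of dual weak braces whose kernel (in the sense of \cref{teo_omo}-2., i.e. $\{a : \exists e\in\E(S),\ \chi(a)=\chi(e)\}$) is exactly $I$: indeed $a\in\ker\chi$ iff $a+I = e+I$ for some $e\in\E(S)$, iff $a^0\in I$ (automatic, since $\E(S)\subseteq I$) and $-a+e\in I$, i.e. $-a\in I$, i.e. $a\in I$ since $I$ is closed under additive inverses.

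Then the argument splits into three routine verifications. (a) \emph{Well-definedness of the two maps.} If $J$ is an ideal of $S$ with $I\subseteq J$, then $\chi(J)$ is an ideal of $S/I$: surjectivity of $\chi$ makes the image of a normal subsemigroup of $(S,+)$ (resp. $(S,\circ)$) a normal subsemigroup of the quotient, and $\lambda$-invariance passes through because $\chi$ intertwines the $\lambda$-maps (this is part of being a homomorphism of dual weak braces; cf. the defining identity $\lambda_a(b) = -a+a\circ b$). Conversely, if $K$ is an ideal of $S/I$, then $\chi^{-1}(K)$ is an ideal of $S$ containing $I=\chi^{-1}(\E(S/I))\subseteq\chi^{-1}(K)$ since $\E(S/I)\subseteq K$ and $\E(S)\subseteq\chi^{-1}(\E(S/I))$; preimages of normal subsemigroups under a homomorphism are normal subsemigroups, and $\lambda$-invariance is preserved by $\chi^{-1}$ likewise since $\chi(\lambda_a(x)) = \lambda_{\chi(a)}(\chi(x))$. (b) \emph{$\chi\chi^{-1} = \id$ on ideals of $S/I$:} immediate from surjectivity of $\chi$. (c) \emph{$\chi^{-1}\chi = \id$ on ideals $J\supseteq I$:} one always has $J\subseteq \chi^{-1}(\chi(J))$, and for the reverse inclusion, if $\chi(a)\in\chi(J)$ then $a+I = b+I$ for some $b\in J$, so $a^0=b^0$ and $-b+a\in I\subseteq J$; hence $a = b + (-b+a)$ lies in the subsemigroup $J$ of $(S,+)$ (here one uses Clifford-ness to reconstruct $a$ from $b$ and $-b+a$, noting $a^0=b^0$). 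Monotonicity of both maps under inclusion is clear. The final sentence of the statement (''ideals of $S$ containing $I$ correspond to ideals of $S/I$'') is just a restatement and needs no extra work; alternatively one may note the correspondence could be upgraded to preserve the ideal property in both directions, which is exactly what (a) establishes.

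The main obstacle is purely bookkeeping rather than conceptual: one must carefully check that the three conditions of \cref{def:ideal-dwb} — being a full normal subsemigroup additively, $\lambda$-invariance, and being a normal subsemigroup multiplicatively — are each preserved under direct and inverse images along $\chi$. The only place demanding genuine (if minor) care is step (c), where the inverse-semigroup subtlety appears: membership of $a$ in $J$ must be deduced from $b\in J$ and $-b+a\in I\subseteq J$ together with $a^0=b^0$, using that $J$ is closed under $+$ and inverses and that $b+(-b+a) = a$ precisely because the idempotents $a^0,b^0$ agree (so no idempotent is lost in the sum). Everything else follows formally from \cref{teo_omo} and the fact, established above, that $\ker\chi = I$.
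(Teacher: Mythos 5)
Your argument is correct and follows exactly the route the paper intends: the corollary is stated without proof as a consequence of the homomorphism theorem (\cref{teo_omo}), via the canonical epimorphism $\chi\colon S\to S/I$ with $\ker\chi=I$ and the mutually inverse maps $J\mapsto\chi(J)$, $K\mapsto\chi^{-1}(K)$. Your step (c), recovering $a=b^0+a=b+(-b+a)\in J$ from $a^0=b^0$ and $-b+a\in I\subseteq J$, is precisely the one inverse-semigroup subtlety worth spelling out, and you handle it correctly.
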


\bigskip


\section{Left ideals of dual weak braces}
In this section, we provide a characterization of ideals of any dual weak brace which makes use of the concept of left ideals and of the operation $\cdot$.

\medskip

Let us start by introducing the notions of the left ideal and strong left ideal of a dual weak brace, consistently with \cite{CeSmVe19} and \cite[Definition 2.3]{JeKuVaVe19}.
\begin{defin}
Let $(S,+,\circ)$ be a dual weak brace. Then, a subset $I$ of $S$ is a \emph{left ideal} of $(S,+,\circ)$ if 
\begin{enumerate}
\item $I$ is a full inverse subsemigroup of $(S,+)$,
    \item $\lambda_a(I)\subseteq I$, for every $a\in S$.
\end{enumerate}
A left ideal $I$ is a \emph{strong left ideal} if $I$ is a normal subsemigroup of $(S,+)$.
\end{defin}
\noindent  Note that any left ideal $I$ of a dual weak brace $S$ is a full inverse subsemigroup of $\left(S,\circ\right)$. Indeed, by \cref{prop_weak}, if $a,b\in I$, then
$a\circ b = a + \lambda_a\left(b\right)\in I$ and  $a^- = \lambda_{a^-}\left(-a\right)\in I$. 

\medskip

Similarly to \cref{teo_descr_ideali}, we can describe the structure of any strong left ideal, as follows.
\begin{prop}
     Let $S=[Y; B_{\alpha}; \phii{\alpha}{\beta}]$ be a dual weak brace, $I_\alpha$ a strong left ideal of each skew brace $B_\alpha$, and set $\psi_{\alpha,\beta} := {\phi_{\alpha,\beta}}_{|_{I_\alpha}}$, for all $\alpha \geq \beta$. If $\phii{\alpha}{\beta}(I_\alpha)\subseteq I_\beta$, for any $\alpha > \beta$, then $I=[Y; I_{\alpha}; \psi_{\alpha, \beta}]$ is a strong left ideal of $S$ and, conversely, every strong left ideal of $S$ is of this form.
\end{prop}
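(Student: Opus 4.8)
The plan is to mimic the proof of \cref{teo_descr_ideali} almost verbatim, replacing ``ideal'' by ``strong left ideal'' throughout, and adjusting only the points where normality in $(S,\circ)$ and $\lambda$-invariance play different roles. Recall that a strong left ideal of a skew brace $B_\alpha$ is a normal subsemigroup (i.e.\ a full, $+$-inverse-closed, $+$-normal subset) of $(B_\alpha,+)$ that is $\lambda$-invariant; and a strong left ideal of the dual weak brace $S$ is, by definition, a normal subsemigroup of $(S,+)$ that is $\lambda$-invariant. So the key difference from \cref{teo_descr_ideali} is that we no longer need to worry about normality with respect to $\circ$: once we know $I$ is a normal subsemigroup of $(S,+)$ and is $\lambda$-invariant, we are done, and the observation right before the statement (any left ideal is automatically a full inverse subsemigroup of $(S,\circ)$) is already available if needed.

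First I would prove the sufficient condition. Assume $I_\alpha$ is a strong left ideal of $B_\alpha$ for each $\alpha$ and $\phi_{\alpha,\beta}(I_\alpha)\subseteq I_\beta$ whenever $\alpha>\beta$, and set $I=[Y;I_\alpha;\psi_{\alpha,\beta}]$. The fact that $I$ is a normal subsemigroup of $(S,+)$ follows exactly as in \cref{teo_descr_ideali}: this is precisely the characterization of normal subsemigroups of a Clifford semigroup as strong semilattices of normal subgroups of the $+$-components, compatible with the connecting morphisms — see \cite[Exercises III.1.9(ii)]{Pe84}, which is the same citation used there. For $\lambda$-invariance I would copy the displayed computation of \cref{teo_descr_ideali}: given $a\in B_\alpha$ and $x\in I_\gamma$, since $\phi_{\gamma,\alpha\gamma}(x)\in I_{\alpha\gamma}$ by hypothesis, one has
\begin{align*}
\lambda_a(x)=-a+\phi_{\alpha,\alpha\gamma}(a)\circ\phi_{\gamma,\alpha\gamma}(x)=\lambda_{\phi_{\alpha,\alpha\gamma}(a)}\phi_{\gamma,\alpha\gamma}(x)\in I_{\alpha\gamma},
\end{align*}
using that $\lambda_{\phi_{\alpha,\alpha\gamma}(a)}$ maps $I_{\alpha\gamma}$ into itself because $I_{\alpha\gamma}$ is a left ideal of $B_{\alpha\gamma}$ and $\phi_{\alpha,\alpha\gamma}(a)\in B_{\alpha\gamma}$. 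Hence $\lambda_a(x)\in I$, so $I$ is a strong left ideal of $S$.

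Conversely, suppose $I$ is a strong left ideal of $S$. Set $I_\alpha:=B_\alpha\cap I$ for each $\alpha$ and $\psi_{\alpha,\beta}:={\phi_{\alpha,\beta}}_{|_{I_\alpha}}$. Since $I$ is a normal subsemigroup of $(S,+)$, by \cite[Exercises III.1.9(ii)]{Pe84} it is the strong semilattice $[Y;I_\alpha;\psi_{\alpha,\beta}]$ of the normal subgroups $I_\alpha$ of $(B_\alpha,+)$, with $\phi_{\alpha,\beta}(I_\alpha)\subseteq I_\beta$ for $\alpha>\beta$; in particular each $I_\alpha$ is full and $+$-inverse-closed in $B_\alpha$. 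It remains to check $\lambda$-invariance of each $I_\alpha$ as a subset of $B_\alpha$: for $a\in B_\alpha$ and $x\in I_\alpha$ we have $\lambda_a(x)\in I$ (because $I$ is $\lambda$-invariant in $S$) and $\lambda_a(x)\in B_\alpha$ (since $\lambda_a$ restricted to $B_\alpha$ is the map $\lambda_a$ of the skew brace $B_\alpha$, which sends $B_\alpha$ to itself), whence $\lambda_a(x)\in B_\alpha\cap I=I_\alpha$. Thus each $I_\alpha$ is a strong left ideal of $B_\alpha$, and $I$ has the claimed form.

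I do not expect a genuine obstacle here; the proof is essentially a transcription of \cref{teo_descr_ideali} with the role of condition~$3$ of \cref{def:ideal-dwb} deleted. The one point to be slightly careful about is making sure that ``$I$ is a normal subsemigroup of $(S,+)$ iff $I=[Y;I_\alpha;\psi_{\alpha,\beta}]$ with each $I_\alpha$ normal in $(B_\alpha,+)$ and $\phi_{\alpha,\beta}(I_\alpha)\subseteq I_\beta$'' is genuinely what \cite[Exercises III.1.9(ii)]{Pe84} gives (it is the same fact already invoked in \cref{teo_descr_ideali}), and that the $\lambda$ appearing at the level of $S$ restricts correctly to the $\lambda$ of each skew brace component — which is immediate from the formula $\lambda_a(b)=-a+a\circ b$ together with the definitions of $+$ and $\circ$ on $S$ when $a,b$ lie in the same $B_\alpha$.
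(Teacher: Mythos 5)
Your proposal is correct and matches the paper's intent exactly: the paper states this proposition without proof, remarking only that it is proved ``similarly to'' \cref{teo_descr_ideali}, and your argument is precisely that adaptation --- the same appeal to \cite[Exercises III.1.9(ii)]{Pe84} for the additive normal-subsemigroup structure and the same displayed computation for $\lambda$-invariance, with condition~$3$ of \cref{def:ideal-dwb} correctly discarded. No gaps.
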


\smallskip

\begin{ex}
Let $(S, +, \circ)$ be a dual weak brace. Every full endomorphism-invariant subsemigroup $I$ of $(S,+)$, i.e., $\varphi(I) \subseteq I$, for every $\varphi \in \End(S, +)$, is a strong left ideal (cf. \cite[cf. Example 2.4]{JeKuVaVe19}).
\end{ex}
\smallskip

In skew braces theory \cite{CeSmVe19}, a known left ideal of a skew brace $(B, +, \circ)$ is $\Fix(B)$, that is the set of the elements of $B$ that are fixed by the map $\lambda_a$, for every $a \in B$. For a dual weak brace $S$, it can be defined in a similar way, as we show below.
\begin{prop}
Let $(S,+, \circ)$ be a dual weak brace. Then, the following set
\begin{align*}
    \Fix(S):&=\{b  \ | \ b \in S, \, \,\forall \ a \in S \quad a+b=a \circ b\}\\
    &=\{b  \ | \ b \in S, \, \,\forall \ a \in S \quad \lambda_a(b)=a^0+b\}
\end{align*}
is a left ideal of $S$.\end{prop}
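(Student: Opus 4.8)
The plan is to verify the two defining conditions of a left ideal directly for $\Fix(S)$, after first checking that the two displayed descriptions of $\Fix(S)$ agree. For the equivalence of the two sets, observe that $a + b = a\circ b$ is equivalent to $\lambda_a(b) = -a + a\circ b + b^0 = -a + a + b + b^0 = a^0 + b$ via \eqref{eq:lambda-cdot} together with \eqref{eq:idem+circ}; more precisely, $a+b = a\circ b$ holds if and only if $a\cdot b = a^0 + b^0$ by \cref{idemp.puntino}, and $a\cdot b = a^0 + b^0$ is equivalent to $\lambda_a(b) = a\cdot b + b = a^0 + b^0 + b = a^0 + b$ by \eqref{eq:lambda-cdot} and \eqref{eq:idem+circ}. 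So the two set-builder expressions describe the same subset of $S$, and I will use whichever description is more convenient in each step.

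Next I would check that $\Fix(S)$ is a full inverse subsemigroup of $(S,+)$. Fullness: every $e \in \E(S)$ lies in $\Fix(S)$ because $a + e = a\circ e$ for all $a \in S$ by \eqref{eq:idem+circ} (indeed $\rho_e(a) = a\circ e$ and $\lambda_e$-type identities from \cref{pro_dual_idemp} give the needed symmetry, but the cleanest route is: $a + e = a^0 + a + e$, and since $e$ is idempotent and central, $a\circ e = a^0 + a\circ e$, and one computes both equal). Closure under $+$: if $b_1, b_2 \in \Fix(S)$ and $a \in S$, then $\lambda_a(b_1 + b_2) = \lambda_a(b_1) + \lambda_a(b_2) = (a^0 + b_1) + (a^0 + b_2) = a^0 + b_1 + b_2$ using that $\lambda_a$ is an additive endomorphism and that idempotents are central; hence $b_1 + b_2 \in \Fix(S)$. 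Closure under additive inverses: if $b \in \Fix(S)$, then from $\lambda_a(b) = a^0 + b$ and the fact that $\lambda_a$ restricted to the relevant group component is a homomorphism, $\lambda_a(-b) = -\lambda_a(b) + (\text{idempotent correction}) = a^0 + (-b)$; concretely, $\lambda_a(-b) = \lambda_a(b)^{-} \circ \lambda_a(b^0)$-style bookkeeping, but the direct computation $\lambda_a(-b) = -a + a\circ(-b) + b^0 = -a + (a - a\circ b + a) + b^0$ by \cref{prop_weak}-2., which rearranges to $a^0 + (-b)$ using \eqref{eq:idem+circ}, finishes it. So $-b \in \Fix(S)$.

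Finally, for $\lambda$-invariance, let $b \in \Fix(S)$ and $c \in S$; I must show $\lambda_c(b) \in \Fix(S)$, i.e. $a + \lambda_c(b) = a \circ \lambda_c(b)$ for every $a \in S$. Since $\lambda$ is a homomorphism of $(S,\circ)$ into $\End(S,+)$, we have $\lambda_a \lambda_c = \lambda_{a\circ c}$, so $\lambda_a(\lambda_c(b)) = \lambda_{a\circ c}(b) = (a\circ c)^0 + b$ because $b \in \Fix(S)$. On the other hand $(a\circ c)^0 = a^0 + c^0$ (idempotent of $a\circ c$ in the Clifford semigroup $(S,\circ)$), and one computes $a^0 + \lambda_c(b) = a^0 + c^0 + b$ as well, using $\lambda_c(b) = c^0 + b$ and centrality of idempotents. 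Comparing, $\lambda_a(\lambda_c(b)) = a^0 + \lambda_c(b)$, which by the equivalence established at the outset says exactly $a + \lambda_c(b) = a \circ \lambda_c(b)$. Hence $\lambda_c(b) \in \Fix(S)$, and $\Fix(S)$ is a left ideal.

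The main obstacle is the bookkeeping with idempotents: because $(S,+)$ is only a Clifford semigroup and not a group, every "cancellation" must be replaced by adding back the appropriate idempotent $a^0$, $b^0$, or $(a\circ c)^0$, and one has to be careful that the identities $\lambda_a(b) = a\cdot b + b$, $\lambda_a$ additive, and $\lambda_a\lambda_c = \lambda_{a\circ c}$ are all used in the idempotent-aware form. None of the individual steps is deep, but keeping the idempotent corrections consistent — in particular checking that $b \in \Fix(S)$ really forces $\lambda_a(b) = a^0 + b$ on the nose rather than only up to an idempotent — is where care is needed.
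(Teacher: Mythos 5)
Your proposal is correct and follows essentially the same route as the paper: fullness via \eqref{eq:idem+circ}, closure under additive inverses via \cref{prop_weak}-2., and $\lambda$-invariance via the homomorphism property $\lambda_{a\circ c}=\lambda_a\lambda_c$ together with the idempotent identity $(a\circ c)^0=a^0+c^0$ (the paper phrases this last step as the direct chain $c+\lambda_a(b)=c+\lambda_{c\circ a}(b)=c\circ\lambda_a(b)$, which is the same computation unwound). The only differences are presentational: you verify explicitly the equivalence of the two set descriptions and the closure under $+$, which the paper treats as immediate.
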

\begin{proof}
Initially, by \eqref{eq:idem+circ}, $E(S) \subseteq \Fix(S)$. Moreover, $(\Fix(S), +)$ is trivially closed with respect to $+$ and, by \cref{prop_weak}-$2.$, $
    a \circ (-b)=a-a\circ b+a=a-b+a^0=a-b$,
for all $a \in S$ and $b \in \Fix(S)$. Finally, if $a,c \in S$ and $b \in \Fix(S)$, it follows that 
\begin{align*}
  c +\lambda_a(b)&=c+c^0+a^0+b &\mbox{$b\in \Fix(S)$}\\
  &=c+c^0\circ a^0+b
  &\mbox{by \eqref{eq:idem+circ}}\\
  &=c-c\circ a+c\circ a+b\\
  &=c-c\circ a+c\circ a\circ b &\mbox{$b\in \Fix(S)$}\\
  &=c+\lambda_{c \circ a}(b)\\
  &=c \circ \lambda_a(b) &\mbox{by \cref{prop_weak}-$3.$}
\end{align*}
Therefore, $\Fix(S)$ is a left ideal of $S$.
\end{proof}

\medskip

\begin{defin}
If $(S, +, \circ)$ is a dual weak brace, then  we call the set $Zl(S):=\Fix(S) \cap \zeta(S,+)$ the \emph{left center} (or \emph{right annihilator}) of $S$. 
\end{defin}
\noindent It is easy to check that $Zl(S)$ is a strong left ideal of any dual weak brace $S$. In general, $Zl(S)$ is not an ideal as shown in the context of skew braces in \cite[Example 5.7]{BaGu21}.

\medskip
In the next, if $X$ and $Y$ are subsets of a dual weak brace $(S, +, \circ)$, we denote by $X \cdot Y$ the additive inverse subsemigroup of $S$ generated by the elements of the form $x \cdot y$, with $x \in X$ and $y \in Y$ (cf. \cite[Definition II.1.11]{Pe84}).

\begin{prop}\label{lemma-ideali}
Let $(S,+, \circ)$ be a dual weak brace. 
Then, the following hold:
\begin{enumerate}
    \item  a full inverse subsemigroup $I$ of $\left(S,+\right)$ is a left ideal of $S$ if and only if $S \cdot I \subseteq I$;
    \item if $I$ is an ideal of $S$, then  $S\cdot I\subseteq I$ and $I\cdot S\subseteq I$;
    \item a normal subsemigroup $I$ of $\left(S,+\right)$ is an ideal of $S$ if and only if $I\cdot S\subseteq I$ and $\lambda_a\left(I\right)\subseteq I$, for every $a\in S$.
\end{enumerate}\end{prop}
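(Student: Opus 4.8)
The plan is to prove the three statements in order, using the operation $\cdot$ and the identity \eqref{eq:lambda-cdot}, namely $\lambda_a(b) = a\cdot b + b$, together with the definitions of left ideal and ideal. The recurring idea is that, since $I$ is a full inverse subsemigroup of $(S,+)$ in each case, membership $\lambda_a(b)\in I$ is equivalent to $a\cdot b\in I$, because $\lambda_a(b)$ and $a\cdot b$ differ by $b\in I$ (one adds $b$ and uses closure and the existence of inverses in $I$, observing also that $\E(S)\subseteq I$ so that $b^0\in I$).

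For (1), suppose first that $I$ is a left ideal, so $\lambda_a(I)\subseteq I$ for all $a\in S$. Then for $x\in S$ and $y\in I$, by \eqref{eq:lambda-cdot} we have $x\cdot y = \lambda_x(y) - y$, and since $\lambda_x(y)\in I$, $y\in I$, and $I$ is closed under $+$ and additive inverses, $x\cdot y\in I$; hence the generating elements of $S\cdot I$ lie in $I$, and as $I$ is an additive inverse subsemigroup, $S\cdot I\subseteq I$. Conversely, if $S\cdot I\subseteq I$, then for $a\in S$ and $x\in I$ we get $\lambda_a(x) = a\cdot x + x\in I$ since $a\cdot x\in I$ and $x\in I$; together with the hypothesis that $I$ is a full inverse subsemigroup of $(S,+)$, this gives that $I$ is a left ideal.

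For (2), if $I$ is an ideal, then in particular $I$ is a left ideal (it is $\lambda$-invariant and a full inverse subsemigroup of $(S,+)$, since every normal subsemigroup is such), so $S\cdot I\subseteq I$ by (1). For $I\cdot S\subseteq I$, take $x\in I$, $a\in S$: we compute $x\cdot a = -x + x\circ a - a$. Using $x\circ a = x + \lambda_x(a)$ from \cref{prop_weak}-$3.$ gives $x\cdot a = \lambda_x(a) - a$; but this is the wrong side, so instead I would use that $I$ is a normal subsemigroup of $(S,\circ)$: write $x\cdot a = -x + x\circ a - a$ and note $x\circ a = x\circ a \circ a^- \circ a = \dots$; more directly, rewrite $x\cdot a + a = -x + x\circ a = \lambda_x(a)\circ a^0$-type manipulation — the clean route is $x\cdot a = -x + x\circ a - a$ and observe $-x\in I$, while $x\circ a - a = x\circ a + (-a)$, and $-x + (x\circ a) \in I$ requires $x\circ a$ to be handled via normality: since $x\in I$ and $a^-\circ x\circ a\in I$ by condition (3) of normal subsemigroup applied to $(S,\circ)$, one has $x\circ a = a\circ(a^-\circ x\circ a)\circ a^0$, and combining additively with $-x$ and $-a$ using $\lambda$-invariance and additive closure yields $x\cdot a\in I$. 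I then extend from generators to $I\cdot S$ by additive closure. The main obstacle is exactly this step: expressing $x\cdot a$ for $x\in I$, $a\in S$ purely in terms of operations that visibly stay inside $I$; the trick is to feed $\circ$-normality ($a^-\circ I\circ a\subseteq I$) into the formula $x\cdot a = -x + x\circ a - a$ rather than the $\lambda$-form.

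For (3), one direction is immediate: if $I$ is an ideal then $I\cdot S\subseteq I$ by (2) and $\lambda_a(I)\subseteq I$ by definition. Conversely, assume $I$ is a normal subsemigroup of $(S,+)$ with $I\cdot S\subseteq I$ and $\lambda_a(I)\subseteq I$ for all $a\in S$. By \cref{def:ideal-dwb} it remains only to check that $I$ is a normal subsemigroup of $(S,\circ)$: fullness is given, closure under $\circ$ and under $\circ$-inverses follows from the remark after the definition of left ideal (namely $a\circ b = a + \lambda_a(b)\in I$ and $a^- = \lambda_{a^-}(-a)\in I$ for $a,b\in I$, using $\lambda$-invariance and additive closure), and the conjugation condition $a^-\circ I\circ a\subseteq I$ for $a\in S$ is where $I\cdot S\subseteq I$ enters: for $x\in I$, expand $a^-\circ x\circ a$ using \cref{prop:cdot-circ}-$1.$ (so $a^-\circ x = a^- + a^-\cdot x + x$) and then again to peel off $a$, reducing everything to sums of $\cdot$-products with at least one factor in $I$ on the appropriate side together with $\lambda$-images of elements of $I$ — all of which lie in $I$ by hypothesis and additive normality. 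Assembling these shows $I$ is an ideal, completing the proof.
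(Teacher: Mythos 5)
Your proposal is correct and follows essentially the same route as the paper: part 1 via $\lambda_a(x)=a\cdot x+x$, part 2 via rewriting $x\cdot a=-x+a+\lambda_a\left(a^-\circ x\circ a\right)-a$ and feeding in $\circ$-normality, $\lambda$-invariance and additive normality, and part 3 by reducing the $\circ$-conjugation condition to an additive conjugate of $\lambda_a\left(x+x\cdot a^-\right)$, which lies in $I$ because $I\cdot S\subseteq I$. The only cosmetic point is that in part 2 the final step uses additive \emph{normality} ($a+I-a\subseteq I$), not merely additive closure, and parts 2--3 would need the computations written out rather than sketched.
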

\begin{proof}
$1.$ \ If $I$ is a left ideal of $S$, $a \in S$ and $x \in I$, then  $a\cdot x=\lambda_a\left(x\right)-x \in I$. Conversely, if $S\cdot I\subseteq I$, $a\in S$ and $x\in I$, by \eqref{eq:lambda-cdot}, we obtain that
$\lambda_a\left(x\right) =  a\cdot x + x\in I$.\\
$2.$ \ By $1.$, it is enough to see that if $a \in S$ and $x \in I$, then
\begin{align*}
    x \cdot a=-x+a-a+a \circ a^-\circ x\circ a-a=-x+\underbrace{a+\lambda_a\left(a^-\circ x \circ a\right)-a}_{\in I} \in I. 
\end{align*} 
$3.$ \ It is enough to show that if $a\in S$ and $x\in I$, then by \cref{prop_weak}
\begin{align*}
    a\circ x\circ a^- 
    &= a + \lambda_a\left(x + \lambda_x\left(a^-\right)\right)
    =a + \lambda_a\left(x \right) + \lambda_{a\circ x}\left(a^-\right) + a - a\\
    &= a + \lambda_a\left(x + \lambda_x\left(a^-\right) - a^-\right) - a
    = a + \lambda_a\underbrace{\left(x +x\cdot a^-\right)}_{\in I} - a\in I,
\end{align*}
which completes the proof.
\end{proof}

\medskip

As a consequence of \cref{lemma-ideali}, one can characterize the ideals of a dual weak brace as is usual in ring theory.
\begin{cor}\label{th:prop-divorante}
Let $(S, +, \circ)$ be a dual weak brace. Then, a normal subsemigroup $I$ of $\left(S,+\right)$ is an ideal of $S$ if and only if $S\cdot I\subseteq I$ and $I\cdot S\subseteq I$.
\end{cor}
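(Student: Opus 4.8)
The plan is to deduce this corollary directly from \cref{lemma-ideali}, so the proof is essentially a bookkeeping argument with no new computation. The statement is an ``if and only if'' between two conditions on a normal subsemigroup $I$ of $(S,+)$: being an ideal of $S$, versus satisfying $S\cdot I\subseteq I$ and $I\cdot S\subseteq I$.

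First I would handle the forward implication, which is immediate: if $I$ is an ideal of $S$, then \cref{lemma-ideali}-$2.$ gives both $S\cdot I\subseteq I$ and $I\cdot S\subseteq I$ at once, and there is nothing left to prove. Next, for the converse, suppose $I$ is a normal subsemigroup of $(S,+)$ with $S\cdot I\subseteq I$ and $I\cdot S\subseteq I$. By \cref{lemma-ideali}-$3.$, it suffices to verify that $\lambda_a(I)\subseteq I$ for every $a\in S$, since the remaining hypotheses ($I$ normal in $(S,+)$ and $I\cdot S\subseteq I$) are already in hand. But $\lambda$-invariance follows from \cref{lemma-ideali}-$1.$: since $I$ is in particular a full inverse subsemigroup of $(S,+)$ and $S\cdot I\subseteq I$, that lemma tells us $I$ is a left ideal, which by definition means $\lambda_a(I)\subseteq I$ for all $a\in S$. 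Feeding this back into \cref{lemma-ideali}-$3.$ shows $I$ is an ideal of $S$, completing the proof.

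There is no real obstacle here; the only thing to be careful about is citing the right parts of \cref{lemma-ideali} in the right order — using part $1.$ to upgrade ``$S\cdot I\subseteq I$'' to ``$\lambda$-invariant'', and then part $3.$ to assemble the three conditions (normality in $(S,+)$, $I\cdot S\subseteq I$, and $\lambda$-invariance) into the conclusion that $I$ is an ideal. One should also note that a normal subsemigroup of $(S,+)$ is automatically a full inverse subsemigroup of $(S,+)$, so the hypothesis of \cref{lemma-ideali}-$1.$ is met; this is the only implicit step worth spelling out.
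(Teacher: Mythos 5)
Your proof is correct and matches the paper's intent exactly: the corollary is stated there as an immediate consequence of \cref{lemma-ideali}, with the forward direction from part $2.$ and the converse obtained by using part $1.$ to convert $S\cdot I\subseteq I$ into $\lambda$-invariance and then feeding this into part $3.$ Your remark that a normal subsemigroup of $(S,+)$ is in particular a full inverse subsemigroup is the right (and only) implicit step to flag.
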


\medskip

To conclude this section, similarly to \cite[Proposition 1.3]{BoJed21x}, we show how to obtain instances of left ideals starting from ideals. If $X,Y$ are ideals of a dual weak brace $S$, we denote by
    $\left[X,\, Y\right]_+ 
        = \bigl\langle\, \left[x,\, y\right]_+  \ | \ x\in X,\ y\in Y\, \bigr\rangle$.
\begin{prop}
Let $\left(S,+,\circ\right)$ be a dual weak brace and $I,J$ ideals of $S$. Then, $I\cdot J$,  $I\cdot J + J\cdot I$, and $\left[I,\, J\right]_+$ are left ideals of $S$.\end{prop}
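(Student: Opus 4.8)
The plan is to verify each of the three sets against \cref{lemma-ideali}-1., which says that a full inverse subsemigroup $I$ of $(S,+)$ is a left ideal of $S$ precisely when $S\cdot I\subseteq I$. So the argument splits into two uniform tasks for each of $I\cdot J$, $I\cdot J+J\cdot I$, and $[I,\,J]_+$: first, check that the set is a full inverse subsemigroup of $(S,+)$; second, check the absorption property $S\cdot(\text{set})\subseteq(\text{set})$. Fullness is immediate in each case since $\E(S)\subseteq I$ and $\E(S)\subseteq J$ force $\E(S)$ into each of the three generated subsemigroups; being a full \emph{inverse} subsemigroup of $(S,+)$ is built into the definition of the generated objects $X\cdot Y$ and $[X,Y]_+$ as stated in the excerpt, so little work remains there.

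The heart of the matter is the absorption property, and the key tool is the identity \cref{prop:cdot-circ}-\ref{ref_puntino3}, namely $(a\circ b)\cdot c = a\cdot(b\cdot c) + b\cdot c + a\cdot c$, together with \cref{prop:cdot-circ}-\ref{ref_puntino2}, $a\cdot(b+c) = a\cdot b + b + a\cdot c - b$. First I would treat $I\cdot J$: a generic generator of $S\cdot(I\cdot J)$ has the form $a\cdot w$ where $w$ is a sum (with signs) of elements $x\cdot y$, $x\in I$, $y\in J$; using \cref{prop:cdot-circ}-\ref{ref_puntino2} repeatedly (and the fact that additive conjugation by an element of $S$ preserves any ideal, hence in particular $I$ and $J$, hence $I\cdot J$), I reduce to showing $a\cdot(x\cdot y)\in I\cdot J$ for $x\in I$, $y\in J$. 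For this I rewrite $a\circ x = a + a\cdot x + x$ via \cref{prop:cdot-circ}-1., so that $(a\circ x)\cdot y$ is simultaneously $a\cdot(x\cdot y)+x\cdot y+a\cdot y$ by \ref{ref_puntino3} and also expressible through \ref{ref_puntino2} applied to the sum $a+(a\cdot x)+x$; solving for $a\cdot(x\cdot y)$ exhibits it as a combination of terms each lying in $I\cdot J$ — using $x\cdot y\in I\cdot J$, $a\cdot y\in S\cdot J\subseteq J$ (since $J$ is an ideal, by \cref{lemma-ideali}-2.) hence $\dots$ — with the caveat that one must keep all intermediate terms inside $I$ or $J$. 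The analogous computation, with the roles of $I$ and $J$ swapped, handles generators $y\cdot x$ and thus gives $S\cdot(J\cdot I)\subseteq J\cdot I$; combined with \cref{prop:cdot-circ}-\ref{ref_puntino2} to split $a\cdot(u+v)$ across a sum, this yields $S\cdot(I\cdot J+J\cdot I)\subseteq I\cdot J+J\cdot I$.

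For $[I,\,J]_+$, the additive commutator subsemigroup, I would note $[x,y]_+ = -x-y+x+y = (-x-y+x)-(-y) $; since $I$ and $J$ are ideals, hence normal subsemigroups of $(S,+)$, one has $-x-y+x\in J$ (conjugate of $-y$) and thus $[x,y]_+\in J$, and symmetrically $[x,y]_+\in I$, so $[I,J]_+\subseteq I\cap J$. But $I\cap J$ is an ideal of $S$ (intersection of ideals), in particular a left ideal, so $S\cdot[I,J]_+\subseteq S\cdot(I\cap J)\subseteq I\cap J$; this alone does not give absorption \emph{into} $[I,J]_+$, so instead I would argue directly that for $a\in S$, $x\in I$, $y\in J$, the element $a\cdot[x,y]_+$ lies in $[I,J]_+$ by expanding $[x,y]_+$ as a signed sum and pushing $a\cdot(-)$ through via \cref{prop:cdot-circ}-\ref{ref_puntino2}, landing on generators $a\cdot x'$ with $x'\in I$ which, since $x'\in I$ and $a\cdot x'\in I$, can be recombined — together with the conjugation-invariance of $[I,J]_+$ (both $I$ and $J$ are $\lambda$-invariant and $+$-normal) — into $[I,J]_+$. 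The main obstacle I anticipate is bookkeeping: in \ref{ref_puntino2} and \ref{ref_puntino3} the "error" terms $+b$ and $-b$, and the middle term $b\cdot c$, are not themselves obviously in the target set, so at each step one has to invoke that $I$ and $J$ are full normal $\lambda$-invariant subsemigroups (so they absorb conjugation and contain $\E(S)$) to reabsorb these terms; getting the order of the summands exactly right so that every partial sum stays inside an ideal is where care is needed, but no genuinely new idea beyond \cref{prop:cdot-circ} and \cref{lemma-ideali} should be required.
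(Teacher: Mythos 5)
Your overall strategy (reduce everything to \cref{lemma-ideali}-1.\ and check absorption on generators) matches the paper's, but the execution has two genuine gaps. The first is at the crux: showing $a\cdot(x\cdot y)\in I\cdot J$ for $a\in S$, $x\in I$, $y\in J$. Solving \cref{prop:cdot-circ}-\ref{ref_puntino3} for $a\cdot(x\cdot y)$ gives $a\cdot(x\cdot y)=(a\circ x)\cdot y-a\cdot y-x\cdot y$, and of these three terms only $x\cdot y$ is a generator of $I\cdot J$; the terms $(a\circ x)\cdot y$ and $a\cdot y$ lie in $S\cdot J\subseteq J$, but there is no reason for either to lie in $I\cdot J$ --- this is exactly what your ``hence $\dots$'' leaves unproved, and membership in $J$ does not suffice. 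What closes the argument is that the \emph{difference} $(a\circ x)\cdot y-a\cdot y$ equals $\lambda_{a\circ x}(y)-\lambda_a(y)=\lambda_{a\circ x\circ a^-}\lambda_a(y)-\lambda_a(y)=\left(a\circ x\circ a^-\right)\cdot\lambda_a(y)$, which \emph{is} a single generator because $a\circ x\circ a^-\in I$ (normality of $I$ in $(S,\circ)$) and $\lambda_a(y)\in J$ ($\lambda$-invariance of $J$). This recombination --- the paper's identity $s\cdot(x\cdot h)=\left(s\circ x\circ s^-\right)\cdot\lambda_s(h)-x\cdot h$ --- is the essential idea, and it uses the multiplicative normality of $I$, which your proposal never invokes.

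The second gap concerns $I\cdot J+J\cdot I$: closure under $+$ is \emph{not} built into the definition, since this set is a setwise sum of two subsemigroups, not a generated subsemigroup. To see that $(u_1+v_1)+(u_2+v_2)$ stays inside it one must move an element of $J\cdot I$ past one of $I\cdot J$, i.e.\ prove $J\cdot I+I\cdot J\subseteq I\cdot J+J\cdot I$; the paper does this with the identity $h\cdot x+y\cdot k=-y\cdot(h\cdot x)+y\cdot(h\cdot x+k)+h\cdot x$, and nothing in your sketch supplies a substitute. By contrast, once $I\cdot J$ and $J\cdot I$ are known to be left ideals, the $\lambda$-invariance of their sum is immediate because $\lambda_a$ is an additive endomorphism --- the same observation that makes $[I,J]_+$ a one-line case, via $\lambda_a([x,y]_+)=[\lambda_a(x),\lambda_a(y)]_+$ with $\lambda_a(x)\in I$ and $\lambda_a(y)\in J$, rather than the detour through $I\cap J$ and the conjugation bookkeeping you propose, which as written is not a proof.
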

\begin{proof}
Initially, note that $I\cdot J$ and $I\cdot J + J\cdot I$ trivially contain $\E\left(S\right)$. Moreover, $I\cdot J$ clearly is an inverse subsemigroups of $\left(S, +\right)$ and if $s\in S$, $x\in I$, and $h\in J$, then, by \cref{prop_weak}-$2.$ and \eqref{idemp_punt},
\begin{align*}
    s\cdot\left(x\cdot h\right)
    &= \lambda_{s\circ x}\left(h\right) - \lambda_{s}\left(h\right) - x\cdot h
    = \lambda_{s\circ x\circ s^-}\lambda_{s}\left(h\right) - \lambda_{s}\left(h\right) - x\cdot h\\
    &= \left(s\circ x\circ s^-\right)\cdot \lambda_{s}\left(h\right) - x\cdot h\in I\cdot J,
\end{align*}
hence, by \cref{lemma-ideali}-$1.$, $I\cdot J$ is a left ideal of $S$.\\
To prove that $I\cdot J + J\cdot I$ is an inverse subsemigroup of $\left(S,+\right)$, let us check that 
$J\cdot I + I\cdot J\subseteq I\cdot J + J\cdot I$. If $x,y\in I$ and $h,k\in J$, since
$\left(y\cdot\left(h\cdot x\right)\right)^0 =
y^0 + y^0\circ\left(h\cdot x\right)^0 + \left(h\cdot x\right)^0
= \left(h\cdot x\right)^0 + y^0$ and
$y\cdot k = y^0 + y\cdot k$, 
it follows that
\begin{align*}
    h\cdot x + y\cdot k
    &= \left(y\cdot\left(h\cdot x\right)\right)^0 + h\cdot x + y\cdot k + \left(h\cdot x\right)^0\\  
    &= - y\cdot\left(h\cdot x\right)
    + y\cdot\left(h\cdot x + k\right) + h\cdot x
    \in I\cdot J + J\cdot I,
\end{align*}
where in the last equality we use \cref{prop:cdot-circ}-$2.$. Furthermore, by the first part of the proof, we trivially get $\lambda_a\left(I\cdot J + J\cdot I\right)\subseteq I\cdot J + J\cdot I$, for every $a\in S$. \\
Finally,  $\left[I,\, J\right]_+$ trivially is a left ideal of $S$. Therefore, the claim is proved.
\end{proof}

\bigskip

\section{Right nilpotent dual weak braces}

Following \cite{CeSmVe19,Ru07}, if $(S,+,\circ)$ is a dual weak brace, set $S^{(1)}:=S$, we inductively define
    \begin{center}
      $S^{(n+1)}=S^{(n)}\cdot S$,
    \end{center} 
    for every $n\geq 1$, and obtain that it is an ideal, whose proof can be proved similarly to 
\cite[Proposition 2.1]{CeSmVe19}. However, unlike what happens for the socle, here we can use the characterization of ideals in \cref{teo_descr_ideali}.
\begin{prop}
If $(S,+,\circ)$ is a dual weak brace, then $S^{(n)}$ is an ideal of $S$, for every $n \geq 1$.\end{prop}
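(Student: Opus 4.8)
The plan is to argue by induction on $n$, using \cref{teo_descr_ideali} to reduce the statement to a purely skew-brace-theoretic fact about each fibre $B_\alpha$. For $n=1$, $S^{(1)}=S$ is the full structure, which is an ideal trivially. For the inductive step, suppose $S^{(n)}$ is an ideal of $S$; by \cref{teo_descr_ideali} it has the form $S^{(n)}=[Y;\,J_\alpha;\,\psi_{\alpha,\beta}]$ where $J_\alpha=S^{(n)}\cap B_\alpha$ is an ideal of the skew brace $B_\alpha$ and $\phi_{\alpha,\beta}(J_\alpha)\subseteq J_\beta$ for $\alpha>\beta$. I then want to identify $S^{(n+1)}=S^{(n)}\cdot S$ fibrewise and show the resulting pieces again satisfy the hypotheses of \cref{teo_descr_ideali}.

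The first key step is to compute $S^{(n)}\cdot S$ inside each $B_\alpha$. Given $x\in J_\gamma$ and $s\in B_\delta$ with $\gamma,\delta\in Y$, the element $x\cdot s=\lambda_x(s)-s$ lies in $B_{\gamma\delta}$, since the operations of the strong semilattice send $B_\gamma\times B_\delta$ into $B_{\gamma\delta}$, and moreover $x\cdot s=\phi_{\gamma,\gamma\delta}(x)\cdot\phi_{\delta,\gamma\delta}(s)$ by the very definitions of $+$ and $\circ$ in $S=[Y;B_\alpha;\phi_{\alpha,\beta}]$. Taking $\delta=\alpha$ and using $\phi_{\gamma,\gamma\alpha}(x)\in J_{\gamma\alpha}\cap B_{\gamma\alpha}$ together with $J_{\gamma\alpha}\subseteq B_{\gamma\alpha}$, one sees that every generator $x\cdot s$ of $S^{(n+1)}$ landing in $B_\alpha$ is of the form $j\cdot b$ with $j\in J_\alpha$ (more precisely $j\in\phi_{\gamma\alpha,\alpha}$-image issues aside, $j$ ranges over a subset mapping into $J_\alpha$) and $b\in B_\alpha$ — so that $S^{(n+1)}\cap B_\alpha$ is exactly the additive inverse subsemigroup generated by $\{j\cdot b\mid j\in J_\alpha,\ b\in B_\alpha\}$, i.e.\ $J_\alpha\cdot B_\alpha=B_\alpha^{(?)}$ in the notation of $B_\alpha$; in particular by iterating from $n=1$ one gets $S^{(n)}\cap B_\alpha=B_\alpha^{(n)}$. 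This identification is the technical heart of the argument and the step I expect to require the most care, because one must track how the connecting maps $\phi$ interact with the generated inverse subsemigroup and verify there are no extra elements beyond those coming from $B_\alpha$ itself.

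The remaining steps are then routine. Since $B_\alpha^{(n)}$ is an ideal of the skew brace $B_\alpha$ (this is \cite[Proposition 2.1]{CeSmVe19}, valid for skew braces), each fibre $I_\alpha:=S^{(n+1)}\cap B_\alpha$ is an ideal of $B_\alpha$. Next I must check the compatibility condition $\phi_{\alpha,\beta}(B_\alpha^{(n)})\subseteq B_\beta^{(n)}$ for $\alpha>\beta$: this follows by induction from the fact that $\phi_{\alpha,\beta}$ is a skew brace homomorphism, hence $\phi_{\alpha,\beta}(x\cdot s)=\phi_{\alpha,\beta}(x)\cdot\phi_{\alpha,\beta}(s)$, so it sends a generating set of $B_\alpha^{(n+1)}$ into $B_\beta^{(n+1)}$ and, being additive, sends the generated inverse subsemigroup into the generated inverse subsemigroup. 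With both hypotheses of \cref{teo_descr_ideali} verified, $S^{(n+1)}=[Y;\,B_\alpha^{(n+1)};\,{\phi_{\alpha,\beta}}_{|}]$ is an ideal of $S$, completing the induction.
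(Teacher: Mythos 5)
Your proposal is correct and follows essentially the same route as the paper: decompose $S=[Y;B_\alpha;\phi_{\alpha,\beta}]$, identify $S^{(n)}$ fibrewise with the ideals $B_\alpha^{(n)}$ of the skew braces $B_\alpha$, verify $\phi_{\alpha,\beta}\bigl(B_\alpha^{(n)}\bigr)\subseteq B_\beta^{(n)}$ by induction using that the $\phi_{\alpha,\beta}$ are skew brace homomorphisms (hence respect $\cdot$), and conclude via \cref{teo_descr_ideali}. You are merely more explicit than the paper about the identification $S^{(n)}\cap B_\alpha=B_\alpha^{(n)}$, which the paper dispatches with ``one clearly obtains $S^{(n)}\subseteq\bigcup_\alpha B_\alpha^{(n)}$''.
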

\begin{proof}
    Initially, by \cref{theo_dual_skew}, we have that $S=[Y; B_\alpha; \phii{\alpha}{\beta}]$ and one clearly obtains $\displaystyle{S^{(n)} \subseteq \bigcupdot_{\alpha \in Y} \ B_\alpha^{(n)}}$. The claim follows by \cref{teo_descr_ideali}, by showing that $\phii{\alpha}{\beta}\left(B_\alpha^{(n)}\right)\subseteq B_\beta^{(n)}$, for any $\alpha > \beta$, by proceeding by an easy induction on $n$. 
\end{proof}

\medskip

\begin{defin}
    A dual weak brace $(S,+,\circ)$ is said to be \emph{right nilpotent} if $S^{(n)}=E(S)$ for some $n\geq 1$.
    The smallest positive integer $m$ such that $S^{(m+1)}=\E(S)$ is called \emph{right nilpotency index}  of $S$.
\end{defin}

\noindent It is easy to check that if $S$ is a right nilpotent dual weak brace, then any ideal $I$ and quotient $S/I$ are right nilpotent as well. At the present state of our knowledge, we do not know whether the vice versa is true in general. Surely, it is true when $I$ is the socle of any dual weak brace, as we show in the following result.
At first, note that $\Soc\left(S\right)$ is right nilpotent  since,  using \cref{idemp.puntino}, it can be rewritten in terms of the $\cdot$ operation, as
\begin{align*}
     \Soc\left(S\right) = \{ a  \, | \,a \in S, \, \,\forall \ b \in S \quad a\cdot b \in \E(S) \quad \text{and} \quad a+b=b+a \}.
\end{align*}

\begin{prop}
Let $(S,+, \circ)$ be a dual weak brace such that $S/\Soc\left(S\right)$ is right nilpotent. Then, $S$ is right nilpotent.\end{prop}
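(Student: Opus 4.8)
The plan is to deduce the statement from two observations: first, that $\Soc(S)\cdot S\subseteq\E(S)$, and second, that the canonical epimorphism $\chi\colon S\to S/\Soc(S)$ satisfies $\chi\bigl(S^{(n)}\bigr)=\bigl(S/\Soc(S)\bigr)^{(n)}$ for every $n\geq 1$. The first is immediate from the reformulation of $\Soc(S)$ in terms of $\cdot$ recalled just above: each generator $x\cdot s$ of the additive inverse subsemigroup $\Soc(S)\cdot S$ has $x\in\Soc(S)$, hence $x\cdot s\in\E(S)$, and since $\E(S)$ is itself an additive inverse subsemigroup of $S$ (it is the semilattice of idempotents), the whole of $\Soc(S)\cdot S$ lies in $\E(S)$.

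For the second observation I would argue by induction on $n$, the case $n=1$ being $\chi(S)=S/\Soc(S)$. For the inductive step, since $\chi$ is a homomorphism of dual weak braces it preserves $+$, additive inverses, and $\circ$, hence the operation $\cdot$, so $\chi(x\cdot s)=\chi(x)\cdot\chi(s)$ for all $x,s\in S$; because the image under a homomorphism of the additive inverse subsemigroup generated by a set equals the additive inverse subsemigroup generated by the image of that set, and $\chi$ is onto, one gets $\chi\bigl(S^{(n)}\cdot S\bigr)=\chi\bigl(S^{(n)}\bigr)\cdot\bigl(S/\Soc(S)\bigr)=\bigl(S/\Soc(S)\bigr)^{(n+1)}$ by the inductive hypothesis. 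Next I would record that $\ker\chi=\Soc(S)$: by \cref{teo_omo} an element $a$ lies in $\ker\chi$ iff $a\sim_{\Soc(S)}e$ for some $e\in\E(S)$, which forces $e=a^0$ and $-a+a^0\in\Soc(S)$; writing $a\in B_\alpha$ via \cref{theo_dual_skew}, $a^0$ is the additive identity of $B_\alpha$, so $-a+a^0=-a\in\Soc(S)$ and hence $a\in\Soc(S)$, as $\Soc(S)$ is closed under additive inverses (the reverse inclusion $\Soc(S)\subseteq\ker\chi$ being clear by taking $e=a^0$).

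Finally, assuming $S/\Soc(S)$ right nilpotent, say $\bigl(S/\Soc(S)\bigr)^{(n+1)}=\E\bigl(S/\Soc(S)\bigr)$ for some $n\geq 1$, the two observations give $\chi\bigl(S^{(n+1)}\bigr)=\E\bigl(S/\Soc(S)\bigr)$, so $S^{(n+1)}\subseteq\ker\chi=\Soc(S)$, and therefore $S^{(n+2)}=S^{(n+1)}\cdot S\subseteq\Soc(S)\cdot S\subseteq\E(S)$; since $S^{(n+2)}$ is an ideal of $S$ it contains $\E(S)$, whence $S^{(n+2)}=\E(S)$ and $S$ is right nilpotent. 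The only point that requires some care is the identification $\ker\chi=\Soc(S)$, i.e.\ checking that $\chi(a)$ being idempotent forces $a\in\Soc(S)$; once the explicit form of the congruence $\sim_{\Soc(S)}$ and the fact that $a^0$ is the group identity of the block $B_\alpha$ containing $a$ are used this is routine, and the remainder is essentially the transfer of the classical skew-brace argument of \cite{CeSmVe19} to the Clifford setting.
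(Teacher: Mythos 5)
Your argument is correct and follows essentially the same route as the paper's proof: both deduce $S^{(m)}\subseteq\Soc(S)$ from the right nilpotency of the quotient and then conclude via $S^{(m+1)}=S^{(m)}\cdot S\subseteq\Soc(S)\cdot S\subseteq\E(S)$, using the reformulation of $\Soc(S)$ in terms of the operation $\cdot$. You merely make explicit two steps the paper leaves implicit, namely $\chi\bigl(S^{(n)}\bigr)=\bigl(S/\Soc(S)\bigr)^{(n)}$ and $\ker\chi=\Soc(S)$, and both of these checks are correct.
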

\begin{proof}
Since there exists $m\in\mathbb{N}$ such that $\left(S/\Soc\left(S\right)\right)^{\left(m\right)} = E\left(S/\Soc\left(S\right)\right)$, it follows that $S^{\left(m\right)}\subseteq \Soc\left(S\right)$.
Hence, $S^{\left(m+1\right)} = S^{\left(m\right)}\cdot S\subseteq \Soc\left(S\right)\cdot S\subseteq E\left(S\right)$. 
Thus, the claim is proved.
\end{proof}

\medskip

\begin{defin} \label{def_soc_Series}Let $(S,+, \circ)$ be a dual weak brace. Set $\Soc_0\left(S\right):= E\left(S\right)$, we define $\Soc_n\left(S\right)$ to be the ideal of $S$ containing $\Soc_{n-1}\left(S\right)$ such that
    \begin{align*}
       \Soc_n\left(S\right)/\Soc_{n-1}\left(S\right)=\Soc\left(S/\Soc_{n-1}\left(S\right)\right),
    \end{align*}
    for every positive integer $n$.\end{defin}
\noindent In particular, 
$\Soc_n\left(S\right):=\{a \, | \, a\in S, \, \,  \forall\, b \in S \quad a \cdot b\in \Soc_{n-1}\left(S\right), \,\, [a,b]_{+} \in \Soc_{n-1}\left(S\right)\},$ for every positive integer $n$ (cf. \cite[Remark 28]{CaCeSt22}).
\medskip

\begin{defin}
   Let $(S,+,\circ)$ be a dual weak brace. An \emph{$s$-series} (or \emph{left annihilator series}) of $S$ is a sequence of ideals of $S$
   \begin{align*}
       \E(S) = I_0\subseteq I_1\subseteq I_2\subseteq\cdots\subseteq I_m = S
   \end{align*}
such that $I_{j}/I_{j-1}\subseteq \Soc\left(S/{I_{j-1}}\right)$, for each $j\in \{1,\ldots,m\}$. 
\end{defin}

\medskip
 
\noindent One can check that  $S$ admits an s-series if and only if there exists a positive integer $n$ such that $S=\Soc_n\left(S\right)$, as in the case of the skew braces (cf. \cite[Lemma 2.15]{CeSmVe19}). In light of this fact, we give the following definitions.

\begin{defin}
    A dual weak brace $(S,+,\circ)$ is called \emph{left annihilator nilpotent} if $S$ admits an s-series. Consequently,  if $S$ is left annihilator nilpotent, we call \emph{socle series} (or \emph{upper left annihilator series}) the series introduced in \cref{def_soc_Series}.
    The smallest non-negative integer $n$ such that $S=\Soc_n\left(S\right)$ is called \emph{left annihilator nilpotency index} of $S$.
\end{defin}

\medskip

\begin{prop}\label{prop_right_nilp}
  Let $(S,+,\circ)$ be a left annihilator nilpotent dual weak brace.  Then, $S$ is right nilpotent.\end{prop}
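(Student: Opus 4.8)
The plan is to prove \cref{prop_right_nilp} by induction on the left annihilator nilpotency index $n$ of $S$, that is, on the smallest $n$ with $S = \Soc_n(S)$. The base case $n = 0$ is immediate: then $S = \Soc_0(S) = \E(S)$, so $S^{(2)} = S \cdot S = \E(S) \cdot \E(S) \subseteq \E(S)$ by \eqref{idemp_punt}, hence $S$ is right nilpotent (indeed trivially).

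For the inductive step, suppose the result holds for all dual weak braces of left annihilator nilpotency index at most $n-1$, and let $S$ satisfy $S = \Soc_n(S)$. Consider the quotient $S/\Soc_1(S) = S/\Soc(S)$. By the very construction of the socle series and the fact (noted just before \cref{def_soc_Series}) that $\Soc_k(S)/\Soc_1(S) = \Soc_{k-1}(S/\Soc_1(S))$ — which follows from iterating the third isomorphism-type corollary for quotients — we get $S/\Soc(S) = \Soc_{n-1}(S/\Soc(S))$, so $S/\Soc(S)$ is left annihilator nilpotent of index at most $n-1$. By the inductive hypothesis, $S/\Soc(S)$ is right nilpotent. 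Now invoke the proposition proved just above (the one showing that if $S/\Soc(S)$ is right nilpotent then $S$ is right nilpotent): this immediately yields that $S$ is right nilpotent, completing the induction.

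The only slightly delicate point is the identification $\Soc_k(S)/\Soc_1(S) = \Soc_{k-1}(S/\Soc_1(S))$, i.e.\ that passing to the quotient by $\Soc(S)$ shifts the socle series down by one. This is a routine consequence of \cref{def_soc_Series} together with the correspondence between ideals of $S$ containing $\Soc_1(S)$ and ideals of $S/\Soc_1(S)$ (the last corollary of Section 3) and the iterated-quotient corollary $\bigl(S/\Soc_1(S)\bigr)/\bigl(\Soc_k(S)/\Soc_1(S)\bigr) \cong S/\Soc_k(S)$; one checks by induction on $k$ that the defining equation $\Soc_k(S)/\Soc_{k-1}(S) = \Soc(S/\Soc_{k-1}(S))$ transports correctly. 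Alternatively, and more economically, one can avoid this identification altogether: argue directly that $S^{(m)} \subseteq \Soc_n(S) = S$ gives nothing, so instead prove by a second induction (on $j$) that $\Soc_j(S)$ is right nilpotent for every $j$, using at each stage that $\Soc_j(S)/\Soc_{j-1}(S) \subseteq \Soc(S/\Soc_{j-1}(S))$ forces $\Soc_j(S)^{(2)} \subseteq \Soc_{j-1}(S)$, hence $\Soc_j(S)^{(k+1)} \subseteq \Soc_{j-1}(S)^{(k)} \cdot S + \cdots$, and feeding in that $\Soc_{j-1}(S)$ is already right nilpotent by induction. Either route works; I expect the isomorphism-theorem bookkeeping to be the main (though entirely routine) obstacle, and the conceptual content to be carried entirely by the preceding two propositions.
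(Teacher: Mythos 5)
Your argument is correct, but it takes a genuinely different route from the paper. The paper fixes an $s$-series $\E(S)=I_0\subseteq\cdots\subseteq I_m=S$ and proves directly, by induction on $k$, that $S^{(k)}\subseteq I_{m-k+1}$: the only input is that $I_{j+1}/I_j\subseteq \Soc(S/I_j)$ forces $I_{j+1}\cdot S\subseteq I_j$, whence $S^{(k+1)}=S^{(k)}\cdot S\subseteq I_{m-k+1}\cdot S\subseteq I_{m-k}$. This is self-contained and needs no isomorphism-theorem bookkeeping. You instead induct on the left annihilator nilpotency index, pass to $S/\Soc(S)$, and invoke the preceding proposition (right nilpotency of $S/\Soc(S)$ implies that of $S$); the price is exactly the ``shift'' lemma $\Soc_k(S)/\Soc_1(S)\subseteq \Soc_{k-1}(S/\Soc_1(S))$, which you correctly flag as the delicate point. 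That lemma does hold (it follows by induction on $k$ from the explicit description $\Soc_k(S)=\{a \mid \forall b\ a\cdot b,\,[a,b]_+\in\Soc_{k-1}(S)\}$ given after \cref{def_soc_Series}, since $\cdot$ and $[\,\cdot\,,\cdot\,]_+$ are compatible with the quotient map), so your primary route is sound and yields the same bound $S^{(n+1)}=\E(S)$; note that only the inclusion $\subseteq$ is needed, not the equality you state. Your alternative ``second route'' is, once cleaned up, essentially the paper's proof: the correct formulation is not $\Soc_j(S)^{(k+1)}\subseteq\Soc_{j-1}(S)^{(k)}\cdot S+\cdots$ (the internal powers $\Soc_j(S)^{(k)}$ are the wrong objects for $j<n$), but rather that the iterated right product $(\cdots(\Soc_j(S)\cdot S)\cdots)\cdot S$ drops one level of the socle series at each step, since $\Soc_j(S)\cdot S\subseteq\Soc_{j-1}(S)$ --- which is precisely the paper's induction with $I_j=\Soc_j(S)$.
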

\begin{proof}
 Let $\E(S) = I_0\subseteq I_1\subseteq I_2\subseteq\cdots\subseteq I_m = S$ an $s$-series of $S$. Let $k\in \{1,\ldots,m\}$, we prove that $S^{(k)}\subseteq I_{m-k+1}$ by proceeding by induction on $k$. If $k=1$, then $S=I_m$. Now, let $k \in \mathbb{N}$ and assume that $S^{(k)}\subseteq I_{m-k+1}$. Then,  considered the canonical epimorphism $\chi: S \to S/I_{m-k}$, it holds that $\chi\left(I_{m-k+1}\right) \cdot \chi\left(S\right) \subseteq E\left(S/I_{m-k}\right)$, and so $I_{m-k+1} \cdot S \subseteq I_{m-k}$. Hence,
 \begin{align*}
     S^{(k+1)}=S^{(k)}\cdot S \subseteq I_{m-k+1} \cdot S \subseteq I_{m-k}
 \end{align*}
 and the claim follows by induction. Therefore, $S^{(m+1)}=E\left(S\right)$ and so $S$ is right nilpotent of right nilpotency index less or equal to $m$.
\end{proof}

\medskip

\noindent The converse of \cref{prop_right_nilp} is true in the particular case in which $(S,+)$ is a nilpotent Clifford semigroup. The notion of nilpotent Clifford semigroup which we adopt in this work is consistent with the one given in \cite{Me88}.

\begin{prop}
Let $(S,+, \circ)$ be a dual weak brace such that $(S,+)$ is nilpotent. Then, $S$ is right nilpotent if and only if $S$ is left annihilator nilpotent.\end{prop}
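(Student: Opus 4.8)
The implication ``left annihilator nilpotent $\Rightarrow$ right nilpotent'' is \cref{prop_right_nilp} and needs no extra hypothesis, so the plan is to prove the converse under the assumption that $(S,+)$ is nilpotent: if $S$ is right nilpotent, then $S=\Soc_n(S)$ for some $n$. Fix an integer $m\geq 1$ with $S^{(m+1)}=\E(S)$, and let $\E(S)=Z_0\subseteq Z_1\subseteq\cdots\subseteq Z_c=S$ be the upper central series of the Clifford semigroup $(S,+)$. The nilpotency of $(S,+)$ in the sense of \cite{Me88} guarantees that this series exists, terminates at $S$, and satisfies $[Z_i,S]_+\subseteq Z_{i-1}$ for every $i$.

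The heart of the argument is the inclusion $S^{(m)}\subseteq\Soc_c(S)$, which I would obtain by proving, by induction on $i$, that $S^{(m)}\cap Z_i\subseteq\Soc_i(S)$. For $i=0$ this is clear since $\E(S)\subseteq S^{(m)}$. For the inductive step, recall the description of $\Soc_i(S)$ recorded after \cref{def_soc_Series}, namely $\Soc_i(S)=\{a\mid\forall b\in S\quad a\cdot b\in\Soc_{i-1}(S)\text{ and }[a,b]_+\in\Soc_{i-1}(S)\}$. Let $a\in S^{(m)}\cap Z_i$ and $b\in S$. Since $S^{(m)}$ is an ideal, $a\cdot b\in S^{(m)}\cdot S=S^{(m+1)}=\E(S)\subseteq\Soc_{i-1}(S)$; and since every ideal is a normal subsemigroup of $(S,+)$, the additive commutator $[a,b]_+$ lies in $S^{(m)}$ (one has $[a,b]_+=-a+\left((-b)+a+b\right)$ with both summands in $S^{(m)}$), while $[a,b]_+\in Z_{i-1}$ because $a\in Z_i$. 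Hence $[a,b]_+\in S^{(m)}\cap Z_{i-1}\subseteq\Soc_{i-1}(S)$ by the inductive hypothesis, whence $a\in\Soc_i(S)$. Taking $i=c$ yields $S^{(m)}=S^{(m)}\cap Z_c\subseteq\Soc_c(S)$.

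To conclude, I would iterate this on quotients. The quotient $\overline{S}:=S/\Soc_c(S)$ is again a dual weak brace (by \cref{teo_omo} and the discussion preceding it) whose additive Clifford semigroup is nilpotent of class at most $c$, being a homomorphic image of $(S,+)$; moreover $\overline{S}$ is right nilpotent, since $(\overline{S})^{(k)}$ is the image of $S^{(k)}$ under the canonical epimorphism, so that $(\overline{S})^{(m)}=\E(\overline{S})$ because $S^{(m)}\subseteq\Soc_c(S)$. Applying the previous paragraph to $\overline{S}$ gives $(\overline{S})^{(m-1)}\subseteq\Soc_c(\overline{S})$, and since the socle series of a quotient by a term of the socle series continues the socle series — via the correspondence between ideals of $S$ containing $\Soc_c(S)$ and ideals of $\overline{S}$ — this reads $S^{(m-1)}\subseteq\Soc_{2c}(S)$. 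Iterating $m$ times one reaches $S=S^{(1)}\subseteq\Soc_{mc}(S)$, hence $S=\Soc_{mc}(S)$ and $S$ is left annihilator nilpotent (of index at most $mc$).

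The step I expect to require the most care is not the induction above but the supporting facts about nilpotent Clifford semigroups that make it run: that nilpotency of $(S,+)$ supplies the upper central series with $[Z_i,S]_+\subseteq Z_{i-1}$, that a normal subsemigroup of $(S,+)$ — in particular any ideal of $S$ — is closed under taking additive commutators with arbitrary elements of $S$, and that homomorphic images of nilpotent Clifford semigroups are again nilpotent. These are standard in the semigroup literature and should be invoked explicitly from \cite{Me88}; granting them, the remaining bookkeeping is routine.
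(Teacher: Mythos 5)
Your argument is correct and is essentially the proof the paper has in mind: the paper simply reduces one direction to \cref{prop_right_nilp} and, for the converse, defers to the proof of \cite[Lemma 2.16]{CeSmVe19} adapted via the central series of the nilpotent Clifford semigroup $(S,+)$ from \cite{Me88} — which is exactly the induction ($S^{(m)}\cap Z_i\subseteq \Soc_i(S)$, then iterate on quotients) that you carry out in full. The only cosmetic difference is that you climb the upper central series whereas the paper cites the lower central series of \cite[Definition 3.4]{Me88}; both rest on the same standard facts about nilpotent Clifford semigroups that you rightly flag as needing to be imported from \cite{Me88}.
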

    \begin{proof}
    The necessary condition follows by \cref{prop_right_nilp}.
    The sufficient one is similar to the proof of \cite[Lemma 2.16]{CeSmVe19} by using lower central series of $\left(S, +\right)$ in  \cite[Definition 3.4]{Me88}.
    \end{proof}

\medskip

Below, we relate the left annihilator nilpotency of a dual weak brace $S=[Y; B_\alpha;\phii{\alpha}{\beta}]$ with those of each skew brace $B_\alpha$. It will be a direct consequence of the following lemma.
\begin{lemma}\label{lemma.sock}
    Let $S=[Y; B_\alpha;\phii{\alpha}{\beta}]$ be a dual weak brace. Then, 
$B_\alpha\cap \, \Soc_k\left(S\right)\subseteq \Soc_k\left(B_\alpha\right)$, for all $\alpha \in Y$ and $k \in \mathbb{N}$.\end{lemma}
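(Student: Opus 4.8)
The statement to prove is that $B_\alpha \cap \Soc_k(S) \subseteq \Soc_k(B_\alpha)$ for all $\alpha \in Y$ and $k \in \mathbb{N}$. The plan is to proceed by induction on $k$. For $k = 0$ we have $\Soc_0(S) = \E(S)$ and $\Soc_0(B_\alpha) = \{e_\alpha\}$, where $e_\alpha$ is the identity of the group $(B_\alpha, +)$; since $B_\alpha \cap \E(S)$ consists exactly of the idempotent of $S$ lying in $B_\alpha$, which is $e_\alpha$, the base case is immediate. For the inductive step, assume $B_\alpha \cap \Soc_{k-1}(S) \subseteq \Soc_{k-1}(B_\alpha)$ for every $\alpha \in Y$, and take $a \in B_\alpha \cap \Soc_k(S)$. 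I will use the elementwise description of $\Soc_k(S)$ recalled after \cref{def_soc_Series}, namely that $a \in \Soc_k(S)$ means $a \cdot b \in \Soc_{k-1}(S)$ and $[a,b]_+ \in \Soc_{k-1}(S)$ for all $b \in S$. The goal is to verify the corresponding conditions for $a$ inside the skew brace $B_\alpha$, i.e. that $a \cdot b \in \Soc_{k-1}(B_\alpha)$ and $[a,b]_+ \in \Soc_{k-1}(B_\alpha)$ for all $b \in B_\alpha$.

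So fix $b \in B_\alpha$. Since $a, b \in B_\alpha$ and $B_\alpha$ is a skew brace closed under $+$ and $\circ$ (it is one of the disjoint skew braces from \cref{theo_dual_skew}), both $a \cdot b = -a + a \circ b - b$ and $[a,b]_+ = -a - b + a + b$ are computed entirely within $B_\alpha$, so $a \cdot b \in B_\alpha$ and $[a,b]_+ \in B_\alpha$. On the other hand, because $a \in \Soc_k(S)$ and $b \in S$, the same two elements lie in $\Soc_{k-1}(S)$. Hence $a \cdot b \in B_\alpha \cap \Soc_{k-1}(S)$ and $[a,b]_+ \in B_\alpha \cap \Soc_{k-1}(S)$, and by the inductive hypothesis both lie in $\Soc_{k-1}(B_\alpha)$. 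Since this holds for every $b \in B_\alpha$, the elementwise criterion for $\Soc_k(B_\alpha)$ (the skew-brace version, cf. \cite{CeSmVe19}) gives $a \in \Soc_k(B_\alpha)$, completing the induction.

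The only genuinely delicate point is the bookkeeping at the start: one must check that the elementwise formula $\Soc_k(S) = \{a \mid \forall b\in S,\ a\cdot b \in \Soc_{k-1}(S),\ [a,b]_+ \in \Soc_{k-1}(S)\}$ is legitimate as a drop-in replacement for \cref{def_soc_Series}. This is exactly the content of the remark following \cref{def_soc_Series} (and its skew-brace analogue cf. \cite[Remark 28]{CaCeSt22}, \cite[Lemma 2.15]{CeSmVe19}), so I would simply cite it. A secondary subtlety is that the operations $+$, $\circ$ and $\cdot$ restricted to elements of $B_\alpha$ agree with the intrinsic skew-brace operations of $B_\alpha$ — this is immediate from the strong-semilattice formulas in \cref{theo_dual_skew} since $\phi_{\alpha,\alpha} = \id$, and the fact that $a \cdot b$ and $[a,b]_+$ involve only group operations in the single fibre $B_\alpha$. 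Beyond that the argument is a routine two-line induction, and I do not expect any real obstacle.
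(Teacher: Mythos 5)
Your proof is correct and follows essentially the same route as the paper's: an induction on $k$ using the elementwise description of $\Soc_k$, observing that for $a,b\in B_\alpha$ the elements $a\cdot b$ and $[a,b]_+$ stay in $B_\alpha$ while lying in $\Soc_{k-1}(S)$, so the inductive hypothesis applies. The only cosmetic difference is that you anchor the induction at $k=0$ while the paper starts at $k=1$ via its description of ideals; both base cases are immediate.
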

    \begin{proof}
        Let us check that $B_\alpha\cap \, \Soc_k\left(S\right)\subseteq \Soc_k\left(B_\alpha\right)$, by proceeding by induction on $k$. If $k = 1$ the claim follows by  \cref{teo_descr_ideali}. Now, suppose that the statement holds for $k\in\mathbb{N}$ and let $a\in B_\alpha\cap\Soc_{k+1}\left(S\right)$. Then, for any $b\in B_\alpha$,
    $a\cdot b\in \Soc_{k}\left(S\right)$, $[a,b]_+\in\Soc_{k}\left(S\right)$, and $a\cdot b\in B\alpha$, and so, by the inductive hypothesis, $a\in\Soc_{k+1}\left(B_\alpha\right)$.
    \end{proof}

\begin{prop}\label{pro_right_balpha}
    Let $S=[Y; B_\alpha;\phii{\alpha}{\beta}]$ be a left annihilator nilpotent dual weak brace of index $n$. Then, for every $\alpha\in Y$ each skew brace $B_\alpha$ is left annihilator nilpotent of index less or equal to $n$. \\
    Vice versa, if each skew brace $B_\alpha$ is left annihilator nilpotent of index $k_\alpha$, then $S$ is left annihilator nilpotent of index equal to the maximum between $k_\alpha$, for every $\alpha \in Y$.
\end{prop}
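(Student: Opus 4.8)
The plan is to prove the two directions separately, in each case reducing the statement about the dual weak brace $S$ to statements about the skew braces $B_\alpha$ via the structural description in \cref{theo_dual_skew} and the socle-series comparison in \cref{lemma.sock}. For the first (forward) direction, suppose $S$ is left annihilator nilpotent of index $n$, i.e. $S = \Soc_n(S)$. Fix $\alpha \in Y$. Since $B_\alpha \subseteq S = \Soc_n(S)$, every $a \in B_\alpha$ lies in $B_\alpha \cap \Soc_n(S)$, which by \cref{lemma.sock} is contained in $\Soc_n(B_\alpha)$. Hence $B_\alpha = \Soc_n(B_\alpha)$, so $B_\alpha$ is left annihilator nilpotent of index at most $n$. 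This direction is essentially immediate once \cref{lemma.sock} is in hand.

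For the converse, assume each $B_\alpha$ is left annihilator nilpotent of index $k_\alpha$, and set $k := \max\{k_\alpha \mid \alpha \in Y\}$ (finite since a finite dual weak brace has $Y$ finite; in the general statement one should add the standing assumption, or note $Y$ finite, so that the maximum exists). I would show $S = \Soc_k(S)$ by proving, by induction on $j$, that $\bigcup_{\alpha\in Y} \Soc_j(B_\alpha) \subseteq \Soc_j(S)$ for all $j$. Combined with the reverse inclusion $\Soc_j(S) \cap B_\alpha \subseteq \Soc_j(B_\alpha)$ from \cref{lemma.sock}, this pins down $\Soc_j(S)$ exactly as the disjoint union $\bigcupdot_{\alpha} \Soc_j(B_\alpha)$, provided one first checks that this union is genuinely an ideal of $S$ — which by \cref{teo_descr_ideali} amounts to verifying $\phi_{\alpha,\beta}(\Soc_j(B_\alpha)) \subseteq \Soc_j(B_\beta)$ whenever $\alpha > \beta$. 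For the base case $j=0$ both sides are $\E(S)$. For the inductive step, take $a \in \Soc_{j+1}(B_\alpha)$ and $b \in S$, say $b \in B_\beta$; using $a \circ b = \phi_{\alpha,\alpha\beta}(a)\circ\phi_{\beta,\alpha\beta}(b)$ and the analogous formulas for $+$ and for $\cdot$ (which, being built from $+$ and $\circ$, is likewise computed after pushing both arguments down to $B_{\alpha\beta}$), one gets $a\cdot b = \phi_{\alpha,\alpha\beta}(a)\cdot\phi_{\beta,\alpha\beta}(b)$ with $\phi_{\alpha,\alpha\beta}(a) \in \Soc_{j+1}(B_{\alpha\beta})$ by the $\phi$-invariance just noted; hence $a \cdot b \in \Soc_j(B_{\alpha\beta}) \subseteq \Soc_j(S)$ by the inductive hypothesis, and similarly $[a,b]_+ \in \Soc_j(S)$. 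By the description of $\Soc_{j+1}$ recalled after \cref{def_soc_Series}, this gives $a \in \Soc_{j+1}(S)$.

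Once $\Soc_k(S) = \bigcupdot_\alpha \Soc_k(B_\alpha) = \bigcupdot_\alpha B_\alpha = S$, the brace $S$ is left annihilator nilpotent of index at most $k$; and it cannot have smaller index, because if $S = \Soc_m(S)$ with $m < k$ then the forward direction would force every $B_\alpha = \Soc_m(B_\alpha)$, contradicting $k_{\alpha_0} = k > m$ for the $\alpha_0$ attaining the maximum. Hence the index of $S$ equals $k = \max_\alpha k_\alpha$, as claimed. The main obstacle I anticipate is the $\phi$-invariance claim $\phi_{\alpha,\beta}(\Soc_j(B_\alpha)) \subseteq \Soc_j(B_\beta)$: it needs its own induction on $j$, using that $\phi_{\alpha,\beta}$ is a skew brace homomorphism (hence compatible with $\cdot$ and with $[\,\cdot\,,\,\cdot\,]_+$) and surjectivity-type bookkeeping — for $c \in B_\beta$ one wants a preimage or at least enough elements of $B_\alpha$ mapping appropriately; here one exploits that $\phi_{\alpha,\beta} = \phi_{\alpha,\alpha\beta}$ when $\alpha \geq \beta$ together with the fact that $\Soc_j$ is defined by a universally quantified condition over \emph{all} $b$, so that the image of a class satisfying it in $B_\alpha$ still satisfies the (weaker, since ranging over the image) condition in $B_\beta$. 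Modulo that lemma, the rest is the routine two-sided inclusion argument sketched above.
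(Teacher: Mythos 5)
Your forward direction is precisely the paper's intended argument: the proposition is presented there as a direct consequence of \cref{lemma.sock}, and $B_\alpha = B_\alpha\cap\Soc_n(S)\subseteq\Soc_n(B_\alpha)$ settles it immediately. That part is correct.

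The converse, however, hinges on a claim that is false. You assert the $\phi$-invariance $\phi_{\alpha,\beta}\left(\Soc_j(B_\alpha)\right)\subseteq\Soc_j(B_\beta)$ and from it deduce $\Soc_j(S)=\bigcupdot_{\alpha}\Soc_j(B_\alpha)$. The paper itself warns (in the discussion preceding \cref{soc-ideale}, illustrated by \cref{ex_C3_sym3}) that $\Soc(S)$ is in general \emph{not} the strong semilattice of the $\Soc(B_\alpha)$: only $\Soc(S)\subseteq\bigcupdot_{\alpha}\Soc(B_\alpha)$ holds. Your justification --- that the image ``still satisfies the (weaker, since ranging over the image) condition in $B_\beta$'' --- inverts the quantifier: membership in $\Soc_j(B_\beta)$ is a condition quantified over \emph{all} of $B_\beta$, not merely over $\phi_{\alpha,\beta}(B_\alpha)$, so it is strictly stronger, not weaker. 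The failure persists under the hypotheses of your converse: take $B_\alpha$, $B_\beta$ to be the trivial skew braces on $C_4$ and on the dihedral group $D_4$ (left annihilator nilpotent of indices $1$ and $2$), with $\phi_{\alpha,\beta}$ sending a generator of $C_4$ to a rotation of order $4$. Then $\Soc_1(B_\alpha)=C_4$, yet $\phi_{\alpha,\beta}(C_4)\not\subseteq Z(D_4)=\Soc_1(B_\beta)$, and $\Soc_1(S)\cap B_\alpha$ is the subgroup of order $2$, so both your base case ($j=1$) and the ideal-structure step via \cref{teo_descr_ideali} break --- even though the final conclusion $\Soc_2(S)=S$ happens to hold in that example. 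A correct proof of the upper bound must instead work with the genuinely smaller pieces $\Soc_j(S)\cap B_\alpha$, or with sets of the form $\left\{a\in B_\alpha \mid \phi_{\alpha,\beta}(a)\in\Soc_j(B_\beta)\ \text{for all}\ \beta\leq\alpha\right\}$, and exploit that at level $k=\max_\alpha k_\alpha$ the constraints coming from the lower components become vacuous because $\Soc_k(B_\beta)=B_\beta$ for every $\beta$. Your argument that the index of $S$ cannot be smaller than $\max_\alpha k_\alpha$ (via the forward direction) is fine, as is your remark that one must assume the maximum exists.
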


\medskip

\section{Annihilator nilpotency of dual weak braces}

Let us start by introducing a further ideal of a dual weak brace $(S, +,\circ)$, that is the \emph{annihilator} of $S$. Denoted by $\zeta\left(S,\circ\right)$ the center of $\left(S,\circ\right)$, it is the set
    \begin{align*}
        \Ann\left(S\right) := \Soc\left(S\right)\cap \zeta\left(S,\circ\right),
    \end{align*}
namely,
$\Ann\left(S\right) = \{a \ | \ a\in S\quad  \forall\, b\in B\quad a + b = b + a = a\circ b = b\circ a\}$. Note that the definition is consistent with that given in \cite[Definition 7]{CCoSt19} for skew braces.
\begin{prop}
Let $(S, +, \circ)$ be a dual weak brace. Then, $\Ann\left(S\right)$ is an ideal of $S$.\end{prop}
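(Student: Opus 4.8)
The plan is to verify the three defining conditions of an ideal (\cref{def:ideal-dwb}) for $\Ann(S) = \Soc(S) \cap \zeta(S,\circ)$, leveraging the fact that $\Soc(S)$ is already known to be an ideal (from \cite[Proposition 23]{CaMaSt23}, recalled earlier) and that $\zeta(S,\circ)$ is the center of a Clifford semigroup. First I would record the equivalent description $\Ann(S) = \{a \mid \forall b\in S,\ a+b = b+a = a\circ b = b\circ a\}$, which makes all the closure checks transparent. Since $\E(S)$ is central in both $(S,+)$ and $(S,\circ)$ (the duality hypothesis plus \cref{pro_dual_idemp}), we get $\E(S)\subseteq \Ann(S)$ immediately, so $\Ann(S)$ is full.

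Next I would check that $\Ann(S)$ is a normal subsemigroup of both $(S,+)$ and $(S,\circ)$. Closure under $+$ and $\circ$ and under inverses is routine: if $a,a'\in\Ann(S)$ then for every $b\in S$ one commutes $a$ and $a'$ past $b$ in either operation, and since $\Ann(S)\subseteq\Soc(S)$ we can also freely interchange $+$ and $\circ$ among the annihilator elements themselves; inverses $-a$ and $a^-$ inherit centrality by the standard inverse-semigroup identities $(xy)^{-1}=y^{-1}x^{-1}$. For normality, i.e. $-b + \Ann(S) + b \subseteq \Ann(S)$ and $b^-\circ\Ann(S)\circ b\subseteq\Ann(S)$, observe that if $a\in\Ann(S)$ then $a$ commutes additively with $b$, so $-b+a+b = -b+b+a = b^0 + a$; I then need $b^0+a\in\Ann(S)$, which follows because $b^0\in\E(S)\subseteq\Ann(S)$ and $\Ann(S)$ is closed under $+$. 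The multiplicative conjugation is handled identically using $a\circ b = b\circ a$ and $b^-\circ b = b^0$.

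The remaining condition is $\lambda$-invariance: $\lambda_c(a)\in\Ann(S)$ for all $c\in S$, $a\in\Ann(S)$. Here I would use that $a\in\Fix(S)$ (indeed $a\in\Soc(S)\subseteq\Fix(S)$ up to the commutativity clause, but directly $a+b=a\circ b$ for all $b$ gives $a\in\Fix(S)$), so by \eqref{eq:lambda-cdot} and \cref{idemp.puntino}, $\lambda_c(a) = c\cdot a + a = c^0 + a^0 + a = c^0 + a$ using $c\cdot a\in\E(S)$ when... — wait, more carefully: $a\in\Fix(S)$ means $a+b=a\circ b$, not $b+a=b\circ a$, so it is $a$ on the \emph{left}; what I actually need is that $\lambda_c(a)$ is again in the annihilator. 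Since $a\in\Soc(S)$ and $\Soc(S)$ is an ideal, $\lambda_c(a)\in\Soc(S)$ already. So it only remains to show $\lambda_c(a)\in\zeta(S,\circ)$. Using \cref{pro_dual_idemp}-type manipulations, $\lambda_c(a) = -c + c\circ a$; since $a$ commutes with everything additively and multiplicatively, I expect $\lambda_c(a)$ to simplify to $c^0 + a$ (an idempotent plus $a$), which lies in $\zeta(S,\circ)$ because both summands do and $\Ann(S)$ — once we know it is closed under $+$ — contains it. The main obstacle is this last simplification: pinning down $-c+c\circ a$ using the axioms and the strong commutativity of $a$, probably via $c\circ a = c + \lambda_c(a)$ (\cref{prop_weak}-3) combined with $c\circ a = a\circ c$ and $a\circ c = a + \lambda_a(c)$; equating gives $\lambda_c(a) = -c + a + \lambda_a(c) = -c + a + a^0 + c = -c + a + c$ and then additive commutativity of $a$ collapses this to $c^0 + a$, as desired. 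I would then assemble these pieces and conclude $\Ann(S)$ is an ideal, finishing the proof.
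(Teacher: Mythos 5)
Your proof is correct and follows essentially the same route as the paper's: both verify the ideal axioms for $\Ann(S)=\Soc(S)\cap\zeta(S,\circ)$ by leaning on $\Soc(S)$ being an ideal and on the centrality of idempotents, and both reduce the additive conjugate, the $\lambda$-image, and the multiplicative conjugate of an annihilator element to the same expression $-a+x+a=a^0+x$. The only difference is cosmetic: you collapse the conjugate to $a^0+x$ at the outset and invoke closure under $+$, whereas the paper verifies the centrality of $-a+x+a$ by a direct displayed computation before making the same identifications.
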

\begin{proof}
  Clearly, $E\left(S\right)$ is contained in $\Ann\left(S\right)$.  
  Moreover, if $x\in \Ann\left(S\right)$ and $a\in S$, 
  \begin{align*}
      b\circ\left(-a + x + a\right)
      &= b - b\circ a + b^0 + b\circ x - b + b\circ a &\mbox{by \cref{prop_weak}-$2.$}\\
      &= b - b\circ a + x\circ b - b  + b\circ a 
      &\mbox{$x\in \zeta\left(S,\circ\right)$}\\
      &=  b - b\circ a + x + b^0  + b\circ a
      &\mbox{$x\in \Soc\left(S\right)$}\\
      &= b + x + \left(b\circ a\right)^0
      &\mbox{$x\in \Soc\left(S\right)$}\\
      &= b + x  + a^0+ b^0\\
      &= \left(- a + x + a \right)+ b &\mbox{$x\in \Soc\left(S\right)$}\\
      &= \left(- a + x + a \right)\circ b.
      &\mbox{$- a + x + a\in\Soc\left(S\right)$}
  \end{align*}
  for every $b\in S$. Besides, one has that $\lambda_a\left(x\right) = - a + x\circ a =-a+x+a\in\Ann\left(S\right)$.
  Finally, $a^-\circ x\circ a = a^0\circ x=\lambda_a(x)\in\Ann\left(S\right)$, which completes the proof.
\end{proof}

\medskip

As it happens for the socle, in general, the annihilator of a dual weak brace $S$ does not coincide with the union of the annihilators of each skew brace $B_\alpha$ (see, for instance, \cref{ex_C3_sym3}). Similarly to \cref{soc-ideale}, we obtain the following result.

\begin{prop}
  Let  $S=[Y; B_{\alpha}; \phii{\alpha}{\beta}]$ be a dual weak brace, $\psi_{\alpha,\beta} := {\phi_{\alpha,\beta}}_{|_{\Ann(B_\alpha)}}$, for all $\alpha \geq \beta$, and assume that $I:=[Y; \Ann\left(B_{\alpha}\right); \psi_{\alpha, \beta}]$ is an ideal of $S$. Then, $I=\Ann(S)$.
  \end{prop}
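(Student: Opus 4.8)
The plan is to mimic the proof of \cref{soc-ideale} almost verbatim, replacing the socle by the annihilator throughout and adding the extra verification that the relevant elements are central for $\circ$ as well as for $+$. First I would record, via \cref{teo_descr_ideali} applied to the hypothesis that $I=[Y;\Ann(B_\alpha);\psi_{\alpha,\beta}]$ is an ideal of $S$, the compatibility condition $\phi_{\alpha,\beta}(\Ann(B_\alpha))\subseteq\Ann(B_\beta)$ for all $\alpha>\beta$. Next I would show $I\subseteq\Ann(S)$: pick $\alpha\in Y$ and $a\in\Ann(B_\alpha)$; for an arbitrary $b\in S$ there is $\beta\in Y$ with $b\in B_\beta$, and by the strong-semilattice formulas for $+$ and $\circ$ we have $a+b=\phi_{\alpha,\alpha\beta}(a)+\phi_{\beta,\alpha\beta}(b)$ and likewise for $\circ$; since $\phi_{\alpha,\alpha\beta}(a)\in\Ann(B_{\alpha\beta})$ by the compatibility condition, and $\phi_{\beta,\alpha\beta}(b)\in B_{\alpha\beta}$, all four of the expressions $a+b$, $b+a$, $a\circ b$, $b\circ a$ collapse to the common value computed inside the skew brace $B_{\alpha\beta}$, whence $a\in\Ann(S)$.

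For the reverse inclusion $\Ann(S)\subseteq I$, I would take $a\in\Ann(S)$, say $a\in B_\alpha$, and argue that $a\in\Ann(B_\alpha)$. This is where the containment $\Ann(S)\subseteq\bigcupdot_{\alpha\in Y}B_\alpha$ (which follows from $\E(S)\subseteq\Ann(S)$ and each $a\in S$ lying in a unique $B_\alpha$, together with the fact that $\Ann(S)$ is a full inverse subsemigroup) is used. Restricting the defining conditions $a+b=b+a=a\circ b=b\circ a$ to $b\in B_\alpha$ and noting $\phi_{\alpha,\alpha}=\id_{B_\alpha}$, these conditions become exactly the statement that $a\in\Soc(B_\alpha)\cap\zeta(B_\alpha,\circ)=\Ann(B_\alpha)$. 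Hence $a\in B_\alpha\cap I=\Ann(B_\alpha)$, giving $\Ann(S)\subseteq I$ and therefore equality.

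The main subtlety — the analogue of the phenomenon flagged before \cref{soc-ideale} that the two inclusions need not both be available without an extra hypothesis — is precisely the role played by the assumption that $I$ is an ideal of $S$: without it one has no guarantee that $\phi_{\alpha,\beta}$ maps $\Ann(B_\alpha)$ into $\Ann(B_\beta)$, and the inclusion $I\subseteq\Ann(S)$ can fail, exactly as in \cref{ex_C3_sym3}. So I expect the only real work to be the careful bookkeeping in the first inclusion: verifying that membership of $\phi_{\alpha,\alpha\beta}(a)$ in $\Ann(B_{\alpha\beta})$ genuinely forces the four products to agree, which reduces to the skew-brace identities $x+y=y+x=x\circ y=y\circ x$ for $x\in\Ann(B_{\alpha\beta})$, $y\in B_{\alpha\beta}$, combined with the defining formulas for $+$ and $\circ$ on $S$. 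Everything else is a routine transcription of the socle argument, so I would keep the write-up short and parallel to the proof of \cref{soc-ideale}.
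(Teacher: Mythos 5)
Your proposal is correct and takes essentially the same route as the paper, which gives no explicit proof here but only the remark that the argument is ``similar to'' that of \cref{soc-ideale}: your first inclusion is a verbatim transcription of that socle argument with the extra $\circ$-centrality check, and your reverse inclusion spells out the containment the paper treats as clear (note only the small slip that the trivial containment you invoke should read $\Ann(S)\subseteq\bigcupdot_{\alpha\in Y}\Ann(B_\alpha)$, which is exactly what your restriction-to-$b\in B_\alpha$ argument proves).
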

  
\medskip

\begin{rem}\label{rem_annS_puntino}
As a consequence of \cref{idemp.puntino}, if $(S, +, \circ)$ is a dual weak brace, it is also easy to see that  \begin{align*}
     \Ann\left(S \right)&=\{a \, | \, a \in S,\,\,\, \forall \ b \in S \quad a\cdot b=b \cdot a =[a,b]_{+}=a^0+b^0\}.
\end{align*}
\end{rem}

\medskip

Similarly to \cite{JeVAnVen22}, as is usual in ring theory,  we define the $k$-th annihilator of a dual weak brace $(S,+, \circ)$.
\begin{defin} Let $(S,+, \circ)$ be a dual weak brace. Set $\Ann_0\left(S\right):= E\left(S\right)$, we define $\Ann_k\left(S\right)$ to be the ideal of $S$ containing $\Ann_{k-1}\left(S\right)$ such that
    \begin{align*}
       \Ann_k\left(S\right)/\Ann_{k-1}\left(S\right)=\Ann\left(S/\Ann_{k-1}\left(S\right)\right),
    \end{align*}
    for every positive integer $k$.\end{defin}

\noindent Note that 
$\Ann_k\left(S\right)=\{a \, | \, a\in S, \, \,  \forall\, b \in S \quad a \cdot b, \,  b \cdot a, \, [a,b]_{+} \in \Ann_{k-1}\left(S\right)\},$ for every positive integer $k$.

\medskip

\begin{defin}
A dual weak brace $(S, +, \circ)$ is said to be \emph{annihilator nilpotent} if  there exists $n \in \mathbb{N}$ such that $\Ann_n\left(S\right) = S$.
\end{defin}
\smallskip
Clearly, any ideal $I$ and quotient $S/I$ of an annihilator nilpotent dual weak brace $S$ are annihilator nilpotent. The next example shows that, unlike what happens in ring theory, the converse is not true.

\begin{ex}
    Let $B:= \left(\mathbb{Z}_6,+,\circ\right)$ be the brace having as an additive group the cyclic group of $6$ elements and multiplication defined by $a\circ b := a + \left(-1\right)^{a}b$, for all $a,b\in B$. Then, $I:=\{0,1,2\}$ is an ideal of $B$ such that $\Ann\left(I\right) = I$, so $I$ is annihilator nilpotent. Clearly, $\Ann\left(B/I\right) = B/I$, so $B/I$ also is annihilator nilpotent. However, $B$ is not annihilator nilpotent since  $\Ann\left(B\right) = \{0\}$.
\end{ex}

\medskip

Following \cite[Definition 2.3]{BoJed21x}, given a dual weak brace $(S, +, \circ)$ and  an ideal $I$ of $S$, set $\Gamma_0\left(I\right):= I$, we can inductively define 
\begin{align*}
    \Gamma_k\left(I\right):= 
    \bigl\langle \,\Gamma_{k-1}\left(I\right) \cdot S, \  S \cdot \Gamma_{k-1}\left(I    \right), \ [\Gamma_{k-1}\left(I\right), S]_+ \,\bigr\rangle_+.
\end{align*}
 Observe that, set $\Gamma(I):=\Gamma_1(I)$, then $\Gamma_k(I)=\Gamma\left(\Gamma_{k-1}(I)\right)$, for every $k\in\mathbb{N}$.

\begin{prop}\label{prop_gamma}
    Let $(S, +, \circ)$ be a dual weak brace and $I$ an ideal of $S$. Then, $\Gamma_k(I)$ is an ideal of $S$, for every $k \in \mathbb{N}$.\end{prop}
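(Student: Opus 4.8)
The plan is to prove this by induction on $k$. For the base case $k=0$, $\Gamma_0(I) = I$ is an ideal of $S$ by hypothesis. For the inductive step, assume $\Gamma_{k-1}(I)$ is an ideal of $S$; since $\Gamma_k(I) = \Gamma(\Gamma_{k-1}(I))$, it suffices to prove the single assertion that $\Gamma(J)$ is an ideal of $S$ whenever $J$ is an ideal of $S$. So the real content is one step: showing $\Gamma(I) = \bigl\langle I\cdot S,\ S\cdot I,\ [I,S]_+\bigr\rangle_+$ is an ideal when $I$ is.

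To verify the three conditions of \cref{def:ideal-dwb} for $\Gamma(I)$, first note $\Gamma(I)$ contains $\E(S)$ (since $I$ does and $\E(S)$ is contained in each generating piece via \eqref{idemp_punt}) and is by construction an inverse subsemigroup of $(S,+)$; the $+$-normality, i.e. condition $1.$, follows once we have $\lambda$-invariance together with conjugation-closure, and in a Clifford setting it is cleanest to invoke \cite[Exercises III.1.9(ii)]{Pe84} as was done in the proof of \cref{teo_descr_ideali}, reducing everything to checking $\lambda_a(\Gamma(I)) \subseteq \Gamma(I)$ for all $a \in S$ (which gives conditions $1.$ and $2.$) and then the $\circ$-normality $a^- \circ \Gamma(I) \circ a \subseteq \Gamma(I)$. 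For $\lambda$-invariance it is enough to check it on generators: for $x \in I$, $s \in S$, we must show $\lambda_a(x\cdot s)$, $\lambda_a(s\cdot x)$, and $\lambda_a([x,s]_+)$ all lie in $\Gamma(I)$. Using $\lambda_a(u) = a\cdot u + u$ from \eqref{eq:lambda-cdot}, together with the identities in \cref{prop:cdot-circ} (especially the associativity-type identity \ref{ref_puntino3}, $(a\circ b)\cdot c = a\cdot(b\cdot c) + b\cdot c + a\cdot c$), one rewrites $a\cdot(x\cdot s)$ as a combination of terms each of which is either in $I\cdot S$, $S\cdot I$ (using that $I$ is an ideal, so $I\cdot S \subseteq I$ and $S\cdot I\subseteq I$ by \cref{lemma-ideali}-$2.$), or lands back in $\Gamma(I)$; the bracket term is handled analogously using the biadditivity-up-to-idempotents identities in \cref{prop:cdot-circ}-$2.$ and its corollary.

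For the $\circ$-conjugation closure, I would again reduce to generators and mimic the computation in the proof of \cref{lemma-ideali}-$3.$: for $x \in I$ and $s \in S$, expand $a^- \circ (x\cdot s) \circ a$ using \cref{prop_weak} and \cref{prop:cdot-circ}-$1.$ ($a\circ b = a + a\cdot b + b$) to express it, modulo terms already known to be in $\Gamma(I)$, as $\lambda$-images of elements of $\Gamma(I)$, which are controlled by the $\lambda$-invariance just established. The fact that $I$ is itself an ideal is used repeatedly to absorb any stray $I\cdot S$ or $S\cdot I$ contributions into $I \subseteq \Gamma(I)$ (here we use $I = \Gamma_0(I) \subseteq \Gamma_1(I)$, which is immediate from \eqref{idemp_punt} since, e.g., $x = x^0 + x$ and one expresses $x$ through the generators — more carefully, $\Gamma(I) \supseteq I$ holds because for $x\in I$ one has $x \cdot e \in \Gamma(I)$ for $e\in\E(S)$ and the generating set together with $+$-closure recovers $I$; this containment is standard and I would state it as part of the setup).

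The main obstacle I anticipate is purely computational bookkeeping: the generating set of $\Gamma(I)$ mixes three kinds of terms ($\cdot$ on the left, $\cdot$ on the right, and additive commutators), and the $\cdot$ operation is neither associative nor additive on the nose — it only satisfies the skew identities of \cref{prop:cdot-circ} with correcting idempotent/conjugation terms. So each verification ($\lambda$-invariance on each of the three generator types, and $\circ$-conjugation on each of the three) becomes a careful manipulation keeping track of $b^0$-type correction terms, all of which are idempotents and hence harmless by \eqref{eq:idem+circ} and \eqref{idemp_punt}. None of the individual steps is deep, but the proof is long; in the write-up I would do the $\lambda$-invariance carefully for the $I\cdot S$ generator and assert that the other cases are analogous, citing the template computations already in \cref{lemma-ideali} and the corollary to \cref{prop:cdot-circ}.
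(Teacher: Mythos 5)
Your overall strategy is a genuinely different one from the paper's: you verify the conditions of \cref{def:ideal-dwb} for $\Gamma(I)$ directly inside $S$ by computing on generators, whereas the paper writes $S=[Y;B_\alpha;\phi_{\alpha,\beta}]$, decomposes $I$ into ideals $I_\alpha$ of the skew braces $B_\alpha$ via \cref{teo_descr_ideali}, uses that $\Gamma_k$ of an ideal of a \emph{skew brace} is already known to be an ideal, and only checks by induction that $\phi_{\alpha,\beta}(\Gamma_k(I_\alpha))\subseteq \Gamma_k(I_\beta)$ (immediate since $\phi_{\alpha,\beta}$ respects $\cdot$ and $[\,\cdot\,,\cdot\,]_+$), so that \cref{teo_descr_ideali} applies again. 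The paper's route is a few lines; yours, if completed, would be self-contained but long. Your inductive reduction to the single statement ``$\Gamma(J)$ is an ideal whenever $J$ is'' is fine and matches the observation $\Gamma_k(I)=\Gamma(\Gamma_{k-1}(I))$.

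However, there is a genuine error in your setup: the containment $I=\Gamma_0(I)\subseteq\Gamma_1(I)$, which you say is ``standard'' and which you ``use repeatedly to absorb stray contributions,'' is false. For $x\in I$ and $e\in\E(S)$ the element $x\cdot e$ is an \emph{idempotent} by \eqref{idemp_punt}, so it does not recover $x$; and in a trivial brace one has $x\cdot s=0=[x,s]_+$ for all $x,s$, so $\Gamma(I)=\E(S)$ while $I$ may be any ideal. The true containment is the reverse one: since $I$ is an ideal, $I\cdot S\subseteq I$ and $S\cdot I\subseteq I$ by \cref{lemma-ideali}-$2.$ and $[I,S]_+\subseteq I$ by $+$-normality, whence $\Gamma(I)\subseteq I$ and the $\Gamma_k(I)$ form a \emph{descending} chain (this is why it is called a lower series). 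The correct absorption mechanism is therefore $\Gamma(I)\cdot S\subseteq I\cdot S\subseteq\Gamma(I)$ and similarly on the other side, not the one you describe. Beyond this, the hard parts of your plan (the $\lambda$-invariance of the $S\cdot I$ and $[I,S]_+$ generators, and especially the $\circ$-conjugation closure, which is not additive and must be routed through the identity $a\circ x\circ a^-=a+\lambda_a(x+x\cdot a^-)-a$ of \cref{lemma-ideali}-$3.$) are only asserted, not carried out; they appear feasible once the containment is corrected, but as written the proof is incomplete.
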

    \begin{proof}
        Assuming $S=[Y; B_\alpha;\phii{\alpha}{\beta}]$, by an easy induction, one can show that $\phi_{\alpha, \beta}\left(\Gamma_k(B_\alpha)\right)\subseteq \Gamma_k\left(B_\beta\right)$, for any $\alpha > \beta$. Hence, the proof follows by \cref{teo_descr_ideali}.
    \end{proof}
    \smallskip
    
One can show that if $(B, +, \circ)$ is a skew brace and $I$ an ideal of $B$, then $\Gamma(I) \subseteq [I, B]$, where $[I, B]$ denotes the \emph{commutator} introduced in \cite{BoFaPo23}, which is the smallest ideal of $B$ containing $[I, B]_+, [I, B]_\circ$, and the set $\{i \circ b-(i+b) \ \mid \ i \in I, \ b \in B\}$. We can not establish whether $\Gamma(I) = [I, B]$. \\
We give the following lemmas, which is useful to prove the main result of this section and can be compared with
\cite[Proposition 15]{BaEsJiPe23x} regarding commutators in the context of skew braces.

\begin{lemma}\label{lemma_MN}
Let $M,N$ ideals of a dual weak brace $(S,+ , \circ)$ with $M\subseteq N$. The following statements are equivalent:
\begin{enumerate}
    \item[(i)] $N/M \subseteq \Ann(S/M)$,
    \item[(ii)] $\Gamma(N)\subseteq M$.
\end{enumerate}\end{lemma}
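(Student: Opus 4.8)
The statement is an ``if and only if'' between $N/M\subseteq\Ann(S/M)$ and $\Gamma(N)\subseteq M$, so I would prove the two implications separately, working in the quotient $S/M$ throughout. Recall from \cref{rem_annS_puntino} that membership in $\Ann(S/M)$ can be tested via the operation $\cdot$: an element $\bar a\in S/M$ lies in $\Ann(S/M)$ if and only if $\bar a\cdot\bar b=\bar b\cdot\bar a=[\bar a,\bar b]_+=\bar a^0+\bar b^0$ for every $\bar b\in S/M$, i.e. (using \cref{idemp.puntino}) all three of $\bar a\cdot\bar b$, $\bar b\cdot\bar a$, $[\bar a,\bar b]_+$ are idempotent. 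Under the canonical epimorphism $\chi\colon S\to S/M$ this translates, for $n\in N$ and $s\in S$, into the conditions $n\cdot s\in M$, $s\cdot n\in M$, $[n,s]_+\in M$.

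\textbf{Step 1: (i) $\Rightarrow$ (ii).} Assume $N/M\subseteq\Ann(S/M)$. Then for all $n\in N$ and $s\in S$ we have $n\cdot s\in M$, $s\cdot n\in M$, and $[n,s]_+\in M$. Since $M$ is an ideal, it is in particular a normal (hence inverse) subsemigroup of $(S,+)$, so the additive inverse subsemigroup generated by all such elements $n\cdot s$, $s\cdot n$, $[n,s]_+$ is contained in $M$. By the definition $\Gamma(N)=\Gamma_1(N)=\bigl\langle N\cdot S,\ S\cdot N,\ [N,S]_+\bigr\rangle_+$, this gives exactly $\Gamma(N)\subseteq M$.

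\textbf{Step 2: (ii) $\Rightarrow$ (i).} Conversely, assume $\Gamma(N)\subseteq M$. Then for every $n\in N$ and $s\in S$ the generators $n\cdot s$, $s\cdot n$, $[n,s]_+$ all lie in $\Gamma(N)\subseteq M$. Passing to $S/M$, this says $\bar n\cdot\bar s$, $\bar s\cdot\bar n$, and $[\bar n,\bar s]_+$ are all in $\chi(M)=\E(S/M)$, i.e. idempotent; hence by \cref{idemp.puntino} $\bar n\cdot\bar s=\bar n^0+\bar s^0$, and similarly for the other two. By \cref{rem_annS_puntino} this is precisely the condition that $\bar n\in\Ann(S/M)$, so $N/M\subseteq\Ann(S/M)$.

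\textbf{Expected main obstacle.} The only delicate point is bookkeeping with the quotient: one must verify that $\chi$ carries $n\cdot s$ to $\bar n\cdot\bar s$ (i.e.\ that $\chi$ is a homomorphism for the derived operation $\cdot$, which follows since it is a homomorphism for $+$ and $\circ$), and that $\chi(M)=\E(S/M)$, so that ``lies in $M$'' becomes ``is idempotent in $S/M$''. Both are standard properties of the congruence $\sim_M$ from \cite[Theorem 21]{CaMaSt23}, recalled before \cref{teo_omo}. I would simply invoke these and keep the argument at the level of the characterizations in \cref{idemp.puntino} and \cref{rem_annS_puntino}, avoiding any lengthy term manipulation.
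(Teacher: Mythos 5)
Your proof is correct and follows essentially the same route as the paper: both reduce the statement to the characterization of the annihilator via $\cdot$ and $[\,\cdot\,,\cdot\,]_+$ (\cref{rem_annS_puntino} together with \cref{idemp.puntino}) and to the fact that the generators of $\Gamma(N)$ land in $M$. The only difference is cosmetic: the paper carries out the implication (i)$\Rightarrow$(ii) by an explicit computation with coset representatives and then invokes the remark for the converse, whereas you run both directions uniformly through the quotient map, using that $\chi$ respects $\cdot$ and that $\chi^{-1}\left(\E(S/M)\right)=M$.
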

\begin{proof}
Initially, assume that $N/M \subseteq \Ann(S/M)$. Hence, $M+(n \circ s)=M+(n+s)$, for all $n \in N$ and $s \in S$. Then,
\begin{align*}
    n \cdot s=-n+\underbrace{n^0+\left(n\circ s\right)}_{M+(n \circ s)}-s=\underbrace{(-n+m+n)}_{M}+s-s \in M,
\end{align*}
for some $m \in M$. Similarly, $s \cdot n \in M$. Furthermore, since $M+(n+s)=(s+n)+M$,
\begin{align*}
    [n,s]_+=-n-s+\underbrace{n^0+n+s}_{M+(n+s)}=\underbrace{-n-s+s+n}_{M}+m \in M,
\end{align*}
for some $m \in M$. Therefore, $\Gamma(N)\subseteq M$.
The rest of the claim directly follows by \cref{rem_annS_puntino}.
\end{proof}

\medskip

\begin{lemma}\label{lemma_GAMMA_somma}
Let $(S, +, \circ)$ be a dual weak brace and $M, N$ ideals of $S$. Then, it holds that $\Gamma(M+N)=\Gamma(M)+\Gamma(N)$.\end{lemma}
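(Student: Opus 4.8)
The plan is to prove the two inclusions $\Gamma(M+N)\subseteq\Gamma(M)+\Gamma(N)$ and $\Gamma(M)+\Gamma(N)\subseteq\Gamma(M+N)$ separately, the second being essentially immediate from the monotonicity of $\Gamma$. For the easy direction, since $M\subseteq M+N$ and $N\subseteq M+N$ (both are ideals by the corollary following \cref{teo_omo}), one checks directly from the definition that $\Gamma$ is monotone: if $I\subseteq J$ are ideals then $I\cdot S\subseteq J\cdot S$, $S\cdot I\subseteq S\cdot J$, and $[I,S]_+\subseteq[J,S]_+$, whence $\Gamma(I)\subseteq\Gamma(J)$. Applying this with $J=M+N$ gives $\Gamma(M)\subseteq\Gamma(M+N)$ and $\Gamma(N)\subseteq\Gamma(M+N)$, and since $\Gamma(M+N)$ is an additive inverse subsemigroup it contains $\Gamma(M)+\Gamma(N)$.

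The substantial direction is $\Gamma(M+N)\subseteq\Gamma(M)+\Gamma(N)$. Write $K:=\Gamma(M)+\Gamma(N)$; by \cref{prop_gamma} this is an ideal of $S$ (a sum of two ideals is an ideal, as recorded after the definition of ideal). It suffices to show that each of the three families of generators of $\Gamma(M+N)$ lies in $K$, i.e. that $(M+N)\cdot S\subseteq K$, $S\cdot(M+N)\subseteq K$, and $[M+N,S]_+\subseteq K$. First I would reduce each such generator to a sum of generators coming separately from $M$ and from $N$. For a typical element $m+n$ with $m\in M$, $n\in N$ and $s\in S$: using \cref{prop:cdot-circ}-\ref{ref_puntino3} together with $(e+a)\cdot b = e\cdot b + a\cdot b$ and the corollary after \cref{prop:cdot-circ} (the bi-additivity of $\cdot$ modulo idempotents), one expands $(m+n)\cdot s$ into a sum whose terms are, up to conjugation by elements of $S$ and up to idempotents, of the form $m'\cdot s'$ with $m'\in M$ or $n'\cdot s'$ with $n'\in N$; since $M$ and $N$ are ideals, their $\cdot$-products with $S$ (and conjugates thereof) already lie in $M\cdot S\subseteq\Gamma(M)$ and $N\cdot S\subseteq\Gamma(N)$ respectively, hence in $K$. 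The same treatment handles $s\cdot(m+n)$ via \cref{prop:cdot-circ}-\ref{ref_puntino2} and the companion corollary, and $[m+n,s]_+$ by the standard commutator identity $[m+n,s]_+ = (\text{conjugate of }[n,s]_+) + [m,s]_+$ in the Clifford semigroup $(S,+)$.

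I expect the main obstacle to be the bookkeeping of idempotents and conjugations: unlike in a group, expansions in a Clifford semigroup produce stray terms $a^0$, $(m\circ s)^0$, etc., and the additive subsemigroups generated are only \emph{inverse} subsemigroups rather than subgroups, so one must argue that all these correction terms already lie in $\Gamma(M)$ or $\Gamma(N)$ (each of which contains $\E(S)$ by \cref{prop_gamma}, being an ideal). The key technical facts are that every ideal $I$ satisfies $S\cdot I\subseteq I$, $I\cdot S\subseteq I$ by \cref{lemma-ideali}-$2.$, that $\lambda_a(I)\subseteq I$, and that conjugation $a\circ(-)\circ a^-$ preserves ideals; combined with the additive identities of \cref{prop:cdot-circ} and its corollary, these let every cross-term be absorbed into $\Gamma(M)+\Gamma(N)$. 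Once both inclusions are established, the equality $\Gamma(M+N)=\Gamma(M)+\Gamma(N)$ follows, completing the proof.
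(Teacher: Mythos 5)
Your proposal is correct and follows essentially the same route as the paper, whose own proof is just a one-line appeal to the definition of $\Gamma$ and to \cref{prop:cdot-circ}-\ref{ref_puntino2} and \ref{ref_puntino3}; you simply make explicit the two inclusions, the monotonicity of $\Gamma$, and the absorption of the conjugation and idempotent correction terms via normality and $\lambda$-invariance of the ideals $\Gamma(M)$ and $\Gamma(N)$. No genuinely different idea is involved.
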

\begin{proof}
It is a direct consequence of the definition of $\Gamma(I)$, for every ideal $I$ of $S$, and \cref{prop:cdot-circ}-\ref{ref_puntino2} and \ref{ref_puntino3}.
\end{proof}

\medskip 

\begin{defin}
Let $(S,+ , \circ)$ be a dual weak brace. An ascending series
\begin{align*}
    \E(S) = I_0\subseteq I_1\subseteq \cdots \subseteq  I_k=S
\end{align*}
of ideals of $S$ is called an \emph{annihilator series} of $S$ if $I_{j+1}/I_j\subseteq  \Ann\left(S/I_j\right)$, for $0 \leq j \leq k$. 
\end{defin}

\medskip

\begin{theor}\label{teo_nilp}
    Let $(S,+,\circ)$ be a dual weak brace. If $\E(S) = I_0\subseteq I_1\subseteq \cdots \subseteq I_k=S$ is an annihilator series of $S$, then
    \begin{equation*}
      \Gamma_{k-j}(S) \subseteq 
      I_j\subseteq \Ann_j(S),
    \end{equation*}
    for $0\leq j\leq k$. \end{theor}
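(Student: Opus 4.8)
The plan is to prove the two inclusions $\Gamma_{k-j}(S)\subseteq I_j$ and $I_j\subseteq \Ann_j(S)$ separately, each by a short induction on $j$ that exploits the defining property of an annihilator series, namely $I_{j+1}/I_j\subseteq \Ann(S/I_j)$.

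For the inclusion $I_j\subseteq \Ann_j(S)$, I would induct on $j$. The base case $j=0$ is immediate since $I_0=\E(S)=\Ann_0(S)$. Assuming $I_j\subseteq \Ann_j(S)$, I want $I_{j+1}\subseteq \Ann_{j+1}(S)$. By hypothesis $I_{j+1}/I_j\subseteq \Ann(S/I_j)$; using \cref{rem_annS_puntino}, this says that for $a\in I_{j+1}$ and any $b\in S$ we have $a\cdot b$, $b\cdot a$, $[a,b]_+$ all lying in $I_j$. By the inductive hypothesis these elements lie in $\Ann_j(S)$, and so by the explicit description of $\Ann_{j+1}(S)$ recalled just after its definition, $a\in \Ann_{j+1}(S)$. (One should be slightly careful about passing the ``$\cdot$'' and ``$[\,,\,]_+$'' conditions through the quotient map $S\to S/I_j$; this is routine using that $\chi$ is a dual weak brace homomorphism and that $\chi(a)\cdot\chi(b)=\chi(a\cdot b)$, and similarly for $[\,,\,]_+$, together with the fact that $\ker\chi$ is exactly the $\sim_{I_j}$-class of idempotents.)

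For the inclusion $\Gamma_{k-j}(S)\subseteq I_j$, I would induct downward, i.e., on $k-j$, or equivalently set $i:=k-j$ and induct on $i$ from $0$ up to $k$. When $i=0$, i.e. $j=k$, we need $\Gamma_0(S)=S\subseteq I_k=S$, which holds. For the inductive step, suppose $\Gamma_{k-j-1}(S)\subseteq I_{j+1}$; I must show $\Gamma_{k-j}(S)\subseteq I_j$. Since $\Gamma_{k-j}(S)=\Gamma(\Gamma_{k-j-1}(S))$ and $\Gamma$ is monotone in its argument — which follows directly from its definition as the additive inverse subsemigroup generated by products and commutators — we get $\Gamma_{k-j}(S)\subseteq \Gamma(I_{j+1})$. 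Now apply \cref{lemma_MN} with $N=I_{j+1}$ and $M=I_j$: the annihilator-series condition $I_{j+1}/I_j\subseteq \Ann(S/I_j)$ is exactly hypothesis (i) of that lemma, so its conclusion (ii) gives $\Gamma(I_{j+1})\subseteq I_j$. Chaining the inclusions yields $\Gamma_{k-j}(S)\subseteq I_j$, completing the induction.

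The main obstacle I anticipate is purely bookkeeping rather than conceptual: making sure the monotonicity of $\Gamma$ is genuinely available (it is, straight from the generating-set definition, but it is worth stating) and that \cref{lemma_MN} applies verbatim — its statement requires $M\subseteq N$, which holds here since the $I_j$ form an ascending chain, and requires both to be ideals, which is given. A secondary subtlety is the index range: the displayed conclusion is asserted for $0\le j\le k$, and at the endpoints one should check $\Gamma_{k}(S)\subseteq I_0=\E(S)$ (from the $i=k$ step) and $I_k=S\subseteq \Ann_k(S)$ (from the $j=k$ step of the second induction), both of which come out of the inductions above. No genuinely new computation is needed beyond what \cref{lemma_MN}, \cref{rem_annS_puntino}, and the explicit descriptions of $\Ann_j(S)$ already supply.
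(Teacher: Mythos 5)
Your proposal is correct. The first inclusion, $\Gamma_{k-j}(S)\subseteq I_j$, is argued exactly as in the paper: induct on $i=k-j$, use monotonicity of $\Gamma$ (which the paper uses tacitly and you rightly flag as worth stating) to get $\Gamma_{i+1}(S)\subseteq\Gamma(I_{k-i})$, and then apply \cref{lemma_MN} in the direction (i)$\Rightarrow$(ii) to conclude $\Gamma(I_{k-i})\subseteq I_{k-i-1}$. For the second inclusion, however, you take a genuinely different and shorter route. The paper's induction on $j$ passes through the auxiliary ideal $I_j+\Ann_{j-1}(S)$: it computes $\Gamma\left(I_j+\Ann_{j-1}(S)\right)=\Gamma(I_j)+\Gamma\left(\Ann_{j-1}(S)\right)\subseteq\Ann_{j-1}(S)$ using \cref{lemma_GAMMA_somma} together with the inductive hypothesis, and then invokes the converse direction (ii)$\Rightarrow$(i) of \cref{lemma_MN} to land in $\Ann_j(S)/\Ann_{j-1}(S)$. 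You instead verify the membership $a\in\Ann_{j+1}(S)$ element-wise: the annihilator-series condition (equivalently, $\Gamma(I_{j+1})\subseteq I_j$ via \cref{lemma_MN}) gives $a\cdot b,\,b\cdot a,\,[a,b]_+\in I_j$ for all $b\in S$, the inductive hypothesis pushes these into $\Ann_j(S)$, and the explicit description of $\Ann_{j+1}(S)$ stated after its definition finishes the step. Your argument avoids \cref{lemma_GAMMA_somma} entirely and uses only one direction of \cref{lemma_MN}; what it costs is a small amount of bookkeeping (which you correctly identify) to check that conditions in $S/I_j$ pull back to membership in $I_j$, i.e., that $I_j$ is saturated for $\sim_{I_j}$ — a routine verification in a Clifford semigroup since $a\sim_{I_j}b$ with $b\in I_j$ forces $a=b+(-b+a)\in I_j$. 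The paper's detour, by contrast, keeps everything at the level of ideals and the operator $\Gamma$, which is more uniform but longer. Both are valid; yours is arguably the cleaner proof of the second inclusion.
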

\begin{proof} 
    First, we set $i:=k-j$ and show that $\Gamma_i(S)\subseteq I_{k-i}$, by proceeding by induction on $i$. If $i=0$, then $\Gamma_0(S)=S=I_k$. Now, let $i\in \mathbb{N}$ and assume that $\Gamma_i(S)\subseteq I_{k-i}$. Thus, by \cref{lemma_MN},
    \begin{equation*}
        \Gamma_{i+1}(S)=\Gamma(\Gamma_i(S))\subseteq \Gamma(I_{k-i})\subseteq I_{k-(i+1)},
    \end{equation*}
which completes the first part of the proof. \\
Now, we prove that $I_j \subseteq \Ann_j(S)$ by induction on $j$. If $j=0$, then $I_0=\E(S)=\Ann_0(S)$. Now, let $j \in \mathbb{N}$ and assume that $I_j \subseteq \Ann_j(S)$. By \cref{lemma_GAMMA_somma} and \cref{lemma_MN}, we have that
\begin{align*}
    \Gamma\left(I_j+\Ann_{j-1}(S)\right)=\Gamma(I_j)+\Gamma\left(\Ann_{j-1}(S)\right) \subseteq I_{j-1}+\Ann_{j-1}(S) \subseteq \Ann_{j-1}(S),
\end{align*}
where the last inclusion follows by the inductive hypothesis. Thus, since by \cref{lemma_MN}
\begin{align*}
   \left( I_j+\Ann_{j-1}(S)\right)/ \Ann_{j-1}(S) \subseteq \Ann\left(S/\Ann_{j-1}(S)\right)=\Ann_j(S)/\Ann_{j-1}(S),
\end{align*}
 we obtain that $I_j+ \Ann_{j-1}(S) \subseteq \Ann_{j}(S)$. Consequently, $I_j \subseteq \Ann_{j}(S)$ and the claim follows.
\end{proof}

\medskip In light of \cref{teo_nilp} we can give the following definitions.

\begin{defin}Let $(S, +, \circ)$ be a dual weak brace. The smallest non-negative integer $c$ such that $\Gamma_{c+1}(S)=\E(S)$ and $\Ann_c(S)=S$
is called \emph{nilpotency index} (or \emph{annihilator nilpotency index}) of $S$.
The series 
$$
       \E(S) = \Ann_0(S)\subseteq \Ann_1(S)\subseteq\cdots\subseteq \Ann_c(S) = S
$$ is called \emph{upper annihilator series} of $S$.  The series 
$$
       \E(S) = \Gamma_{c+1}(S) \subseteq \Gamma_c(S)\subseteq\cdots\subseteq \Gamma_0(S) = S
$$ 
is named \emph{lower annihilator series} of $S$.
\end{defin}

\medskip

Similar to \cref{lemma.sock} and \cref{pro_right_balpha}, one can prove the following result. 
\begin{prop}
    Let $S=[Y; B_\alpha;\phii{\alpha}{\beta}]$ be an annihilator nilpotent dual weak brace of nilpotency index $n$. Then, for every $\alpha\in Y$ each skew brace $B_\alpha$ is annihilator nilpotent of index less or equal to $n$.
     \\
Vice versa, if each skew brace $B_\alpha$ is annihilator nilpotent of index $k_\alpha$, then $S$ is annihilator nilpotent of index equal to the maximum between $k_\alpha$, for every $\alpha \in Y$.
 \end{prop}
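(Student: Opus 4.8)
The plan is to mirror the two-step structure of \cref{lemma.sock} and \cref{pro_right_balpha}: first establish an inclusion tying $\Ann_k(S)$ to the $\Ann_k(B_\alpha)$, deduce the forward implication immediately, and then prove the converse by an induction that transports annihilator-membership from the blocks $B_\delta$ up to $S$. Throughout I would use the description $\Ann_k(S)=\{a\mid\forall b\in S,\ a\cdot b,\ b\cdot a,\ [a,b]_+\in\Ann_{k-1}(S)\}$ recorded after the definition of $\Ann_k$, and the analogous one in each skew brace $B_\alpha$.

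First I would prove, by induction on $k$, that $B_\alpha\cap\Ann_k(S)\subseteq\Ann_k(B_\alpha)$ for all $\alpha\in Y$ and $k\in\mathbb{N}$. For $k=1$ this follows at once from the $\cdot$-form of the annihilator in \cref{rem_annS_puntino}: if $a\in B_\alpha\cap\Ann(S)$, then $a\cdot b=b\cdot a=[a,b]_+=a^0+b^0=a^0$ for every $b\in B_\alpha$, so $a\in\Ann(B_\alpha)$. For the inductive step one uses that $B_\alpha$ is closed under $\cdot$ and $[\,\cdot\,,\cdot\,]_+$: if $a\in B_\alpha\cap\Ann_{k+1}(S)$, then for $b\in B_\alpha$ the three elements $a\cdot b$, $b\cdot a$, $[a,b]_+$ belong to $B_\alpha\cap\Ann_k(S)\subseteq\Ann_k(B_\alpha)$ by the inductive hypothesis, so $a\in\Ann_{k+1}(B_\alpha)$. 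The forward part of the proposition is then immediate: if $\Ann_n(S)=S$, then $B_\alpha=B_\alpha\cap\Ann_n(S)\subseteq\Ann_n(B_\alpha)\subseteq B_\alpha$, whence $\Ann_n(B_\alpha)=B_\alpha$ and each $B_\alpha$ is annihilator nilpotent of index at most $n$.

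For the converse, put $k:=\max\{k_\alpha\mid\alpha\in Y\}$, so that $\Ann_k(B_\delta)=B_\delta$ for every $\delta\in Y$. The heart of the argument is the claim $P(j)$: for every $\alpha\in Y$ and every $a\in B_\alpha$ such that $\phii{\alpha}{\delta}(a)\in\Ann_j(B_\delta)$ for all $\delta\leq\alpha$, one has $a\in\Ann_j(S)$. I would prove $P(j)$ by induction on $j$. The case $j=0$ is trivial, since the hypothesis applied with $\delta=\alpha$ gives $a=\phii{\alpha}{\alpha}(a)=a^0\in\E(S)=\Ann_0(S)$. For the step, let $a$ be as in $P(j+1)$ and let $b\in B_\gamma$; writing $\delta:=\alpha\gamma$, one has $a\cdot b=\phii{\alpha}{\delta}(a)\cdot\phii{\gamma}{\delta}(b)\in B_\delta$, and for every $\epsilon\leq\delta$ the homomorphism property of the $\phii{\alpha}{\beta}$ yields $\phii{\delta}{\epsilon}(a\cdot b)=\phii{\alpha}{\epsilon}(a)\cdot\phii{\gamma}{\epsilon}(b)$, which lies in $\Ann_j(B_\epsilon)$ because $\phii{\alpha}{\epsilon}(a)\in\Ann_{j+1}(B_\epsilon)$; by $P(j)$ it follows that $a\cdot b\in\Ann_j(S)$, and the same reasoning gives $b\cdot a,\ [a,b]_+\in\Ann_j(S)$, so $a\in\Ann_{j+1}(S)$. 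Applying $P(k)$, whose hypothesis is now vacuous since $\phii{\alpha}{\delta}(a)\in B_\delta=\Ann_k(B_\delta)$, gives $B_\alpha\subseteq\Ann_k(S)$ for every $\alpha$, hence $\Ann_k(S)=S$. Together with the forward part, which forces the index of $S$ to be at least each $k_\alpha$, this shows that the index of $S$ equals $k=\max_\alpha k_\alpha$.

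The step I expect to be the main obstacle is identifying the correct inductive statement $P(j)$. The naive guess that $\Ann_j(S)$ is the strong semilattice $Y$ of the $\Ann_j(B_\alpha)$ fails already at $j=1$ --- just as for the socle in \cref{soc-ideale} --- because a skew brace homomorphism need not send an annihilator into an annihilator; what does go through is the stronger ``preimage'' condition $\phii{\alpha}{\delta}(a)\in\Ann_j(B_\delta)$ for all $\delta\leq\alpha$, which bakes in the constraints coming from every lower block and is stable under the structure maps $\phii{\alpha}{\beta}$. Once this is isolated, the induction reduces to the homomorphism property of the $\phii{\alpha}{\beta}$ and the $\cdot$-description of iterated annihilators; the auxiliary facts needed (closure of $B_\alpha$ under $\cdot$ and $[\,\cdot\,,\cdot\,]_+$, and --- should one prefer to recast the converse through an annihilator series and \cref{teo_nilp} --- that the disjoint union over $\alpha$ of $\bigcap_{\delta\leq\alpha}\phii{\alpha}{\delta}^{-1}(\Ann_j(B_\delta))$ is an ideal of $S$, via \cref{teo_descr_ideali} and \cref{teo_omo}) are routine.
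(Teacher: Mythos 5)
Your proof is correct and follows the route the paper indicates: the paper states this proposition without proof, remarking only that it is ``similar to'' \cref{lemma.sock} and \cref{pro_right_balpha}, and your forward direction is precisely the annihilator analogue of the induction in \cref{lemma.sock} (with the base case handled via \cref{rem_annS_puntino}). The converse, which the paper leaves entirely implicit, is correctly supplied by your preimage-condition induction $P(j)$; this is exactly the right workaround for the failure of the naive ``strong semilattice of annihilators'' guess that the paper itself points out, and the only hypothesis you use beyond the paper's stated characterization of $\Ann_k$ is the (implicit in the statement) existence of $\max_\alpha k_\alpha$.
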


\bigskip

\bibliographystyle{elsart-num-sort}  
\bibliography{bibliography}

\def\cprime{$'$}
\begin{thebibliography}{10}
\expandafter\ifx\csname url\endcsname\relax
  \def\url#1{\texttt{#1}}\fi
\expandafter\ifx\csname urlprefix\endcsname\relax\def\urlprefix{URL }\fi

\bibitem{BaEsJiPe23x}
A.~Ballester-Bolinches, R.~Esteban-Romero, P.~Jim\'{e}nez-Seral,
  V.~P\'{e}rez-Calabuig, On solubility of skew left braces and solutions of the
  {Y}ang-{B}axter equation, arxiv.org/abs/2304.13475.
\newline\urlprefix\url{https://arxiv.org/abs/2304.13475}

\bibitem{BaGu21}
V.~G. Bardakov, V.~Gubarev, Rota-{B}axter groups, skew left braces, and the
  {Y}ang-{B}axter equation, J. Algebra 596 (2022) 328--351.
\newline\urlprefix\url{https://doi.org/10.1016/j.jalgebra.2021.12.036}

\bibitem{Ba72}
R.~J. Baxter, Partition function of the eight-vertex lattice model, Ann.
  Physics 70 (1972) 193--228.
\newline\urlprefix\url{https://doi.org/10.1016/0003-4916(72)90335-1}

\bibitem{BoJed21x}
M.~Bonatto, P.~Jedlička, Central nilpotency of skew braces, J. Algebra Appl.,
  In press.
\newline\urlprefix\url{https://doi.org/10.1142/S0219498823502559}

\bibitem{BoFaPo23}
D.~Bourn, A.~Facchini, M.~Pompili, Aspects of the category {SKB} of skew
  braces, Comm. Algebra 51~(5) (2023) 2129--2143.
\newline\urlprefix\url{https://doi.org/10.1080/00927872.2022.2151609}

\bibitem{CaCeSt22}
F.~Catino, F.~Ced\'{o}, P.~Stefanelli, Nilpotency in left semi-braces, J.
  Algebra 604 (2022) 128--161.
\newline\urlprefix\url{https://doi.org/10.1016/j.jalgebra.2022.04.004}

\bibitem{CCoSt19}
F.~Catino, I.~Colazzo, P.~Stefanelli, Skew left braces with non-trivial
  annihilator, J. Algebra Appl. 18~(2) (2019) 1950033, 23.
\newline\urlprefix\url{https://doi.org/10.1142/S0219498819500336}

\bibitem{CCoSt20x-2}
F.~Catino, I.~Colazzo, P.~Stefanelli, Set-theoretic solutions to the
  {Y}ang--{B}axter equation and generalized semi-braces, Forum Math. 33~(3)
  (2021) 757--772.
\newline\urlprefix\url{https://doi.org/10.1515/forum-2020-0082}

\bibitem{CaMaMiSt22}
F.~Catino, M.~Mazzotta, M.~M. Miccoli, P.~Stefanelli, Set-theoretic solutions
  of the {Y}ang-{B}axter equation associated to weak braces, Semigroup Forum
  104~(2) (2022) 228--255.
\newline\urlprefix\url{https://doi.org/10.1007/s00233-022-10264-8}

\bibitem{CaMaSt23}
F.~Catino, M.~Mazzotta, P.~Stefanelli, Rota–{B}axter operators on {C}lifford
  semigroups and the {Y}ang–{B}axter equation, J. Algebra 622 (2023)
  587--613.
\newline\urlprefix\url{https://doi.org/10.1016/j.jalgebra.2023.02.013}

\bibitem{Ce18}
F.~Ced\'{o}, Left braces: solutions of the {Y}ang-{B}axter equation, Adv. Group
  Theory Appl. 5 (2018) 33--90.
\newline\urlprefix\url{https://doi.org/10.4399/97888255161422}

\bibitem{CeSmVe19}
F.~Ced\'{o}, A.~Smoktunowicz, L.~Vendramin, Skew left braces of nilpotent type,
  Proc. Lond. Math. Soc. (3) 118~(6) (2019) 1367--1392.
\newline\urlprefix\url{https://doi.org/10.1112/plms.12209}

\bibitem{Ci23AGTA}
R.~Civino, V.~Fedele, N.~Gavioli, Conference talk ``{B}i-braces and connections
  to private-key cryptography'', Advances in Group Theory and Applications
  2023, Lecce, 5-9 June 2023.
\newline\urlprefix\url{https://www.advgrouptheory.com/agta2023/program.html#7}

\bibitem{ClPr61}
A.~H. Clifford, G.~B. Preston, The algebraic theory of semigroups. {V}ol. {I},
  Mathematical Surveys, No. 7, American Mathematical Society, Providence, R.I.,
  1961.

\bibitem{Dr92}
V.~G. Drinfel\cprime~d, On some unsolved problems in quantum group theory, in:
  Quantum groups ({L}eningrad, 1990), vol. 1510 of Lecture Notes in Math.,
  Springer, Berlin, 1992, pp. 1--8.
\newline\urlprefix\url{https://doi.org/10.1007/BFb0101175}

\bibitem{ESS99}
P.~Etingof, T.~Schedler, A.~Soloviev, Set-theoretical solutions to the quantum
  {Y}ang-{B}axter equation, Duke Math. J. 100~(2) (1999) 169--209.
\newline\urlprefix\url{http://dx.doi.org/10.1215/S0012-7094-99-10007-X}

\bibitem{GuVe17}
L.~Guarnieri, L.~Vendramin, Skew braces and the {Y}ang-{B}axter equation, Math.
  Comp. 86~(307) (2017) 2519--2534.
\newline\urlprefix\url{https://doi.org/10.1090/mcom/3161}

\bibitem{Ho95}
J.~M. Howie, Fundamentals of semigroup theory, vol.~12 of London Mathematical
  Society Monographs. New Series, The Clarendon Press, Oxford University Press,
  New York, 1995, {O}xford Science Publications.

\bibitem{JeKuVaVe19}
E.~Jespers, {\L}.~Kubat, A.~Van~Antwerpen, L.~Vendramin, Factorizations of skew
  braces, Math. Ann. 375~(3-4) (2019) 1649--1663.
\newline\urlprefix\url{https://doi.org/10.1007/s00208-019-01909-1}

\bibitem{JeVAnVen22}
E.~Jespers, A.~Van~Antwerpen, L.~Vendramin, Nilpotency of skew braces and
  multipermutation solutions of the {Y}ang-{B}axter equation, Comm. Cont.
  Math., Article in press.
\newline\urlprefix\url{https://doi.org/10.1142/S021919972250064X}

\bibitem{KoTr20}
A.~Koch, P.~J. Truman, Opposite skew left braces and applications, J. Algebra
  546 (2020) 218--235.
\newline\urlprefix\url{https://doi.org/10.1016/j.jalgebra.2019.10.033}

\bibitem{KoSmVe21}
A.~Konovalov, A.~Smoktunowicz, L.~Vendramin, On skew braces and their ideals,
  Exp. Math. 30~(1) (2021) 95--104.
\newline\urlprefix\url{https://doi.org/10.1080/10586458.2018.1492476}

\bibitem{La98}
M.~V. Lawson, Inverse semigroups, World Scientific Publishing Co., Inc., River
  Edge, NJ, 1998, {T}he theory of partial symmetries.
\newline\urlprefix\url{https://doi.org/10.1142/9789812816689}

\bibitem{MaRySt23x}
M.~Mazzotta, B.~Rybo{\l}owicz, P.~Stefanelli, Deformed solutions of the
  {Y}ang-{B}axter equation coming from dual weak braces and unital
  near-trusses, arxiv.org/abs/2304.05235.
\newline\urlprefix\url{https://arxiv.org/abs/2304.05235}

\bibitem{Me88}
J.~D.~P. Meldrum, Group theoretic results in {C}lifford semigroups, Acta Sci.
  Math. (Szeged) 52~(1-2) (1988) 3--19.

\bibitem{Ze19}
K.~Nejabati~Zenouz, Skew braces and {H}opf-{G}alois structures of {H}eisenberg
  type, J. Algebra 524 (2019) 187--225.
\newline\urlprefix\url{https://doi.org/10.1016/j.jalgebra.2019.01.012}

\bibitem{Pe84}
M.~Petrich, Inverse semigroups, Pure and Applied Mathematics (New York), John
  Wiley \& Sons, Inc., New York, 1984, a Wiley-Interscience Publication.

\bibitem{PeRe99}
M.~Petrich, N.~R. Reilly, Completely regular semigroups, vol.~23 of Canadian
  Mathematical Society Series of Monographs and Advanced Texts, John Wiley \&
  Sons, Inc., New York, 1999, a Wiley-Interscience Publication.

\bibitem{PuSmZe22}
D.~Pulji\'{c}, A.~Smoktunowicz, K.~Nejabati~Zenouz, Some braces of cardinality
  {$p^4$} and related {H}opf-{G}alois extensions, New York J. Math. 28 (2022)
  494--522.
\newline\urlprefix\url{http://nyjm.albany.edu/j/2022/28-19.html}

\bibitem{RaYa24}
N.~Rathee, M.~K. Yadav, Cohomology, extensions and automorphisms of skew
  braces, J. Pure Appl. Algebra 228~(2) (2024) Paper No. 107462.
\newline\urlprefix\url{https://doi.org/10.1016/j.jpaa.2023.107462}

\bibitem{Ru07}
W.~Rump, Braces, radical rings, and the quantum {Y}ang-{B}axter equation, J.
  Algebra 307~(1) (2007) 153--170.
\newline\urlprefix\url{https://doi.org/10.1016/j.jalgebra.2006.03.040}

\bibitem{Sm18}
A.~Smoktunowicz, On {E}ngel groups, nilpotent groups, rings, braces and the
  {Y}ang-{B}axter equation, Trans. Amer. Math. Soc. 370~(9) (2018) 6535--6564.
\newline\urlprefix\url{https://doi.org/10.1090/tran/7179}

\bibitem{SmVe18}
A.~Smoktunowicz, L.~Vendramin, On skew braces (with an appendix by {N}. {B}yott
  and {L}. {V}endramin), J. Comb. Algebra 2~(1) (2018) 47--86.
\newline\urlprefix\url{https://doi.org/10.4171/JCA/2-1-3}

\bibitem{Ve19}
L.~Vendramin, Problems on skew left braces, Adv. Group Theory Appl. 7 (2019)
  15--37.
\newline\urlprefix\url{https://doi.org/10.32037/agta-2019-003}

\bibitem{Ya67}
C.~N. Yang, Some exact results for the many-body problem in one dimension with
  repulsive delta-function interaction, Phys. Rev. Lett. 19 (1967) 1312--1315.
\newline\urlprefix\url{https://doi.org/10.1103/PhysRevLett.19.1312}

\end{thebibliography}

 \end{document}